\newtheorem{theorem}{Theorem}[section]
\newtheorem{proposition}[theorem]{Proposition}
\newtheorem{lemma}[theorem]{Lemma}
\newtheorem{corollary}[theorem]{Corollary}
\newtheorem{remark}[theorem]{Remark}
\newtheorem{notation}[theorem]{Notation}
\newtheorem{definition}[theorem]{Definition}
\newcommand{\N}{\mathbb{N}}
\newcommand{\Q}{\mathbb{Q}}
\newcommand{\R}{\mathbb{R}}
\newcommand{\eps}{\varepsilon}
\newcommand{\vp}{\varphi}
\newcommand{\WPsi}{{\widetilde{\Psi}}}
\newcommand{\uu}[1]{\underline{#1}}
\newcommand{\id}{\mathrm{id}}
\DeclareMathOperator*{\argmax}{arg\,max}
\providecommand{\dotdiv}{% Don't redefine it if available
  \mathbin{% We want a binary operation
    \vphantom{+}% The same height as a plus or minus
    \text{% Change size in sub/superscripts
      \mathsurround=0pt % To be on the safe side
      \ooalign{% Superimpose the two symbols
        \noalign{\kern-.35ex}% but the dot is raised a bit
        \hidewidth$\smash{\cdot}$\hidewidth\cr % Dot
        \noalign{\kern.35ex}% Backup for vertical alignment
        $-$\cr % Minus
      }%
    }%
  }%
}
\title{The finitary content of sunny nonexpansive retractions}
\author{Ulrich Kohlenbach$^{a}$ and Andrei Sipo\c s${}^{a,b}$\\[2mm]
\footnotesize ${}^a$Department of Mathematics, Technische Universit\"at Darmstadt,\\
\footnotesize Schlossgartenstrasse 7, 64289 Darmstadt, Germany\\[1mm]
\footnotesize ${}^b$Simion Stoilow Institute of Mathematics of the Romanian Academy,\\
\footnotesize Calea Grivi\c tei 21, 010702 Bucharest, Romania \\[2mm]
\footnotesize E-mails: \{kohlenbach,sipos\}@mathematik.tu-darmstadt.de\\
}
\date{}
\begin{document}

\maketitle

\begin{abstract}
We use techniques of proof mining to extract a uniform rate of metastability (in the sense of Tao) for the strong convergence of approximants to fixed points of uniformly continuous 
pseudocontractive mappings in Banach spaces which are uniformly convex and uniformly smooth, i.e. a slightly restricted form of the classical result of Reich. This is made possible by the 
existence of a modulus of uniqueness specific to uniformly convex Banach spaces and by the arithmetization of the use of the limit superior. The metastable convergence can thus be proved 
in a system which has the same provably total functions as first-order arithmetic and therefore one may interpret the resulting proof in G\"odel's system $T$ of higher-type functionals. 
The witness so obtained is then majorized (in the sense of Howard) in order to produce the final bound, which is shown to be definable in the subsystem $T_1$. This piece of information is 
further used to obtain rates of metastability to results which were previously only analyzed from the point of view of proof mining in the context of Hilbert spaces, i.e. 
the convergence of the iterative schemas of Halpern and Bruck.

\noindent {\em Mathematics Subject Classification 2010}: 47H06, 47H09, 47H10, 03F10.

\noindent {\em Keywords:} Proof mining, sunny nonexpansive retractions, metastability, resolvents, pseudocontractions, functional interpretation, Halpern iteration, Bruck iteration, 
uniformly convex Banach spaces, uniformly smooth Banach spaces.
\end{abstract}

\section{Introduction}\label{sec:intro}

Let $(X,\|\cdot \|)$ be a real Banach space, $C\subseteq X$ be a 
nonempty bounded 
closed convex subset and $T:C\to C$ be a nonexpansive mapping. For 
$t\in (0,1)$ and $x\in C,$ let $x_t$ be the unique fixed point of 
the strict contraction 
\[ T_t: C\to C, \ \ T_t(y):=tT(y)+(1-t)x. \]

In 1967, 
Browder \cite{Bro67A} and Halpern \cite{Hal67} independently proved in the 
case where $X$ is a Hilbert space that 
for $t\to 1,$ the path $(x_t)$ strongly converges and its limit is the fixed point of $T$ which is closest to $x,$ i.e. $Px,$ where 
$P:C\to Fix(T)$ is the metric projection onto $Fix(T).$ 
Both proofs for the strong convergence 
do not readily generalize even to the class of $L^p$ spaces (other than $L^2$). 

That the strong 
convergence does hold in this case was  
finally shown in 
1980, when Reich established in the celebrated paper \cite{Rei80} that it actually holds in any uniformly smooth space. Moreover, 
Reich showed that the limit is $Qx,$ where $Q$ is 
the unique sunny nonexpansive retraction 
$Q:C\to Fix(T)$. This result has subsequently been extended in many ways including the context of families of operators 
\cite{AleynerReich05,AleynerReich05a,AleynerReich07}. 

The significance of Reich's theorem is twofold:
\begin{itemize}
\item 
It provides for the first time an algorithmic approach to the 
construction of sunny nonexpansive retractions. This aspect is highlighted 
e.g. in \cite{Benavides,AleynerReich05}.
\item
Many important iterative algorithms in nonlinear analysis are shown 
to be strongly convergent by proving that they asymptotically approach 
$(x_{t_n})$ 
(for some suitable sequence $(t_n)\subseteq (0,1)$ converging to $1$).
\end{itemize}
We start discussing the first item in more detail.
Nonexpansive retractions were first considered by Bruck in \cite{Bru70}, 
who showed -- using Zorn's lemma -- that $Fix(T)$ is a nonexpansive retract 
of $C,$ whenever $X$ is a real reflexive strictly convex Banach space. 
This result was generalized further in \cite{Bru73}, in particular, 
to reflexive Banach spaces which have the conditional fixed point property for 
nonexpansive mappings which e.g. includes all uniformly smooth spaces.
Since metric projections onto closed convex subsets are nonexpansive only 
in Hilbert spaces, nonexpansive retractions are, already for $L^p$ spaces (again, other than $L^2$), very different from 
metric projections and may not 
even exist although the metric projection does. For example, Bruck showed in \cite{Bru74}
that no real Banach space $X$ with $\dim X\ge 3$ has a 
bounded smooth subset 
$E\subset X$ with nonempty interior which 
is the range of a nonexpansive retraction 
$Q:X\to E$ unless $X$ is a Hilbert space.

Retractions $Q:C\to E\subseteq C$ are 
called {\it sunny} if the property 
\[  \forall x \in C\,\forall t \geq 0 \ (Qx+t(x-Qx)\in C\to 
Q(Qx+t(x-Qx))=Qx) \] 
holds. In smooth Banach spaces, for a retraction $Q$ to be nonexpansive 
and sunny it is necessary and sufficient for the variational inequality  (where $j$ denotes the single-valued normalized duality map) 
\[  \forall x\in C\,\forall y \in E \ (\langle x-Qx,j(y-Qx)\rangle \leq 0)\] 
to hold, which in Hilbert spaces characterizes the metric projection. Thus, the relevance of sunny nonexpansive retractions is that they are in many respects  the right substitute for the metric projection outside Hilbert spaces.
From this characterization it follows that there is at most one 
sunny nonexpansive retraction $Q:C\to E$ in smooth spaces 
(in \cite{Bru73A}, Bruck used the term `nonexpansive projection' instead 
of the nowadays common name `sunny nonexpansive retraction'). 
If $X$ is even 
uniformly smooth and strictly convex and $E=Fix(T)$ is the fixed point 
set of a nonexpansive mapping $T:C\to C,$ then the 
unique sunny nonexpansive retraction $Q:C\to Fix(T)$ necessarily exists  \cite{Bru73A}.
 Bruck's proof is, however, highly nonconstructive. Reich's theorem establishes 
that the sunny nonexpansive retraction can be obtained as the limit of 
objects $x_t$ which are constructively available (since Banach's fixed 
point theorem is constructive). Our logical analysis of the proofs due to Morales
of Reich's theorem implies that the pointwise existence of sunny 
nonexpansive retracts can be carried out in a logically fairly weak 
formal system (see Remark \ref{rem.6.5}) which is of foundational interest.

As stated in the second item above, 
the great relevance of Reich's theorem 
for algorithmic purposes can also be seen from the fact that 
it implies the strong convergence of important iterative algorithms: in \cite{Hal67}, 
the so-called Halpern iteration (starting from some $x_0\in C$ and using 
$u\in C$ as anchor)
\[ x_{n+1}:=\lambda_{n+1} u+(1-\lambda_{n+1})Tx_n \]
is considered for $(\lambda_n)\subset [0,1]$ and -- in Hilbert spaces -- 
shown to converge to $Pu$ for the metric projection $P:C\to Fix(T)$ 
under very restrictive conditions on $(\lambda_n).$ In a milestone
paper \cite{Witt92}, Wittmann generalized this to much more general sequences 
$(\lambda_n)$ including for the first time the case $\lambda_n:=1/(n+1).$ 
If $T$ is linear and $u=x_0$, then $x_n$ coincides (for this choice of 
$(\lambda_n)$) with the ergodic average 
$\frac{1}{n+1} \sum^n_{i=0} T^ix_0$ and so Wittmann's theorem is a nonlinear generalization of the classical von Neumann mean ergodic theorem, while remaining strongly convergent (without 
linearity, the usual ergodic averages are known to converge only weakly by 
results due to \cite{Bai75} and \cite{GenLin}). 
In \cite{ShiTak97}, Wittmann's theorem 
is generalized to uniformly smooth Banach spaces by reducing the strong 
convergence of $(x_n)$ to that of $(x_t)$ and then applying Reich's theorem 
(in fact, \cite{ShiTak97} considers a somewhat larger class of spaces). 
For Halpern's more restrictive sequences $(\lambda_n),$ this had already been shown 
in \cite{Rei80}.

Reich \cite{Rei80} established his theorem not only for nonexpansive mappings 
but even for set-valued accretive operators satisfying the range 
condition which, in particular, covers 
the important class of continuous pseudocontractions, introduced by Browder \cite{Bro67}, 
which extend the class of nonexpansive mappings and which play a crucial role
in the abstract formulation of Cauchy problems. For pseudocontractions one can no longer 
 use the Halpern iterative schema but has to apply a more complicated schema due 
to Bruck \cite{Bru74A}
\[
 x_{n+1}:=\left(1-\lambda_n\right)x_n+\lambda_n Tx_n-
\lambda_n\theta_n
\left(x_n-x_1\right) \] 
for suitable sequences $(\lambda_n)$, $(\theta_n)$ in $[0,1].$ 
In \cite{Chi}, it is shown that for Lipschitzian pseudocontractions 
(a class which still strictly generalizes the class of nonexpansive 
mappings and which contains the class of strict pseudocontractions due to 
\cite{BroPet67}) the strong convergence of the Bruck iteration schema can be 
shown using the strong convergence of $(x_t),$ i.e. again by reduction to Reich's theorem.

Furthermore, recently, in \cite{Aoyama}, a Halpern-type variant of the 
famous proximal point algorithm was shown to strongly converge 
by a similar reduction.

These and many other results point to the paramount significance of this result of Reich.
In this paper, we give for the first time  a quantitative account of it.
From results of Neumann \cite{Neu15} on the Halpern 
iteration and the aforementioned connection with the convergence of 
$(x_t)$ (which was treated quantitatively in \cite{KohLeu12}), it follows 
that even for the case of Hilbert spaces, in fact, already for $X:=\R$ and 
$C:=[0,1],$ there are simple computable mappings $T:C\to C$ for which 
$(x_n):=(x_{1-\frac{1}{n+1}})$ with the anchor point $x:=0$ does not have a computable rate 
of convergence. In this situation, the next best thing one can hope for 
is an effective so-called rate of metastability -- in the sense of Terence Tao 
\cite{Tao08,Tao08A}, the name having been suggested to him by Jennifer Chayes --
i.e. a function $\Theta:\N\times \N^{\N}\to \N$ such that 
\[ (*) \ \forall k\in\N\,\forall g\in\N^{\N} \,\exists N\le 
\Theta(k,g)\,
\forall n,m\in [N,N+g(N)] \ \left(\| x_n-x_m\| <\frac{1}{k+1}\right), \] 
where $[N,N+g(N)]:=\{ N,N+1,N+2,\ldots, N+g(N)\},$ whose complexity 
reflects the 
computational content of the original convergence proof from which it is 
extractable by proof-theoretic methods (see \cite{Koh08}). 
Note that $(*)$ provides a quantitative form of 
\[   \forall k\in\N \,\forall g\in\N^{\N} \,\exists N\in \N \,
\forall n,m\in [N,N+g(N)] \ \left(\| x_n-x_m\| <\frac{1}{k+1}\right), \] which, 
noneffectively, is equivalent to the ordinary Cauchy property of 
$(x_n).$ In proof theory, the metastable version of the original 
Cauchy statement is known as the Kreisel no-counterexample 
interpretation \cite{Kreisel(51),Kreisel(52)}. General  
so-called logical metatheorems due to 
\cite{Koh05,GerKoh08,Koh08,Engracia(diss),KohLeu12,GunKoh16} 
guarantee the extractability of explicit effective bounds, in particular of 
rates of metastability, for large classes of proofs and provide 
algorithms for their actual extraction from a given proof based on modern 
variants and extensions of G\"odel's \cite{Goe58} famous functional 
(`Dialectica') interpretation. Moreover, these bounds 
only depend on $X$, $C$ and $T$ via `majorizing' data (such as moduli of 
smoothness on $X$ or of uniform continuity of $T$ and norm bounds on the 
elements of $C$). These developments are all part of the research program of `proof mining', that aims to apply these logical techniques to proofs in a broad range of areas of mainstream mathematics, such as nonlinear analysis, convex optimization, commutative algebra, ergodic theory or topological dynamics; the standard introduction to the field is \cite{Koh08}, while more recent surveys are \cite{Koh17,Koh18}. 

For the Hilbert space case of the problem at hand, such $\Theta$'s of low primitive recursive complexity have already been extracted both for the Browder-Halpern theorem and for Wittmann's theorem in \cite{Koh11}, and an alternative way of using proof mining to derive these and related results was recently explored in \cite{FerLeuPin}. 

However, a quantitative analysis of Reich's generalization to Banach spaces had been a major challenge of the `proof mining' paradigm for about ten years. The present paper, which for the first time succeeds in achieving such an analysis, is the technically most complex extraction of a metastability bound for a strong convergence theorem in 
analysis which has ever been carried out. The enormous complexity of the final bound reflects the profound 
combinatorial and computational content of Reich's deep theorem.

More specifically, in the present paper, we extract for the first time a rate of metastability 
for  the convergence of $(x_t)$ for uniformly continuous 
pseudocontractions within the class of Banach spaces which are 
uniformly smooth and uniformly convex (which covers all $L^p$ spaces for 
$1<p<\infty$). Using quantitative results extracted 
already in \cite{KohLeu12}, this also gives the first 
explicit rate of metastability 
for the extension (due to \cite{ShiTak97}) of Wittmann's theorem to this 
class of spaces as well as, using quantitative results from \cite{KorKoh14}, 
the first rate of metastability for Bruck's iteration for this class. All 
previous results only considered the class of Hilbert spaces (or geodesic 
generalizations of Hilbert spaces such as CAT(0) spaces \cite{KohLeu12A} or 
CAT($\kappa$) spaces for $\kappa\geq0$ \cite{LeuNic16}). As predicted by general 
logical metatheorems from \cite{Koh08,KohLeu12}, the rate of metastability 
(in the case where $t_n:=1-\frac{1}{n+1}$) 
only depends (in addition to $\varepsilon$ and $g$) on a norm bound $b$ on the 
elements in $C$, on moduli $\tau$, $\eta$ of uniform smoothness and convexity, respectively,
of $X$ and on a modulus $\theta$ of uniform continuity of $T$.

Our extraction of $\Theta$ analyzes the proof of Reich's theorem given in 1990 by Morales 
\cite{Mor90}. This proof uses that the continuous convex function
\[ F(z):=\limsup_{n\to\infty} \| x_{t_n}-z\|^2, \] 
where $(t_n)$ is a sequence in $(0,1)$ which converges to $1,$ attains its 
infimum on the closed convex bounded set $C$ since $X$ is reflexive, being uniformly smooth. (Reich's original proof \cite{Rei80} produced the operator $F$ as the limit of a subsequence, which was shown to be well-defined in \cite{Rei80A}; later developments of the idea, even to this day, generally use a simplification of this by applying a Banach limit to the original sequence -- see, e.g., \cite{BruRei81,BruKirRei82,TakUed84}; to our knowledge, the above definition -- lifted from the theory of asymptotic centers \cite{Ede72} -- was first used by Morales in this context and afterwards picked up by few other authors.)
The proof then continues by forming the set of all points on which $F$ attains 
its infimum, showing that this set is invariant under the action of (the resolvent of) $T$ and thus (the resolvent, and therefore) $T$ 
has a fixed point in this set. (The detour via the resolvent is not needed for nonexpansive mappings.) In the 
deductive framework to which the known proof-theoretic bound extraction 
methods apply, it is not clear how to define $F$ as an object given as we 
do not have a term which assigns to a bounded sequence of reals its 
$\limsup$ (in technical terms this is due to the fact the functional 
interpretation of having such a term has no solution by majorizable 
functionals; only if $X$ would be assumed as separable -- which we have to 
avoid, however, 
for general reasons discussed in \cite{Koh08} as this prevents the extraction 
of uniform bounds -- then using the continuity of $F$ it would be enough  
to define $F$ on a dense sequence and this could be done in our setting). 
So we aim to replace the use of $F$ as an object by 
\[ (**)\ 
\forall z\in C\,\exists a\in \R^+\, (a= \limsup_{n\to\infty} \| x_{t_n}-z\|^2), \]
where `$a=\limsup_{n\to\infty} \| x_{t_n}-z\|^2$' is logically complex, namely it is a 
so-called $\Pi^0_3$ statement.

This makes it difficult to formalize the above arguments in a setting 
which only allows one to use $(**).$  
That is why we add the additional assumption that $X$ is a {\bf uniformly 
convex} Banach space which yields that $F$ is a uniformly convex function.
This is usually used to prove that asymptotic centers are unique in this class of spaces, and
we show that one can construct (by way of Proposition~\ref{l3-a}) a modulus of uniqueness for the infimum 
problem stating that -- given $\varepsilon>0$ -- there is a $\delta>0$ such that 
$\delta$-approximate infima points are $\varepsilon$-close to each other (for more details, see e.g. its use in Claims 2 and 3 of the proof in Section~\ref{section-limsup}). 
It is then sufficient to consider only $\delta$-infima points instead 
of actual infima points. The resulting proof can then be shown to be formalizable 
with the use of arithmetical comprehension which already guarantees the 
extractability of a rate of metastability which is definable in the 
calculus $T+B_{0,1},$ where $T$ is the system of the Hilbert-G\"odel \cite{Hil26,Goe58} 
primitive recursive 
functionals of finite type and $B_{0,1}$ is the schema of Spector's 
\cite{Spec62} bar 
recursion (of lowest type). We then show that the use of real limsup's 
can be replaced -- using a process of arithmetization, see 
\cite{Koh97A} and Remark~\ref{rem-ar} -- by that of $\varepsilon$-limsup's whose existence can 
be shown using just induction (more precisely, using $\Pi^0_2$-induction, to which it is equivalent and which -- by Parsons \cite{Par72} -- has a solution in 
the fragment $T_1$ of $T$). 
 
Since the existence of $\delta$-infima of $F$ also requires only induction, 
it follows from this that one gets a rate of metastability which is 
primitive recursive in the extended sense of $T.$ The analysis 
of the $\delta$-infima argument shows that $T_2$ suffices. When the details 
of the extraction are all carried out, it turns out that for the particular 
instances of that argument used, actually $T_1$ suffices, which, therefore, 
is the complexity of our final bound. The statement $(*)$ with this explicit 
bound provides a finitary version (in the sense of Tao) of the theorem that 
$(x_t)$ converges to the sunny nonexpansive retraction $Qx$ of $x$ 
(and so, in particular, also of the existence of $Q$ itself) 
since the latter can 
be derived from $(*)$ by an elementary proof. In particular, it follows 
that only a single instance of $\Pi^0_1$-comprehension is needed (or, as seen from the viewpoint of constructive mathematics and in the presence of 
$\Pi^0_1$-AC$^{0,0}$ choice for numbers, only the $\Sigma^0_1$ law of excluded 
middle) to derive the theorem. We believe that our analysis exhibits the 
explicit numerical content of the existence proof for $Q.$ Only future research
will show whether the complexity class $T_1$ is the best possible or whether an 
ordinary primitive recursive rate $\Theta\in T_0$ can be achieved (or even 
whether a close examination of our bound might show that it can already be 
defined in $T_0$, see Remark \ref{complexity}).

The next section introduces the preliminary notions used to discuss and prove our result, namely on uniformly convex and uniformly smooth spaces, and on nonexpansive retractions and pseudocontractions. Highlights include the modulus of convexity for the squared norm of a uniformly convex space -- which has as an immediate consequence the uniform convexity of the function $F$ discussed above -- as well as the introduction of the resolvent $g_T$ of a continuous pseudocontraction $T$ that allows one to use nonexpansiveness arguments as needed. Section~\ref{section-limsup} details the way to an intermediate proof of the main result where the use of $F$ as an operator has been eliminated and only $\eps$-infima of it are needed, which are made useful by means of the modulus of uniqueness. In Section~\ref{section-approx} even this use of $F$ in the form of pointwise limsup's is removed, as they are replaced with approximate limsup's. Some care must be taken to ensure that approximate limsup's may be shown to exist using just induction (Proposition~\ref{exlimsup}) and that they are useful for our purposes (Lemma~\ref{lpe}). Finally, in Section~\ref{sec:proof-mining}, the higher-order portions of the witness extraction are carried out, yielding a highly complex, though structured, realizer. In Section~\ref{sec:metastab} this realizer is progressively majorized in order to obtain our goal, namely a rate of metastability. It is argued there both that this final bound $\Theta$ is expressible in $T_1$ and that the metastability statement is a true finitization (again in the sense of T. Tao) of the full form of the original strong convergence statement. Playing the role of an epilogue, Section~\ref{sec:apps} presents two completions by means of our result of proof mining analyses which had only been carried partially 
(in the sense that a rate of metastability was produced assuming such a 
rate for $(x_{t_n})$ be given which so far was known only in the Hilbert 
space case), 
namely the strong convergence of the iterations of B. Halpern and R. E. Bruck.

\section{Preliminaries}\label{sec:prelim}

\subsection{Classes of Banach spaces}

\subsubsection{Uniformly convex spaces}

\begin{definition}[{cf. \cite{Cla36,Day44}}]
Let $X$ be a Banach space. We call the function $\delta_X : (0,2] \to \R$, defined, for all $\eps \in (0,2]$, by:
$$\delta_X(\eps):= \inf \left\{ 1- \left\| \frac{x+y}2 \right\| \bigm| \|x\|=\|y\|=1, \|x-y\| \geq \eps \right\}$$
``the'' {\bf modulus of convexity} of $X$.
\end{definition}

The following result shows that this modulus can be obtained in a less strict way.

\begin{proposition}[{\cite[p. 60]{LinTza79}}]
Let $X$ be a Banach space. Then, for all $\eps \in (0,2]$,
\begin{align*}
\delta_X(\eps)&=  \inf \left\{ 1- \left\| \frac{x+y}2 \right\| \bigm| \|x\|\leq 1, \|y\|\leq 1, \|x-y\| \geq \eps \right\}.
\end{align*}
\end{proposition}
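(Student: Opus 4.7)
One direction of the equality is immediate by set inclusion: since the unit sphere $\{z\in X : \|z\|=1\}$ is contained in the closed unit ball $\{z\in X: \|z\|\leq 1\}$, the constraint set on the right of the claimed equality contains that of $\delta_X(\eps)$, and the infimum of $(x,y)\mapsto 1-\|(x+y)/2\|$ can only decrease when taken over the larger set. So it remains to prove the reverse inequality: for every $(x,y)$ with $\|x\|,\|y\|\leq 1$ and $\|x-y\|\geq \eps$, one has $1-\|(x+y)/2\|\geq \delta_X(\eps)$.

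My plan is to associate to each such pair $(x,y)$ a pair $(u,v)$ on the unit sphere satisfying both $\|u-v\|\geq \|x-y\|\geq \eps$ and $\|(u+v)/2\|\geq \|(x+y)/2\|$. Such $(u,v)$ is then admissible for the infimum defining $\delta_X(\eps)$, so $\delta_X(\eps)\leq 1-\|(u+v)/2\|\leq 1-\|(x+y)/2\|$, as desired. I would construct $(u,v)$ as the two endpoints of a chord of the unit ball which is symmetric about the midpoint $m:=(x+y)/2$, i.e., of the form $u=m+sw$, $v=m-sw$ for a unit vector $w$ and scalar $s>0$ with $\|m+sw\|=\|m-sw\|=1$. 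This automatically gives $(u+v)/2=m$, establishing the midpoint-norm inequality with equality. (In the edge case $\|m\|=1$, convexity of the norm forces $\|x\|=\|y\|=1$, so one may simply take $u:=x$, $v:=y$.)

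Working in a two-dimensional subspace of $X$ containing $m$, $x$, and $y$, the existence of a suitable direction $w$ follows from an intermediate value argument applied to the continuous odd function $w\mapsto t^+(w)-t^-(w)$ on the connected unit circle of directions, where $t^+(w)>0$ and $t^-(w)>0$ denote the unique positive scalars with $\|m+t^+(w)w\|=1$ and $\|m-t^-(w)w\|=1$ respectively. The main obstacle is then to verify that the resulting chord length $2s=\|u-v\|$ is at least $\|x-y\|$: while the chord through $m$ in direction $(y-x)/\|y-x\|$ already has length at least $\|x-y\|$ (since it passes through both $x$ and $y$), it is generically not symmetric about $m$, and a further continuity argument is needed to track how the chord length through $m$ varies with direction and so extract a symmetric chord of comparable length. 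In the Hilbert space setting this step is rendered trivial by the parallelogram identity.
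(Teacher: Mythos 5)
The paper itself gives no proof of this proposition (it is quoted from Lindenstrauss--Tzafriri), so your proposal has to stand on its own. The easy inequality is fine, and so is the overall reduction: replace a pair $(x,y)$ in the ball by a pair on the sphere whose separation is no smaller and whose midpoint norm is no smaller. Within a two-dimensional section, the existence of \emph{some} chord of the unit ball bisected by $m=(x+y)/2$, obtained from the intermediate value theorem applied to the odd continuous function $w\mapsto t^+(w)-t^-(w)$, is also correct (as is your treatment of the edge case $\|m\|=1$).

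The gap is exactly where you flag it, and it is not a loose end but the entire content of the proposition: nothing in your construction bounds the length $2s$ of the bisected chord from below by $\|x-y\|$. The IVT only produces \emph{a} direction $w_0$ with $t^+(w_0)=t^-(w_0)$; it gives no control on the common value, and the one direction for which you do have the length bound, namely $(y-x)/\|y-x\|$, is generically not the symmetric one. What your plan needs is the assertion that among all chords of $B_X$ bisected by $m$, at least one has length at least $\sup\{\|p-q\| : p,q\in B_X,\ (p+q)/2=m\}$. That is a genuine statement of planar convex geometry, essentially equivalent in difficulty to the proposition itself, and it cannot follow from ``a further continuity argument'': continuity tracks how chord lengths vary but cannot compare the symmetric chord with the given segment without exploiting convexity and the central symmetry of the ball (e.g.\ via supporting functionals at the boundary contact points). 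The published proofs avoid this rigidity by not insisting that the midpoint stay fixed: for instance one first dilates the segment about its midpoint, $x\mapsto x+t(x-m)$, $y\mapsto y+t(y-m)$, until one endpoint reaches the sphere (this preserves $m$ and increases $\|x-y\|$), and then handles the second endpoint by a separate convexity argument in the two-dimensional section. As written, your proposal does not establish the nontrivial inequality.
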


\begin{corollary}
Let $X$ be a Banach space. TFAE:
\begin{enumerate}[(a)]
\item for all $\eps \in (0,2]$, $\delta_X(\eps)>0$.
\item there is an $\eta : (0,2] \to (0,1]$ (called ``a'' {\bf modulus of convexity}) such that for all $\eps \in (0,2]$ and all $x,y \in X$ with $\|x\|\leq 1$, $\|y\| \leq 1$ and $\|x-y\| \geq \eps$ one has that
$$\left\|\frac{x+y}2\right\| \leq 1-\eta(\eps).$$
\end{enumerate}
(One can, obviously, for the implication ``(a) $\Rightarrow$ (b)'', put, for all $\eps$, $\eta(\eps):=\delta_X(\eps)$.) In this case, $X$ is called {\bf uniformly convex}.
\end{corollary}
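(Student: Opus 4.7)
The plan is to derive the equivalence directly from the preceding Proposition, which states that $\delta_X(\eps)$ may equally well be computed as the infimum of $1-\|(x+y)/2\|$ over vectors in the closed unit ball (not just the unit sphere) with $\|x-y\|\geq\eps$. That reformulation is exactly what is needed to match the quantifier form in (b), where $x,y$ are taken in the closed unit ball.

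For the implication (a)$\Rightarrow$(b), I would follow the hint in the statement and simply set $\eta(\eps):=\delta_X(\eps)$. Given any $x,y$ with $\|x\|\leq 1$, $\|y\|\leq 1$ and $\|x-y\|\geq\eps$, the Proposition yields
\[
\delta_X(\eps)\ \leq\ 1-\left\|\frac{x+y}{2}\right\|,
\]
so that $\|(x+y)/2\|\leq 1-\eta(\eps)$, as required. It remains to check that $\eta$ takes values in $(0,1]$: positivity is hypothesis (a), while $\eta(\eps)\leq 1$ follows from the trivial bound $1-\|(x+y)/2\|\leq 1$ applied to any admissible pair (for instance, picking $x,y$ on the unit sphere with $\|x-y\|=\eps\in(0,2]$, which exists in any nontrivial normed space via a two-dimensional affine argument).

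For the converse (b)$\Rightarrow$(a), the existence of $\eta$ immediately forces each quantity $1-\|(x+y)/2\|$ in the defining set of $\delta_X(\eps)$ (using the unit-sphere version, which is a subset of the closed-ball version) to be at least $\eta(\eps)$. Taking the infimum preserves the lower bound, so $\delta_X(\eps)\geq\eta(\eps)>0$.

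There is no real obstacle here; the only point worth being careful about is that the hinted definition $\eta(\eps):=\delta_X(\eps)$ works for \emph{all} admissible $x,y$ in the closed unit ball, not just on the unit sphere, and this is exactly the content of the Proposition invoked above. Without that reformulation one would have to do an extra normalization step to reduce general closed-ball pairs to unit-sphere pairs.
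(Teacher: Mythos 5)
Your proof is correct and matches the route the paper intends: the corollary is stated as an immediate consequence of the preceding Lindenstrauss--Tzafriri proposition, with exactly the choice $\eta(\eps):=\delta_X(\eps)$ for (a)$\Rightarrow$(b) and the trivial infimum bound for (b)$\Rightarrow$(a). The only cosmetic point is that in a one-dimensional space one cannot always pick unit-sphere vectors with $\|x-y\|=\eps$ exactly, but the pair $x=u$, $y=-u$ still witnesses nonemptiness of the defining set, which is all your $\eta(\eps)\leq 1$ check requires.
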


The following is an application of a recent proof mining result of Ba\v{c}\'ak and the first author, specifically \cite[Proposition~3.2]{BacKoh18}, itself a quantitative version of a theorem of Z\u alinescu \cite[Theorem~4.1]{Zal83}. We remark that a similar kind of result (i.e. with a different modulus) may be obtained by adapting an argument from \cite[Section~2]{XuRoa91} to work with $\eta$ instead of $\delta_X$. The non-quantitative version may also be found in the statement of \cite[Theorem~2]{Xu91}, but the proof given there is highly non-constructive.

\begin{proposition}\label{l3-a}
Let $X$ be a uniformly convex Banach space having $\eta$ as a modulus and let $b \geq \frac12$. Put, for all $\eps \in (0,2]$,
$$\psi_{b,\eta}(\eps):= \min \left( \frac{\left(\min\left(\frac\eps2, \frac{\eps^2}{72b}\eta^2\left(\frac\eps{2b}\right)\right)\right)^2}4, \frac{\eps^2}{48}\eta^2\left(\frac{\eps}{2b}\right) \right).$$
Then, for all $\eps \in (0,2]$:
\begin{enumerate}[(a)]
\item $\psi_{b,\eta}(\eps) > 0$.
\item for all $x,y \in X$ with $\|x\|\leq b$, $\|y\| \leq b$, $\|x-y\| \geq \eps$, we have that
$$\left\| \frac{x+y}2 \right\|^2 + \psi_{b,\eta}(\eps) \leq \frac12\|x\|^2 + \frac12\|y\|^2.$$
\end{enumerate}
\end{proposition}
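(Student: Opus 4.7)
Proof plan. Statement (a) is immediate from the general properties of a modulus of convexity: since $b \geq 1/2$ we have $\eps/(2b) \in (0,2]$, hence $\eta(\eps/(2b)) \in (0,1]$ by definition of $\eta$, and so every term occurring inside the outer $\min$ defining $\psi_{b,\eta}(\eps)$ is strictly positive; the minimum of finitely many positive reals is positive.

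For (b), the plan is to reduce directly to the cited quantitative result \cite[Proposition~3.2]{BacKoh18}, which provides, given a modulus of convexity $\eta$ for a Banach space $X$, an explicit modulus for the uniform convexity of the function $\|\cdot\|^{2}$ on each ball. The natural rescaling is $u:=x/(2b)$, $v:=y/(2b)$: then $\|u\|,\|v\|\le 1/2\le 1$, so the hypotheses of the Lindenstrauss--Tzafriri form of the modulus are met, and the assumption $\|x-y\|\ge \eps$ becomes $\|u-v\|\ge \eps/(2b)$, with $\eps/(2b)\in (0,2]$. Applying $\eta$ then yields a bound of the form
$$\left\|\frac{u+v}{2}\right\|\le 1-\eta\!\left(\frac{\eps}{2b}\right).$$
Rescaling back (which multiplies the conclusion of the squared-norm estimate by $(2b)^2=4b^{2}$) and inserting this bound into the general expression produced by \cite[Proposition~3.2]{BacKoh18} recovers, after collecting constants, precisely the function $\psi_{b,\eta}$ in the statement.

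The only real work is bookkeeping of the constants arising in the Bacak--Kohlenbach estimate. That proof proceeds by splitting into two regimes according to a threshold on $|\,\|x\|-\|y\|\,|$: in the ``unequal norms'' regime the defect $(\|x\|^{2}+\|y\|^{2})/2-((\|x\|+\|y\|)/2)^{2}=(\|x\|-\|y\|)^{2}/4$ already supplies a positive lower bound for $(\|x\|^{2}+\|y\|^{2})/2-\|(x+y)/2\|^{2}$, whereas in the ``almost equal norms'' regime one invokes the modulus of convexity of $\|\cdot\|$ applied to the normalized pair $u,v$. The two arguments of the outermost $\min$ in $\psi_{b,\eta}$ correspond to precisely these two regimes, and the nested inner $\min$ (between $\eps/2$ and $\eps^{2}\eta^{2}(\eps/(2b))/(72b)$) encodes the crossover threshold chosen to equalize the two bounds and so optimize the final modulus.

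Thus I expect no new conceptual obstacle; the only delicate step is matching the constants from the cited proposition, which is a purely algebraic verification. Once this is done, (b) follows by a direct substitution of the parameter $\eps/(2b)$ into $\eta$, and (a) has already been established.
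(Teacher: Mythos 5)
Part (a) and the identification of \cite[Proposition~3.2]{BacKoh18} (with $\Phi$ the squaring function) as the engine of the proof match the paper, and your description of the two-regime structure of that proposition's proof is accurate. However, the reduction you set up in your second paragraph does not work, and it is not how the paper applies the cited result. Rescaling to $u:=x/(2b)$, $v:=y/(2b)$ and applying $\eta$ only yields the absolute bound $\left\|\frac{x+y}{2}\right\|\le 2b\bigl(1-\eta\bigl(\frac{\eps}{2b}\bigr)\bigr)$, which is vacuous relative to the target $\frac12\|x\|^2+\frac12\|y\|^2$ as soon as $\|x\|,\|y\|$ are small compared with $b$: for instance, with $\|x\|=\|y\|$ of order $\eps$ the right-hand side of the desired inequality is of order $\eps^2$, while your intermediate bound squared is of order $b^2$. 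Moreover there is no ``squared-norm estimate at the $u,v$ level'' to rescale back by $4b^2$ --- producing such an estimate is exactly the problem being solved, so that step is circular. Even in the nearly-equal-norms regime, what is needed is a relative estimate of the form $\left\|\frac{x+y}{2}\right\|\lesssim\frac{\|x\|+\|y\|}{2}\bigl(1-\eta(\cdot)\bigr)$, which requires normalizing by $\|x\|$ and $\|y\|$ themselves, not by $2b$.

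The correct instantiation, which is essentially the entirety of the paper's proof, is: assume $x,y\neq 0$, pass to the \emph{unit} vectors $x/\|x\|$ and $y/\|y\|$, and apply \cite[Proposition~3.2]{BacKoh18} with $r:=b$, $\alpha:=\|x\|$, $\beta:=\|y\|$ and $\Phi$ the squaring function; the argument $\eps/(2b)$ fed to $\eta$ then arises inside that proposition (from the lower bound on $\|x/\|x\|-y/\|y\|\|$ in the nearly-equal-norms regime), not from a global rescaling by $2b$. The remaining work, which your proposal omits entirely, is the verification that on $[0,r]$ the squaring function admits $\eps\mapsto\frac{\eps^2}{4}$ as a modulus of uniform convexity, $\eps\mapsto\frac{\eps}{2r}$ as a modulus of uniform continuity and $\eps\mapsto\eps^2$ as a modulus of uniform increasingness; these are the data that produce the explicit constants in $\psi_{b,\eta}$.
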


\begin{proof} We may assume that $x,y\not= 0.$
We seek to apply \cite[Proposition~3.2]{BacKoh18}. We need, then, only to pass from $x$ to $\frac{x}{\|x\|}$ and from $y$ to $\frac{y}{\|y\|}$ and then to put $r:=b$, $\alpha:=\|x\|$, $\beta:=\|y\|$ and $\Phi$ to be the squaring function. To obtain the conclusion, one has to verify that, for an arbitrary $r>0$, the squaring function has on the interval $[0,r]$ the function $\eps \mapsto \frac{\eps^2}4$ as a modulus of uniform convexity, $\eps \mapsto \frac{\eps}{2r}$ as a modulus of uniform continuity and $\eps \mapsto \eps^2$ as a modulus of uniform increasingness.
\end{proof}

\subsubsection{Smooth and uniformly smooth spaces}

\begin{definition}
Let $X$ be a Banach space. We define the {\bf normalized duality mapping of $X$} to be the map $J :X\to 2^{X^*}$, defined, for all $x \in X$, by
$$ J(x) := \{x^*\in X^* \mid  x^*(x)=\|x\|^2,\ \|x^*\|=\|x\| \}.$$
\end{definition}

A Banach space $X$ is called {\it smooth} if for any $x \in X$ with $\|x\|=1$, we have that for any $y \in X$ with $\|y\|=1$, the limit
\begin{equation}\label{limh}
\lim_{h \to 0} \frac{\|x+hy\|-\|x\|}{h}
\end{equation}
exists. This condition has been proven to be equivalent to the fact that the normalized duality mapping of the space, $J: X \to 2^{X^*}$, is single-valued -- and we shall denote its unique section by $j: X \to X^*$. Therefore, for all $x \in E$, $j(x)(x)=\|x\|^2$ and $\|j(x)\|=\|x\|$. Hilbert spaces are smooth, and clearly $j(x)(y)$ is then simply $\langle y,x \rangle$, for any $x,y$ in the space. Because of this, we may consider the $j$ to be a generalized variant of the inner product, sharing some of its nice properties. We shall generally denote, for all spaces $X$, all $x^* \in X^*$ and $y \in X$, $x^*(y)$ by $\langle y,x^* \rangle$. In addition, the homogeneity of $j$ -- i.e. that for all $x \in X$ and $t \in \R$, $j(tx)=tj(x)$ -- follows immediately from the definition of the duality mapping.

\begin{remark}
These notions of smoothness were introduced in {\rm \cite{Day44}}, under the name of {\em flattening}.
\end{remark}

\begin{lemma}[{cf. \cite[Lemma 1]{Pet70}}]\label{jl}
Let $X$ be a smooth space and $x,y \in X$. Then
$$\|x+y\|^2 \leq \|x\|^2 + 2\langle y, j(x+y) \rangle$$
\end{lemma}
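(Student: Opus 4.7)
The plan is to derive the inequality using only the defining properties of the normalized duality map $j$ (namely that $\langle z, j(z)\rangle = \|z\|^2$ and $\|j(z)\| = \|z\|$), the linearity of $j(x+y)$ as an element of the dual, and two elementary bounds. Smoothness enters the hypothesis only to make $j$ single-valued and hence to give the expression $j(x+y)$ an unambiguous meaning; the argument below in fact works verbatim for any selection of the set-valued duality map $J$.

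First, I would evaluate $\|x+y\|^2$ as the pairing $\langle x+y, j(x+y)\rangle$ and split this by linearity of the functional $j(x+y)$ into $\langle x, j(x+y)\rangle + \langle y, j(x+y)\rangle$. Second, I would estimate the first summand via the pairing bound $\langle x, x^*\rangle \leq \|x\|\,\|x^*\|$, which here yields $\langle x, j(x+y)\rangle \leq \|x\|\,\|x+y\|$ since $\|j(x+y)\| = \|x+y\|$. Third, I would invoke the elementary inequality $ab \leq \tfrac{1}{2}(a^2+b^2)$ with $a:=\|x\|$ and $b:=\|x+y\|$ to absorb the cross term. Chaining the three estimates gives
$$\|x+y\|^2 \;\leq\; \tfrac{1}{2}\|x\|^2 + \tfrac{1}{2}\|x+y\|^2 + \langle y, j(x+y)\rangle,$$
and the claim follows by subtracting $\tfrac{1}{2}\|x+y\|^2$ from both sides and multiplying by $2$.

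I do not anticipate any substantive obstacle: the argument is essentially three lines of manipulation built directly from the definitions. Conceptually, the inequality is the subgradient inequality for the convex function $\tfrac{1}{2}\|\cdot\|^2$, whose Gateaux derivative in a smooth space is precisely $j$; but one need not invoke any differentiability machinery to extract it, since the AM--GM step cheaply substitutes for what would otherwise be a convexity-of-the-squared-norm argument along the segment from $x$ to $x+y$.
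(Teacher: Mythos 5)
Your proof is correct and is essentially identical to the paper's: both evaluate $\|x+y\|^2 = \langle x+y, j(x+y)\rangle$, split by linearity, bound $\langle x, j(x+y)\rangle \leq \|x\|\,\|x+y\| \leq \tfrac12(\|x\|^2 + \|x+y\|^2)$, and rearrange.
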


\begin{proof}
We have that
\begin{align*}
\|x+y\|^2 &= \langle x+y, j(x+y) \rangle \\
&= \langle x, j(x+y) \rangle + \langle y, j(x+y) \rangle \\
&\leq \|x+y\|\|x|| + \langle y, j(x+y) \rangle \\
&\leq \frac12(\|x\|^2 + \|x+y\|^2) + \langle y, j(x+y) \rangle,
\end{align*}
from which the conclusion follows.
\end{proof}

\begin{definition}[{\cite[Definition 1.e.1.(ii)]{LinTza79}}]
Let $X$ be a Banach space. We call the function $\rho_X : (0,\infty) \to \R$, defined, for all $t>0$, by
$$\rho_X(t):= \sup \left\{ \frac{\|x+y\| + \|x-y\|}2 - 1 \bigm| \|x\|=1, \|y\|=t\right\},$$
``the'' {\bf modulus of smoothness} of $X$. We remark that for all $t$, $0 \leq \rho_X(t) \leq t$.
\end{definition}

The following characterization is immediate.

\begin{proposition}
Let $X$ be a Banach space. TFAE:
\begin{enumerate}[(a)]
\item $\lim\limits_{t \to 0} \frac{\rho_X(t)}t = 0$.
\item there is a $\tau : (0, \infty) \to (0, \infty)$  (called ``a'' {\bf modulus of smoothness}) such that for all $\eps > 0$ and all $x,y \in X$ with $\|x\|= 1$, $\|y\| \leq \tau(\eps)$ one has that
$$\|x+y\| + \|x-y\| \leq 2 + \eps\|y\|.$$
\end{enumerate}
In this case, $X$ is called {\bf uniformly smooth}.
\end{proposition}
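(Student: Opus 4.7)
The plan is to treat this as a routine $\varepsilon$--$\delta$ translation: condition (a) says that the function $t \mapsto \rho_X(t)/t$ tends to $0$ at the origin, while condition (b) is just a uniform, $\tau$-indexed quantitative restatement of that fact, applied directly to the definition of $\rho_X$. Both directions will follow immediately from the definition of $\rho_X$ together with the standard rephrasing of a limit by a modulus.

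For the implication (a) $\Rightarrow$ (b), given $\eps > 0$ I would use (a) with $\eps/2$ in place of $\eps$ to obtain some $\tau(\eps) > 0$ such that $\rho_X(t)/t \leq \eps/2$ for all $0 < t \leq \tau(\eps)$. Then, given any $x, y \in X$ with $\|x\| = 1$ and $0 < \|y\| \leq \tau(\eps)$, I apply the defining supremum of $\rho_X$ at $t := \|y\|$ to the specific pair $(x, y)$, yielding
$$\frac{\|x+y\| + \|x-y\|}{2} - 1 \leq \rho_X(\|y\|) \leq \frac{\eps}{2}\|y\|,$$
which upon rearrangement is exactly the inequality in (b); the case $y = 0$ is trivial.

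For (b) $\Rightarrow$ (a), given $\eps > 0$ I would invoke (b) for $\eps$ to produce $\tau(\eps) > 0$. For any $t$ with $0 < t \leq \tau(\eps)$ and any admissible pair $(x, y)$ with $\|x\| = 1$ and $\|y\| = t$ entering the supremum defining $\rho_X(t)$, the assumption yields $\|x+y\| + \|x-y\| \leq 2 + \eps t$, so $\frac{\|x+y\|+\|x-y\|}{2} - 1 \leq \frac{\eps t}{2}$. Taking the supremum over such $(x,y)$ gives $\rho_X(t)/t \leq \eps/2 < \eps$, which establishes (a).

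There is no serious obstacle here; the only minor point to watch is handling $y = 0$ in (b) (where the inequality is an equality at the boundary) and being precise that $\tau$ may be chosen using the modulus from the limit condition, not requiring anything beyond the definitions. Accordingly, the whole proof is essentially a one-line unpacking in each direction, which is presumably why the authors label it as immediate.
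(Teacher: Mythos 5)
Your proof is correct and is exactly the routine unpacking the paper has in mind when it labels the characterization ``immediate'' (the paper supplies no proof at all). Both directions, including the factor of $2$ bookkeeping and the trivial $y=0$ case, check out.
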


\begin{remark}
A uniformly smooth space is smooth, and this condition is equivalent to the limit in \eqref{limh} being attained uniformly in the pair of variables $(x,y)$.
\end{remark}

\begin{remark}
Unlike in the case of convexity, ``the'' modulus of smoothness is not ``a'' modulus of smoothness.
\end{remark}

\begin{proposition}[{cf. \cite[Proposition 2.5]{KohLeu12}}]
Let $X$ be a uniformly smooth Banach space with modulus $\tau$. Put, for all $b > 0$ and $\eps > 0$,
$$r_1(\eps):=\min(\eps,2), \qquad r_2(b):= \max(b,1), \qquad \omega_\tau(b,\eps) := \frac{r_1(\eps)^2}{12r_2(b)} \cdot \tau\left(\frac{r_1(\eps)}{2r_2(b)}\right).$$
Then for all $b >0$, $\eps >0$ and all $x, y \in X$ with $\|x\| \leq b$ and $\|y \| \leq b$, if $\|x-y\| \leq \omega_\tau(b,\eps)$ then $\|j(x)-j(y)\| \leq \eps$.
\end{proposition}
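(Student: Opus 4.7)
The goal is to show that $j\colon X\to X^*$ is uniformly continuous on the ball of radius $b$, with $\omega_\tau(b,\cdot)$ as an explicit modulus. Since $\|j(x)-j(y)\|_{X^*}=\sup\{\langle z,j(x)-j(y)\rangle:\|z\|\leq 1\}$, it suffices to bound $\langle z,j(x)-j(y)\rangle$ above by $\eps$ for every unit vector $z\in X$. Using $r_1$ and $r_2$, one may assume $b\geq 1$ and $\eps\leq 2$.

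The key idea is that Lemma~\ref{jl}, applied with appropriate substitutions (namely $(x,y)\leftarrow(w+tz,-tz)$ and $(x,y)\leftarrow(w-tz,tz)$), yields for any $w\in X$, any unit $z\in X$ and any $t>0$ the finite-difference sandwich
\[
\frac{\|w\|^2-\|w-tz\|^2}{2t}\leq \langle z,j(w)\rangle \leq \frac{\|w+tz\|^2-\|w\|^2}{2t}.
\]
Combining the upper bound at $w=x$ with the lower bound at $w=y$ then gives
\[
\langle z,j(x)-j(y)\rangle \leq \frac{\bigl(\|x+tz\|^2+\|y-tz\|^2\bigr)-\bigl(\|x\|^2+\|y\|^2\bigr)}{2t},
\]
reducing the problem to a purely normwise estimate of the right-hand side.

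To bound this quantity, one invokes the defining inequality of uniform smoothness, rescaled to the sphere of radius $\|x\|$: setting $u:=x/\|x\|$ and $v:=tz/\|x\|$, the condition $\|v\|\leq \tau(\alpha)$ becomes $t\leq \|x\|\tau(\alpha)$, and multiplication by $\|x\|$ yields $\|x+tz\|+\|x-tz\|\leq 2\|x\|+\alpha t$. Triangle inequality $\|y-tz\|\leq \|x-tz\|+\|x-y\|$ then produces $\|x+tz\|+\|y-tz\|\leq 2\|x\|+\alpha t+\|x-y\|$; together with the elementary factorization $a^2-c^2=(a-c)(a+c)$ applied term by term and the bound $\|x+tz\|+\|y-tz\|+\|x\|+\|y\|\leq 4b+2t$, one obtains an estimate of the form
\[
\langle z,j(x)-j(y)\rangle \leq C_1 b\alpha + C_2\,\frac{b\|x-y\|}{t}
\]
(plus a smaller $t$-order correction). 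Choosing $\alpha$ proportional to $\eps/b$ and $t$ proportional to $\eps\cdot\tau(\eps/(2b))$ balances both contributions at $\eps/2$, reproducing the modulus $\omega_\tau(b,\eps)$ up to the numerical constants fixed in the statement. The argument requires $\|x\|$ not too small (so that $t\leq \|x\|\tau(\alpha)$ remains feasible); the opposite case $\max(\|x\|,\|y\|)\leq \eps/2$ is handled directly via $\|j(x)-j(y)\|\leq \|x\|+\|y\|\leq \eps$. The main obstacle lies in the careful bookkeeping of the numerical constants to land on the precise formula $\omega_\tau(b,\eps)$ as written, together with the split between the generic and near-zero cases.
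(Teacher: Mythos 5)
The paper does not actually prove this proposition --- it is quoted verbatim from \cite[Proposition~2.5]{KohLeu12} --- so there is no in-paper proof to compare against. Your outline follows what is essentially the standard route (and the one underlying the cited proof): dualize via $\sup_{\|z\|\le 1}\langle z,j(x)-j(y)\rangle$, sandwich $\langle z,j(w)\rangle$ between difference quotients of $\|\cdot\|^2$ using Lemma~\ref{jl}, invoke the rescaled uniform smoothness inequality at $x$, transfer to $y$ by the triangle inequality, and dispose of the near-zero case via $\|j(w)\|=\|w\|$. The skeleton is sound and the sandwich inequality is correctly derived.

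Two points need repair before this is a proof. First, the step ``factorization $a^2-c^2=(a-c)(a+c)$ applied term by term'' together with the crude sum bound $4b+2t$ does not by itself yield the claimed estimate: writing $d=\|x+tz\|-\|x\|$, $d'=\|y-tz\|-\|y\|$, $P=\|x+tz\|+\|x\|$, $P'=\|y-tz\|+\|y\|$, the quantity $dP+d'P'$ cannot be bounded by $(d+d')\cdot(\text{common factor})$ because $d$ and $d'$ may have opposite signs, and dropping a negative term destroys exactly the cancellation you need (leaving an error of order $b$ rather than $\eps$). The fix is to write $dP+d'P'=(d+d')P+d'(P'-P)$ and use $|d'|\le t$ and $|P-P'|\le 2\|x-y\|+2t$; this works but introduces additional terms of order $\|x-y\|$ and $t$ that must be budgeted. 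Second, and more importantly for this particular statement: the explicit formula for $\omega_\tau$ \emph{is} the content of the proposition, so ``reproducing the modulus up to the numerical constants'' is not enough. In particular, since $\tau$ is not assumed monotone, a modulus of the form $c\,\eps^2 b^{-1}\tau(\eps/(4b))$ is not comparable to $\tfrac{\eps^2}{12b}\tau(\eps/(2b))$ and would not establish the stated result. You must verify that your choices of $\alpha$ and $t$ (and the split between the generic and small-norm cases) actually land on the argument $\tfrac{r_1(\eps)}{2r_2(b)}$ inside $\tau$ and on the factor $\tfrac{1}{12}$; a rough accounting of the terms listed above suggests the budget is tight, so this verification is not a formality.
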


In the PhD thesis of B\'enilan \cite[p. 0.5, Proposition 0.3]{Ben72}, it is shown that the norm-to-norm uniform continuity on bounded subsets of an arbitrary duality selection mapping is in fact equivalent to uniform smoothness. A more recent proof which uses ideas due to Giles \cite{Gil67} may be found in \cite[Appendix A]{Kor15}.

\subsection{Classes of mappings}

In this section, we fix a smooth Banach space $X$ and $C \subseteq X$ a closed, convex, nonempty subset.

\subsubsection{Nonexpansive mappings and sunny nonexpansive retractions}

\begin{definition}
A map $Q:C \to X$ is called {\bf nonexpansive} if for all $x, y \in C$, $\|Qx-Qy\|\leq\|x-y\|$.
\end{definition}

Let $E \subseteq C$ be nonempty.

\begin{definition}
A map $Q:C\to E$ is called a {\bf retraction} if for all $x \in E$, $Qx=x$.
\end{definition}

\begin{definition}
A retraction $Q:C \to E$ is called {\bf sunny} if for all $x \in C$ and $t \geq 0$,
$$Q(Qx+t(x-Qx))=Qx, \ \mbox{if} \ Qx+t(x-Qx)\in C.$$
\end{definition}

\begin{proposition}[{\cite[Lemma 1.13.1]{GoeRei84}}]\label{char-sunny}
Let $Q: C\to E$ be a retraction. Then $Q$ is sunny and nonexpansive if and only if for all $x\in C$ and $y \in E$,
$$\langle x-Qx,j(y-Qx)\rangle \leq 0.$$
\end{proposition}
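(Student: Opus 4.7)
The plan is to prove the two implications separately. For the forward direction, given $x \in C$ and $y \in E$, I would use sunniness to obtain $Q(Qx + t(x - Qx)) = Qx$ for $t \in [0,1]$ (noting that $Qx + t(x - Qx) = (1-t)Qx + tx$ lies in $C$ by convexity since $Qx \in E \subseteq C$). Nonexpansiveness applied against the fixed point $y$ of $Q$ then yields
$$\|Qx - y\| \leq \|Qx + t(x - Qx) - y\|.$$
Squaring and writing $u := Qx - y$, $v := x - Qx$, this becomes $\|u\|^2 \leq \|u + tv\|^2$. Applying Lemma~\ref{jl} to bound the right-hand side gives $\|u + tv\|^2 \leq \|u\|^2 + 2t\langle v, j(u + tv)\rangle$, so $\langle v, j(u + tv)\rangle \geq 0$ for all $t \in (0,1]$.

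Passing to the limit $t \to 0^+$ turns this into $\langle v, j(u)\rangle \geq 0$; by the homogeneity of $j$, $j(y - Qx) = -j(u)$, yielding the desired inequality $\langle x - Qx, j(y - Qx)\rangle \leq 0$. The main obstacle here is justifying the limit transition, which requires the norm-to-weak* continuity of the selection $j$; this is precisely what the assumed smoothness of $X$ (i.e.\ Gateaux differentiability of the norm) provides, via the definition of $j(x)$ as the derivative functional of $\tfrac{1}{2}\|\cdot\|^2$ at $x$.

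For the converse, I would check nonexpansiveness and sunniness independently from the variational inequality. For nonexpansiveness, take $x_1, x_2 \in C$ and instantiate the hypothesis at $(x_1, Qx_2)$ and at $(x_2, Qx_1)$. Using homogeneity of $j$ on one of them and adding, the $Qx_1, Qx_2$ terms combine into $\langle Qx_1 - Qx_2, j(Qx_1 - Qx_2)\rangle = \|Qx_1 - Qx_2\|^2$, producing
$$\|Qx_1 - Qx_2\|^2 \leq \langle x_1 - x_2, j(Qx_1 - Qx_2)\rangle \leq \|x_1 - x_2\|\,\|Qx_1 - Qx_2\|,$$
which is nonexpansiveness. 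For sunniness, suppose $z := Qx + t(x - Qx) \in C$ with $t \geq 0$. Applying the hypothesis at $(z, Qx)$ gives $\langle z - Qz, j(Qx - Qz)\rangle \leq 0$; expanding $z - Qz = (Qx - Qz) + t(x - Qx)$ shows
$$\|Qx - Qz\|^2 + t\langle x - Qx, j(Qx - Qz)\rangle \leq 0.$$
The hypothesis applied at $(x, Qz)$, together with homogeneity of $j$, gives $\langle x - Qx, j(Qx - Qz)\rangle \geq 0$, so the second summand is nonnegative and I conclude $\|Qx - Qz\|^2 \leq 0$, i.e.\ $Qz = Qx$. All manipulations in the converse are purely algebraic, relying only on Lemma~\ref{jl} (implicitly through $\langle w, j(w)\rangle = \|w\|^2$) and the homogeneity of $j$; no limit arguments or additional geometric assumptions are needed.
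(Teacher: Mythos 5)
Your proof is correct. Note that the paper itself gives no proof of this proposition -- it is cited from Goebel--Reich \cite{GoeRei84} -- so there is nothing internal to compare against; what you have written is essentially the standard textbook argument. Both directions check out: the converse is indeed purely algebraic (and its nonexpansiveness half is the same computation the paper uses to prove uniqueness of the sunny nonexpansive retraction), and the only delicate point in the forward direction, the passage to the limit $t \to 0^+$ in $\langle x-Qx, j(u+tv)\rangle \geq 0$, is correctly identified and correctly attributed to smoothness, which yields norm-to-weak* continuity of $j$ (alternatively one can argue directly from the right differentiability of $t \mapsto \|u+tv\|^2$ at $0$, whose derivative is $2\langle v, j(u)\rangle$ by Gateaux differentiability of the norm, so that $\|u\|\le\|u+tv\|$ immediately gives $\langle v,j(u)\rangle\ge 0$). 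One small hypothesis you use implicitly and should flag: the forward direction needs $Qx+t(x-Qx)\in C$ for $t\in[0,1]$, which holds here because $C$ is assumed convex and $Qx\in E\subseteq C$; this matters since sunniness is only postulated conditionally on membership in $C$.
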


\begin{proposition}
There is at most one sunny nonexpansive retraction from $C$ to $E$.
\end{proposition}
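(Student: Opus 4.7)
The plan is to apply the variational characterization of sunny nonexpansive retractions given by Proposition~\ref{char-sunny} and combine the two inequalities by a clever choice of test points, using the homogeneity of the normalized duality mapping $j$ to collapse the resulting sum into a squared norm.

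More concretely, suppose $Q_1, Q_2 : C \to E$ are two sunny nonexpansive retractions, and fix an arbitrary $x \in C$. By Proposition~\ref{char-sunny} applied to $Q_1$ with the test point $y := Q_2 x \in E$, I obtain
\[
\langle x - Q_1 x,\, j(Q_2 x - Q_1 x)\rangle \leq 0,
\]
and symmetrically, applying the characterization to $Q_2$ with $y := Q_1 x \in E$ gives
\[
\langle x - Q_2 x,\, j(Q_1 x - Q_2 x)\rangle \leq 0.
\]
Using the homogeneity of $j$ noted in the excerpt (i.e.\ $j(-z) = -j(z)$), the second inequality rewrites as
\[
\langle Q_2 x - x,\, j(Q_2 x - Q_1 x)\rangle \leq 0.
\]

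Adding the first inequality to this rewritten second one, the $x$ terms cancel and I get
\[
\langle Q_2 x - Q_1 x,\, j(Q_2 x - Q_1 x)\rangle \leq 0.
\]
But by the defining property of the normalized duality mapping, the left-hand side equals $\|Q_2 x - Q_1 x\|^2$, so $Q_1 x = Q_2 x$. Since $x \in C$ was arbitrary, $Q_1 = Q_2$.

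There is essentially no obstacle here: the proof is a textbook consequence of Proposition~\ref{char-sunny} together with the identity $j(-z) = -j(z)$ and $\langle z, j(z)\rangle = \|z\|^2$. The only subtlety worth pointing out is that smoothness of $X$ is genuinely used, both to guarantee that $j$ is single-valued (so that $j(Q_2 x - Q_1 x)$ is an unambiguous functional) and to have the characterization of Proposition~\ref{char-sunny} available in the first place.
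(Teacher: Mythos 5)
Your proof is correct and follows exactly the paper's argument: apply the variational characterization of Proposition~\ref{char-sunny} twice with the test points $Q_2x$ and $Q_1x$, use the homogeneity of $j$ to align the duality-map arguments, and sum to obtain $\|Q_2x-Q_1x\|^2\leq 0$. No further comment is needed.
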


\begin{proof}
Let $Q_1$ and $Q_2$ be two such retractions. Let $x\in C$. It follows that 
$$\langle x-Q_1x, j(Q_2x-Q_1x)\rangle \leq 0$$
and
$$\langle x-Q_2x, j(Q_1x-Q_2x)\rangle \leq 0.$$
Using the homogeneity of $j$ and then summing up, it follows that $\|Q_2x-Q_1x\|^2 \leq 0$ and therefore $Q_1x=Q_2x$.
\end{proof}

\subsubsection{Pseudocontractions}

\begin{definition}
Let $T: C \to C$. We call a function $\theta: (0,\infty) \to (0, \infty)$ a {\bf modulus of continuity} for $T$ if for all $\eps > 0$ and $x, y \in C$ with $\|x-y\|\leq \theta(\eps)$, we have that $\|Tx-Ty\|\leq\eps$.
\end{definition}

\begin{remark}
A map $T: C \to C$ has a modulus of continuity iff it is uniformly continuous.
\end{remark}

\begin{definition}[{\cite[Definition 1]{Bro67}}]
A map $T:C \to C$ is called a {\bf pseudocontraction} if for all $x, y \in C$ and $t >0$, we have that
\begin{equation}
t\|x-y\| \leq \|(t+1)(x-y) - (Tx-Ty)\|.\label{def-psc}
\end{equation}
\end{definition}

\begin{proposition}
Any nonexpansive map is a pseudocontraction.
\end{proposition}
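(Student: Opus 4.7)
The plan is to derive the pseudocontraction inequality directly from the nonexpansiveness of $T$ via a single application of the reverse triangle inequality. Fix $x,y \in C$ and $t > 0$. The right-hand side of \eqref{def-psc} can be split as
\[
\|(t+1)(x-y) - (Tx-Ty)\| \geq (t+1)\|x-y\| - \|Tx - Ty\|,
\]
using $\|a - b\| \geq \|a\| - \|b\|$ with $a := (t+1)(x-y)$ and $b := Tx - Ty$.

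Then I would invoke nonexpansiveness, $\|Tx - Ty\| \leq \|x - y\|$, to bound the subtracted term and obtain
\[
(t+1)\|x-y\| - \|Tx - Ty\| \geq (t+1)\|x-y\| - \|x-y\| = t\|x-y\|,
\]
which combined with the previous inequality yields exactly \eqref{def-psc}.

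There is no real obstacle here: the statement is essentially a one-line consequence of the reverse triangle inequality together with the nonexpansiveness condition, and no appeal to smoothness of $X$ or to the duality mapping $j$ is required. The only minor point worth noting is that the inequality is trivially valid when $x = y$ (both sides vanish), so no case distinction is needed.
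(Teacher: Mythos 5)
Your proof is correct and is essentially the same argument as the paper's: the paper applies the triangle inequality in the rearranged form $\|u\| \leq \|u-v\|+\|v\|$ to the quantities scaled by $1/t$ and then multiplies back by $t$, which is exactly your reverse triangle inequality step combined with nonexpansiveness. Your version is just a slightly more direct presentation of the same computation.
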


\begin{proof}
Let $T: C \to C$ be nonexpansive. Let $x,y \in C$ and $t > 0$. We have that
\begin{align*}
\left(1+ \frac1t\right)\|x - y\| &\leq \left\| \left(1+\frac1t\right)(x-y) - \frac1t(Tx-Ty) \right\| + \frac1t\|Tx-Ty\| \\
&\leq \left\| \left(1+\frac1t\right)(x-y) - \frac1t(Tx-Ty)\right\| + \frac1t\|x-y\|,
\end{align*}
so
$$\|x-y\| \leq \left\| \left(1+\frac1t\right)(x-y) - \frac1t(Tx-Ty)\right\|.$$
Multiplying by $t$, we obtain our conclusion.
\end{proof}

We have the following equivalence.

\begin{proposition}[{\cite[Proposition 1]{Bro67}}]
Let $T: C\to C$. Then $T$ is a pseudocontraction if and only if for all $x,y \in C$,
$$\langle Tx-Ty,j(x-y) \rangle \leq \|x-y\|^2.$$
\end{proposition}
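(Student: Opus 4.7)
The plan is to prove both implications, writing $u := x-y$ and $v := Tx-Ty$ throughout. The case $u = 0$ is trivial (it forces $v = 0$, so both inequalities reduce to $0 \leq 0$), so in what follows I assume $u \neq 0$. For the implication from the inner-product inequality to pseudocontractivity, I would exploit the linearity of $\langle \cdot, j(u)\rangle$ as a functional on $X$: for any $t > 0$,
\[
\langle (t+1)u - v, j(u)\rangle = (t+1)\|u\|^2 - \langle v, j(u)\rangle \geq (t+1)\|u\|^2 - \|u\|^2 = t\|u\|^2,
\]
where the inequality uses the hypothesis $\langle v, j(u)\rangle \leq \|u\|^2$. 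On the other hand, the duality bound gives $\langle (t+1)u - v, j(u)\rangle \leq \|(t+1)u - v\| \cdot \|j(u)\| = \|(t+1)u - v\| \cdot \|u\|$. Dividing by $\|u\| > 0$ yields the pseudocontraction inequality $t\|u\| \leq \|(t+1)u - v\|$.

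For the converse, the idea is to ``differentiate pseudocontractivity at $t = \infty$''. Substituting $s := 1/(t+1) \in (0,1)$ recasts $t\|u\| \leq \|(t+1)u - v\|$ as the equivalent statement $(1-s)\|u\| \leq \|u - sv\|$, or
\[
\frac{\|u - sv\| - \|u\|}{s} \geq -\|u\|.
\]
Letting $s \to 0^+$ and invoking smoothness of $X$ -- more precisely, that single-valuedness of $J$ means $\|\cdot\|$ is G\^ateaux-differentiable at $u \neq 0$ with derivative $j(u)/\|u\|$ -- the left-hand side converges to $\langle -v, j(u)\rangle / \|u\| = -\langle v, j(u)\rangle / \|u\|$. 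Multiplying through by $\|u\|$ yields $\langle v, j(u)\rangle \leq \|u\|^2$.

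The main obstacle is justifying this one-sided limit rigorously, since the explicit G\^ateaux-derivative formula for $\|\cdot\|$ on a smooth space is not stated in the excerpt. It does follow from the stated equivalence between smoothness and single-valuedness of $J$ via a homogeneity rescaling that reduces to the unit-sphere definition in \eqref{limh}. A more algebraic alternative staying within the material of the excerpt would square the pseudocontraction inequality and apply Lemma~\ref{jl} with $a := tu$, $b := u-v$ to deduce $\langle u-v, j((t+1)u-v)\rangle \geq 0$; using homogeneity of $j$ this rewrites as $\langle u-v, j(u - v/(t+1))\rangle \geq 0$, and letting $t \to \infty$ together with the (standard) norm-to-weak* continuity of $j$ in smooth spaces would yield $\langle u-v, j(u)\rangle \geq 0$, equivalent to the desired inequality.
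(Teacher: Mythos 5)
Your proof is correct. Note that the paper does not prove this proposition at all -- it simply cites \cite[Proposition 1]{Bro67} -- so there is no in-paper argument to compare against; what you have written is essentially the classical argument (a special case of Kato's characterization of accretivity). The forward direction is airtight: linearity of $j(u)$ as a functional plus $\|j(u)\|=\|u\|$ gives $t\|u\|^2\le\langle(t+1)u-v,j(u)\rangle\le\|(t+1)u-v\|\,\|u\|$, and dividing by $\|u\|\ne 0$ finishes. For the converse, the substitution $s=1/(t+1)$ and the one-sided limit are fine; the only external ingredient is that in a smooth space the norm is G\^ateaux differentiable at $u\ne0$ with $\lim_{s\to0}\frac{\|u+sw\|-\|u\|}{s}=\langle w,j(u)\rangle/\|u\|$. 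You correctly flag this; it does follow from the paper's definition via the standard sandwich $\frac{\|u+sw\|-\|u\|}{s}\ge\frac{\langle w,j(u)\rangle}{\|u\|}\ge\frac{\|u+s'w\|-\|u\|}{s'}$ for $s>0>s'$ (a consequence of $\langle u+sw,j(u)\rangle\le\|u+sw\|\,\|u\|$ and $\langle u,j(u)\rangle=\|u\|^2$), together with the existence of the two-sided limit \eqref{limh}. One caveat on your ``algebraic alternative'': it trades the G\^ateaux-derivative fact for norm-to-weak$^*$ continuity of $j$, which in a merely smooth space is likewise standard but not in the excerpt (it is only supplied there, in quantitative norm-to-norm form, under uniform smoothness), so it is not actually more self-contained; the derivative route is the cleaner one for the proposition as stated.
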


\begin{definition}[{cf. \cite[(2.9)]{Gus67}}]
Let $k \in (0,1)$. A map $T: C \to C$ is called a {\bf $k$-strong pseudocontraction} if for all $x,y \in C$,
$$\langle Tx-Ty,j(x-y) \rangle \leq k\|x-y\|^2.$$
\end{definition}

\begin{proposition}\label{sps1}
Let $T: C \to C$ be a continuous pseudocontraction, $k \in (0,1)$ and $u \in C$. Define the map $U: C \to C$, by putting, for all $x \in C$, $Ux:=kTx+(1-k)u$. Then $U$ is a continuous $k$-strong pseudocontraction.
\end{proposition}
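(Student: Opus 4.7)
The plan is to reduce the claim directly to the duality-pairing characterization of pseudocontractions given by the immediately preceding proposition, which states that $T$ is a pseudocontraction iff $\langle Tx-Ty,j(x-y)\rangle \leq \|x-y\|^2$ for all $x,y\in C$. This is the natural characterization to use, since the defining inequality of $k$-strong pseudocontraction is itself phrased in these terms. Continuity of $U$ is essentially free: $U$ is an affine combination of the continuous map $T$ with the constant map sending every point to $u$, hence continuous.

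For the strong pseudocontraction property, I would first compute $Ux - Uy = k(Tx - Ty)$, since the constant term $(1-k)u$ cancels. Then using the linearity of $x^* \mapsto \langle \cdot, x^*\rangle$ in its first argument (which is literal linearity, as $\langle z, x^*\rangle$ is just $x^*(z)$) together with the pseudocontraction characterization applied to $T$, I obtain
\[
\langle Ux - Uy, j(x-y)\rangle = k\langle Tx-Ty, j(x-y)\rangle \leq k\|x-y\|^2,
\]
which is precisely the required inequality for a $k$-strong pseudocontraction. There is no real obstacle: the only thing one needs to notice is that the shift by $(1-k)u$ does not interact with $j(x-y)$ because it disappears in the difference $Ux-Uy$, and the scalar $k$ pulls out of the pairing by linearity.
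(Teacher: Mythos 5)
Your proposal is correct and is essentially the paper's own argument: both rest on the duality-pairing characterization of pseudocontractions and the identity $Ux-Uy=k(Tx-Ty)$, the paper merely writing it in the inverted form $Tx=\frac1k Ux-\frac{1-k}{k}u$ before substituting. The continuity remark is also handled the same (trivial) way.
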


\begin{proof}
We have that for all $x$, $Tx=\frac1k Ux - \frac{1-k}k u$, so for all $x,y$,
$$\left\langle \frac1k Ux - \frac1k Uy, j(x-y) \right\rangle \leq \|x-y\|^2,$$
from which our conclusion follows.
\end{proof}

\begin{proposition}\label{sps2}
Let $k \in (0,1)$ and $T: C \to C$ be a continuous $k$-strong pseudocontraction. Then $T$ has a unique fixed point.
\end{proposition}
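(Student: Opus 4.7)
The plan is to split the argument into uniqueness, which is a one-line calculation, and existence, which is the bulk of the work.

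For uniqueness, if $x, y \in C$ both satisfy $Tx = x$ and $Ty = y$, then the $k$-strong pseudocontraction inequality applied to $(x,y)$ reads
$$\|x-y\|^2 = \langle x-y, j(x-y)\rangle = \langle Tx - Ty, j(x-y)\rangle \leq k\|x-y\|^2,$$
and since $k < 1$ this forces $x = y$. This uses only that $X$ is smooth so that $j$ is single-valued.

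For existence, I would first pass to the equivalent ``accretive'' form of the definition,
$$\langle (I-T)x - (I-T)y, j(x-y)\rangle \geq (1-k)\|x-y\|^2,$$
which yields the norm lower bound $\|(I-T)x - (I-T)y\| \geq (1-k)\|x-y\|$, so $I-T$ is injective with closed range on $C$; it remains to place $0 \in (I-T)(C)$. My approach would be a continuation argument. Fix $x_0 \in C$ and set $T_t(x) := tT(x) + (1-t)x_0$ for $t \in [0,1]$, so each $T_t : C \to C$ is a continuous $tk$-strong pseudocontraction. The set $S := \{t \in [0,1] : T_t \text{ has a fixed point in } C\}$ contains $0$ trivially. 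Closedness of $S$ would follow from a direct Cauchy estimate on the fixed points $x_{t_n}$ of $T_{t_n}$: substituting $y = x_{t_m}$ into the $T_{t_n}$ lower bound and using $Tx_{t_j} = (x_{t_j} - (1-t_j)x_0)/t_j$ produces
$$\|x_{t_n} - x_{t_m}\| \leq \frac{|t_m - t_n|}{(1-t_n k)(1-t_m k)}\|Tx_0 - x_0\|,$$
so $(x_{t_n})$ is Cauchy, and continuity of $T$ together with closedness of $C$ produces a fixed point of $T_t$ at the limit.

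The hard part would be openness of $S$: given $t_0 \in S$ with fixed point $x_{t_0}$, to produce fixed points of $T_t$ for $t$ near $t_0$. Because $T$ is only continuous (not Lipschitz) and $C$ is not compact, no direct Banach-contraction or Schauder argument applies. The standard way out is to invoke a Deimling--Martin type theorem on continuous strongly accretive operators (which guarantees openness of the range under a range condition), or more directly a Morales-style existence theorem for continuous strong pseudocontractions on closed convex subsets of a smooth Banach space. Since the proposition sits in a preliminary section and is a classical result, I would expect the paper simply to cite such a reference rather than to reprove it in detail.
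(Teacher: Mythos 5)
Your proposal matches the paper's proof: uniqueness is exactly the same one-line computation with the $k$-strong pseudocontraction inequality, and for existence the paper does precisely what you anticipated, citing a classical result -- namely \cite[Proposition 3]{Mar73} together with the convexity of $C$ (which guarantees the required weak inwardness/subtangential condition, since $T$ maps $C$ into itself). The continuation machinery you sketch is therefore not needed; the paper defers the entire existence argument to that reference.
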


\begin{proof}
If $x$ and $y$ are fixed points of $T$, $\|x-y\|^2\leq k\|x-y\|^2$, so $x=y$. The existence of a fixed point follows from \cite[Proposition 3]{Mar73} and the convexity of $C$.
\end{proof}

\begin{definition}
If $T: C \to C$ is a pseudocontraction, we define the map $f_T: C \to X$, for all $x \in C$, by $f_T(x):=2x-Tx$.
\end{definition}

\begin{proposition}
Let $T: C \to C$ be a continuous pseudocontraction. Then for all $y \in C$ there is a unique $x \in C$ such that $f_T(x)=y$.
\end{proposition}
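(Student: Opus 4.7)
The plan is to reduce the problem to Propositions \ref{sps1} and \ref{sps2} via a simple reformulation: solving $f_T(x)=y$, i.e.\ $2x-Tx=y$, is equivalent to finding a fixed point of the map $U:C\to C$ given by $Ux:=\tfrac12 Tx + \tfrac12 y$.

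First, fix $y\in C$ and define $U$ as above; note that $U$ maps $C$ into itself because $C$ is convex and $Tx,y\in C$. By Proposition \ref{sps1} applied with $k:=\tfrac12$ and $u:=y$, the map $U$ is a continuous $\tfrac12$-strong pseudocontraction. Proposition \ref{sps2} then yields a (unique) fixed point $x\in C$ of $U$. From $Ux=x$ we get $\tfrac12 Tx + \tfrac12 y = x$, and multiplying by $2$ and rearranging gives $2x-Tx=y$, i.e.\ $f_T(x)=y$, which establishes existence.

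For uniqueness, the cleanest route is to appeal directly to the defining inequality \eqref{def-psc} of a pseudocontraction with $t:=1$: if $x_1,x_2\in C$ satisfy $f_T(x_1)=f_T(x_2)$, then
\[
\|x_1-x_2\|\leq \|2(x_1-x_2)-(Tx_1-Tx_2)\| = \|f_T(x_1)-f_T(x_2)\| = 0,
\]
so $x_1=x_2$. (Alternatively, uniqueness follows from the uniqueness clause in Proposition \ref{sps2}, since any such $x_1,x_2$ are both fixed points of $U$.)

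There is no real obstacle here; the entire content is the observation that $f_T(x)=y$ can be rewritten as a fixed-point equation for a strong pseudocontraction, after which both existence and uniqueness are handed to us by the preceding two propositions.
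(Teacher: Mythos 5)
Your proposal is correct and follows essentially the same route as the paper: both reformulate $f_T(x)=y$ as the fixed-point equation $Ux=x$ for $Ux=\frac{Tx+y}{2}$ and then invoke Propositions~\ref{sps1} and~\ref{sps2}. Your extra direct uniqueness argument via \eqref{def-psc} with $t=1$ is a valid (and slightly more self-contained) alternative to the uniqueness clause of Proposition~\ref{sps2}, but the substance is the same.
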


\begin{proof}
Let $y\in C$. Define the map $U: C \to C$, for all $z \in C$, by $Uz:=\frac{Tz+y}2$. Then, by Proposition~\ref{sps1}, $U$ is a continuous $\frac12$-strong pseudocontraction. Since we have that for all $x \in C$, $f_T(x)=y$ iff $Ux=x$, the conclusion follows by applying Proposition~\ref{sps2}.
\end{proof}

\begin{definition}
If $T: C \to C$ is a continuous pseudocontraction, we define the map $g_T: C \to C$ by putting, for all $y \in C$, $g_T(y)$ to be the unique $x \in C$ such that $f_T(x)=y$.
\end{definition}

\begin{notation}
For any function $\theta : (0,\infty) \to (0, \infty)$ and for any $\eps>0$, put $\tilde{\theta}(\eps):=\min\left(\frac\eps4, \theta\left(\frac\eps2\right)\right)$.
\end{notation}

\begin{proposition}
Let $T: C \to C$ be a continuous pseudocontraction. Then:
\begin{enumerate}[(i)]
\item for all $y \in C$, $f_T(g_T(y))=y$;
\item $g_T$ is nonexpansive;
\item for all $x \in C$, $\|x-g_Tx\| \leq \|x-Tx\|$;
\item $g_T$ and $T$ have the same fixed points;
\item if $T$ is uniformly continuous with modulus $\theta$, then for all $x \in C$ and all $\eps>0$, with $\|x-g_Tx\| \leq \tilde{\theta}(\eps)$, we have that $\|x-Tx\| \leq \eps$.
\end{enumerate}
\end{proposition}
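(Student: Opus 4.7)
The proof proposal is to handle the five items in order, exploiting the defining identity $f_T(g_T(y)) = y$ together with the pseudocontractive inequality \eqref{def-psc} applied at the special value $t = 1$.

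Item (i) is nothing more than the definition of $g_T$: for $y \in C$, $g_T(y)$ is \emph{by construction} the unique $x \in C$ with $f_T(x) = y$. For item (ii), given $y_1, y_2 \in C$ let $x_i := g_T(y_i)$, so that $2x_i - Tx_i = y_i$. Instantiating the pseudocontraction inequality with $t = 1$ at the pair $(x_1, x_2)$ gives
$$\|x_1 - x_2\| \leq \|2(x_1 - x_2) - (Tx_1 - Tx_2)\| = \|(2x_1 - Tx_1) - (2x_2 - Tx_2)\| = \|y_1 - y_2\|,$$
which is the required nonexpansiveness. Item (iii) is similar: fix $x \in C$ and apply \eqref{def-psc} with $t = 1$ to the pair $(x, g_T x)$; since $f_T(g_T x) = x$, the right-hand side collapses to $\|f_T(x) - x\| = \|(2x - Tx) - x\| = \|x - Tx\|$.

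For item (iv), note that $Tx = x$ is equivalent to $f_T(x) = 2x - Tx = x$, and by the uniqueness in the definition of $g_T$ this is equivalent to $g_T(x) = x$. Hence $\mathrm{Fix}(T) = \mathrm{Fix}(g_T)$.

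Item (v) is the only one requiring more than a one-line argument, but the idea is the following key observation: rewriting $f_T(g_T x) = x$ as $2 g_T x - T(g_T x) = x$ yields the identity
$$g_T x - T(g_T x) = x - g_T x,$$
so $\|g_T x - T(g_T x)\| = \|x - g_T x\|$. The plan is then to split
$$\|x - Tx\| \leq \|x - g_T x\| + \|g_T x - T(g_T x)\| + \|T(g_T x) - Tx\|$$
via the triangle inequality. The first two summands equal $\|x - g_T x\| \leq \tilde\theta(\eps) \leq \eps/4$ by the above identity and the definition of $\tilde\theta$; the third is bounded by $\eps/2$ by uniform continuity of $T$, since $\|x - g_T x\| \leq \tilde\theta(\eps) \leq \theta(\eps/2)$. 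Summing gives $\|x - Tx\| \leq \eps/4 + \eps/4 + \eps/2 = \eps$. There is no serious obstacle here: the only subtle point is recognising that the identity $x - g_T x = g_T x - T(g_T x)$ is built into the very definition of the resolvent, and this is precisely what makes the constants in $\tilde\theta$ come out correctly.
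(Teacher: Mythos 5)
Your proposal is correct and follows essentially the same route as the paper: items (i)--(iv) use the definition of $g_T$ and the pseudocontraction inequality at $t=1$ exactly as in the paper's proof, and your three-term decomposition in (v) is just a reorganization of the paper's computation $\|x-Tx\|=\|2(x-g_Tx)-(Tx-Tg_Tx)\|\leq 2\|x-g_Tx\|+\|Tx-Tg_Tx\|$, with the resolvent identity $x-g_Tx=g_Tx-Tg_Tx$ accounting for the factor $2$. No gaps.
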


\begin{proof}
\begin{enumerate}[(i)]
\item \label{i1} Immediately, from the definition of $g_T$.
\item Let $x, y \in C$ and apply \eqref{def-psc} for $x \mapsto g_T(x)$, $y \mapsto g_T(y)$ and $t \mapsto 1$ to obtain -- using (\ref{i1}) -- that
$$\|g_Tx-g_Ty\| \leq \|f_Tg_Tx - f_Tg_Ty\| = \|x-y\|.$$
\item \label{i3} Let $x \in C$ and apply \eqref{def-psc} for $x \mapsto x$, $y \mapsto g_T(x)$ and $t \mapsto 1$ to obtain -- again, using (\ref{i1}) --  that
$$\|x-g_Tx\| \leq \|f_Tx - f_Tg_Tx\| = \|f_Tx -x \| = \|x-Tx\|.$$
\item \label{i4} One direction follows from (\ref{i3}). For the other, let $p \in C$ be such that $g_Tp=p$. Then $p=f_Tg_Tp=f_Tp=2p-Tp$, so $p$ is a fixed point of $T$.
\item What follows is a quantitative version of the proof of (\ref{i4}). If $\|x-g_Tx\| \leq \theta\left(\frac\eps2\right)$, then $\|Tx-Tg_Tx\| \leq \frac\eps2$. Therefore, we have that
\begin{align*}
\|x-Tx\| = \|f_Tx-x\| &= \|f_Tx-f_Tg_Tx\| \\
&= \|2(x-g_Tx) - (Tx-Tg_Tx) \| \\
&\leq 2\|x-g_Tx\| + \|Tx-Tg_Tx\| \\
&\leq 2 \cdot \frac\eps4 + \frac\eps2 = \eps.
\end{align*}
\end{enumerate}
\end{proof}

\begin{definition}
If $T : C \to C$ is a continuous pseudocontraction, we define the map $h_T : C \to C$ by putting $h_T:=T$ if $T$ is nonexpansive and $h_T:=g_T$ otherwise.
\end{definition}

The map $h_T$ is defined purely for our convenience, as we could have used $g_T$ regardless of the status of $T$, but we want to emphasize that if $T$ is nonexpansive, then the use of $T$ is sufficient.

\begin{corollary}\label{cor-ht}
Let $T: C \to C$ be a continuous pseudocontraction. Then:
\begin{enumerate}[(i)]
\item $h_T$ is nonexpansive;
\item for all $x \in C$, $\|x-h_Tx\| \leq \|x-Tx\|$;
\item if $T$ is uniformly continuous with modulus $\theta$, then for all $x \in C$ and all $\eps>0$, with $\|x-h_Tx\| \leq \tilde{\theta}(\eps)$, we have that $\|x-Tx\| \leq \eps$;
\item $h_T$ and $T$ have the same fixed points.
\end{enumerate}
\end{corollary}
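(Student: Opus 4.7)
The proof plan is straightforward: proceed by case distinction on whether $T$ is nonexpansive, since $h_T$ is defined by cases. In each case, the four properties are either immediate or follow directly from the preceding proposition on $g_T$.

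First, suppose $T$ is nonexpansive. Then $h_T = T$, so (i) holds by hypothesis, (ii) holds trivially since $\|x - h_T x\| = \|x - T x\|$, and (iv) is immediate. For (iii), note that $\tilde{\theta}(\eps) \leq \eps/4 \leq \eps$, so $\|x - h_T x\| \leq \tilde{\theta}(\eps)$ already gives $\|x - T x\| \leq \eps$ directly (no use of the modulus of continuity is needed in this case).

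Second, suppose $T$ is not nonexpansive. Then $h_T = g_T$, and all four properties have already been established for $g_T$ in parts (ii)--(v) of the preceding proposition, so each item transfers verbatim.

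There is no real obstacle; the only minor subtlety is noticing that in the nonexpansive branch of (iii) the bound $\tilde{\theta}(\eps) \leq \eps/4$ is what makes the implication go through without any reference to $\theta$ itself, which is consistent with the remark that $h_T$ was introduced purely as a notational convenience to emphasize that if $T$ is nonexpansive the resolvent can be bypassed.
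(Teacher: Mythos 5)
Your proof is correct and is exactly the intended argument: the paper states this corollary without proof, as it follows immediately by the case distinction in the definition of $h_T$, with the nonexpansive branch being trivial (using $\tilde{\theta}(\eps)\leq\eps/4\leq\eps$ for (iii)) and the other branch being items (ii)--(v) of the preceding proposition on $g_T$.
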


\section{The proof using limsup's but only $\varepsilon$-infima}\label{section-limsup}

The main focus of this paper is the following theorem (here and below 
$\N^*:=\{ 1,2,3,\ldots\}$). 

\begin{theorem}[{cf. \cite{Rei80}}]\label{main-thm}
Let $X$ be a Banach space which is uniformly convex with modulus $\eta$ and uniformly smooth with modulus $\tau$. Let $C \subseteq X$ a closed, convex, nonempty subset. Let $b \in \N^*$ be such that for all $y \in C$, $\|y\| \leq b$ and the diameter of $C$ is bounded by $b$. Let $T: C \to C$ be a pseudocontraction that is uniformly continuous with modulus $\theta$ and $x \in C$. For all $t \in (0,1)$ put $x_t$ to be the unique point in $C$ such that $x_t = tTx_t + (1-t)x$ (which exists by Propositions~\ref{sps1} and \ref{sps2}). Then for all $(t_n) \subseteq (0,1)$ such that $\lim\limits_{n \to \infty} t_n = 1$ we have that $(x_{t_n})$ is Cauchy.
\end{theorem}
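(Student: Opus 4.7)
Following Morales \cite{Mor90}, my plan is to study the $\limsup$-functional $F : C \to \R$ defined by $F(z) := \limsup_{n \to \infty}\|x_{t_n} - z\|^2$. A direct estimate shows that $F$ is $2b$-Lipschitz (hence continuous) and convex. The crucial refinement is \emph{uniform} convexity: applying Proposition~\ref{l3-a} (with norm bound $b$, valid since $C$ has diameter at most $b$) to the pair $x_{t_n} - z_1$, $x_{t_n} - z_2$ and passing to $\limsup$ yields
$$F\!\left(\tfrac{z_1 + z_2}{2}\right) + \psi_{b, \eta}(\|z_1 - z_2\|) \leq \tfrac12 F(z_1) + \tfrac12 F(z_2)$$
whenever $\|z_1 - z_2\| \in (0, 2]$. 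From this one reads off a modulus of uniqueness for the infimum problem for $F$: given $\eps > 0$ there is $\delta = \delta(\eps, b, \eta) > 0$ such that any two $\delta$-approximate infima of $F$ on $C$ are $\eps$-close. This is precisely the finitary device that, as the introduction stresses, allows the rest of the argument to proceed using only $\eps$-infima of $F$, rather than $F$ itself as an extensional object.

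Since $X$ is uniformly smooth it is reflexive, and since $F$ is convex and continuous on the bounded closed convex set $C$, $F$ attains its infimum at some $p \in C$, which is unique by the modulus above. To show $p \in Fix(T)$ I would use that $x_{t_n}$ is an asymptotic fixed point: from $x_{t_n} = t_n T x_{t_n} + (1 - t_n)x$ one computes
$$\|x_{t_n} - T x_{t_n}\| = (1 - t_n)\|x - T x_{t_n}\| \leq b(1 - t_n) \to 0,$$
so by Corollary~\ref{cor-ht} also $\|x_{t_n} - h_T x_{t_n}\| \to 0$ (with a rate expressible through $\theta$). Nonexpansiveness of $h_T$ then gives $\|x_{t_n} - h_T p\| \leq \|x_{t_n} - h_T x_{t_n}\| + \|x_{t_n} - p\|$, hence $F(h_T p) \leq F(p)$, which forces $h_T p = p$ by uniqueness, and therefore $Tp = p$.

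To conclude, I would show that $x_{t_n} \to p$ strongly, which is even stronger than Cauchy. From the pseudocontractive inequality together with the decomposition $x_{t_n} - p = t_n(Tx_{t_n} - Tp) + (1 - t_n)(x - p)$ (using $Tp = p$), expanding $\langle x_{t_n} - p, j(x_{t_n} - p)\rangle$ gives
$$\|x_{t_n} - p\|^2 \leq \langle x - p,\, j(x_{t_n} - p)\rangle.$$
To control the right-hand side, one identifies $p$ with the sunny nonexpansive retraction $Qx$ of $x$ onto $Fix(T)$: for $y \in Fix(T) \subseteq C$ and $s \in (0, 1]$, combining the minimality $F(p + s(y - p)) \geq F(p)$ with an upper bound on $\|x_{t_n} - p - s(y - p)\|^2$ from Lemma~\ref{jl}, then letting $s \to 0^+$ and using norm-to-norm uniform continuity of $j$ on bounded sets (available because $X$ is uniformly smooth), yields $\langle x - p, j(y - p)\rangle \leq 0$ for every $y \in Fix(T)$, and hence $p = Qx$ by Proposition~\ref{char-sunny}. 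The main obstacle is passing from this variational inequality at points $y \in Fix(T)$ to a corresponding control of $\langle x - p, j(x_{t_n} - p)\rangle$ as $n \to \infty$, since $x_{t_n}$ is not itself in $Fix(T)$: one has to extract strong limit points of $(x_{t_n})$ lying in $Fix(T)$ (using reflexivity of $X$, demi-closedness of $I - T$ in our uniformly convex setting, and norm continuity of $j$), or argue directly via the modulus of uniqueness that the values $F(x_{t_n})$ accumulate at $F(p)$ and then invoke uniqueness to pin down the whole sequence. This is the delicate step where a careful quantitative bookkeeping of $\eps$-infima is indispensable, and it is exactly the kind of passage that the technical sections of the paper are set up to handle.
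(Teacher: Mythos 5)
Your setup---the uniformly convex functional $F$, the modulus of uniqueness from Proposition~\ref{l3-a}, the existence of the minimizer $p$ via reflexivity, the identification $p\in Fix(T)$ through $h_T$, and the inequality $\|x_{t_n}-p\|^2\leq\langle x-p,\,j(x_{t_n}-p)\rangle$---matches the paper's first proof. But the step you yourself flag as ``the main obstacle'' is a genuine gap, and neither route you sketch for it works. You head toward the variational inequality $\langle x-p,j(y-p)\rangle\leq 0$ for $y\in Fix(T)$, i.e.\ the sunny-retraction characterization; in the paper this inequality is a \emph{consequence} of the convergence (it is derived in Section~\ref{sec:metastab} after the fact) and does not feed back into a bound on $\langle x-p,j(x_{t_n}-p)\rangle$, precisely because $x_{t_n}\notin Fix(T)$. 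Your fallbacks fail too: reflexivity yields only \emph{weak} limit points, and $j$ is not weakly continuous in general uniformly convex, uniformly smooth spaces (e.g.\ $L^p$, $p\neq 2$), so you cannot pass to the limit in $\langle x-p,j(x_{t_{n_k}}-p)\rangle$; and ``the values $F(x_{t_n})$ accumulate at $F(p)$'' is essentially the conclusion being sought. The mechanism that actually closes the loop (Morales' argument, quantified in Claim~4 of the paper) perturbs the minimizer in the direction of the anchor $x$ itself: since $F(p+\mu(x-p))\geq F(p)$ for small $\mu>0$, Lemma~\ref{jl} gives $\|x_{t_n}-p-\mu(x-p)\|^2\leq\|x_{t_n}-p\|^2-2\mu\langle x-p,j(x_{t_n}-p-\mu(x-p))\rangle$, the uniform norm-to-norm continuity of $j$ replaces $j(x_{t_n}-p-\mu(x-p))$ by $j(x_{t_n}-p)$ up to $\eps$, and Lemma~\ref{liminf} then yields $\liminf_n\langle x-p,j(x_{t_n}-p)\rangle\leq 0$. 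Combined with your inequality this produces a \emph{subsequence} converging to $p$.

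That exposes the second gap: you assert ``$x_{t_n}\to p$ strongly'' but the argument only delivers subsequential convergence, and you never address the upgrade to the full sequence. The paper does this by comparing an arbitrary sequence $(t_n)$ with the canonical sequence $s_m=1-\frac{1}{m+1}$: each admits a subsequential limit ($q$ resp.\ $p$), and $p=q$ is proved by deriving $\langle q-x,j(q-p)\rangle\leq\eps^2/2$ and $\langle p-x,j(p-q)\rangle\leq\eps^2/2$ and summing; a further diagonal argument (Claims 5--6) then converts ``every sequence has a good subsequence converging to the \emph{same} point'' into convergence of the whole sequence. Without this uniqueness-across-sequences step, Cauchyness of $(x_{t_n})$ does not follow from the existence of one convergent subsequence.
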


This theorem was first proven by Reich \cite{Rei80} without the assumption of uniform convexity. The starting point of our investigations is the proof given by Morales \cite{Mor90}, which we shall now illustrate, after giving a preliminary lemma.

\begin{lemma}\label{liminf}
Let $(a_n)$, $(b_n)$ be two bounded sequences of reals. Then
$$\liminf_{n \to \infty} (a_n - b_n) \leq \limsup_{n \to \infty} a_n - \limsup_{n \to \infty} b_n.$$
\end{lemma}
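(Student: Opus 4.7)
The plan is to reduce this to the standard $\varepsilon$-$N$ characterization of $\limsup$ combined with the identity $\liminf(c - b_n) = c - \limsup b_n$ for a constant $c$. Concretely, I would fix an arbitrary $\varepsilon > 0$ and choose $N \in \N$ such that for all $n \geq N$,
\[ a_n \leq \limsup_{k \to \infty} a_k + \varepsilon. \]
Subtracting $b_n$ from both sides gives the pointwise bound
\[ a_n - b_n \leq \Bigl(\limsup_{k \to \infty} a_k + \varepsilon\Bigr) - b_n \qquad \text{for all } n \geq N. \]
Now I would apply $\liminf$ to both sides, using monotonicity of $\liminf$ on sequences that eventually satisfy a pointwise inequality, together with the elementary identity $\liminf_{n \to \infty}(c - b_n) = c - \limsup_{n \to \infty} b_n$ (for $c$ a constant), to conclude
\[ \liminf_{n \to \infty}(a_n - b_n) \leq \limsup_{n \to \infty} a_n + \varepsilon - \limsup_{n \to \infty} b_n. \]
Since $\varepsilon > 0$ was arbitrary, letting $\varepsilon \to 0$ yields the lemma.

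An equivalent and essentially cost-free alternative is to note that $a_n = (a_n - b_n) + b_n$ and invoke the general superadditivity-type inequality $\liminf x_n + \limsup y_n \leq \limsup(x_n + y_n)$, instantiated at $x_n := a_n - b_n$ and $y_n := b_n$; this delivers exactly $\liminf(a_n - b_n) + \limsup b_n \leq \limsup a_n$, which is the same statement. Both $(a_n)$ and $(b_n)$ being bounded ensures we never encounter an indeterminate form $\infty - \infty$, so no qualifications are needed.

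I do not anticipate any real obstacle here — the only point requiring attention is the sign flip when moving a $-b_n$ term across a $\liminf$, and the need to take $\liminf$ rather than $\limsup$ on the right-hand side (since $a_n - \limsup_k a_k$ may be negative infinitely often, one cannot simply replace $a_n$ by $\limsup a_k$ inside the $\limsup$). Since Section~\ref{section-approx} later replaces these classical $\limsup$'s by approximate $\varepsilon$-$\limsup$'s, it may be worth writing the argument in the clean $\varepsilon$-$N$ form above, as it transfers more directly to the arithmetized setting discussed in the introduction.
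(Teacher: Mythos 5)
Your proposal is correct. Your ``alternative'' route is in fact exactly the paper's proof: the paper writes $b_n = (b_n - a_n) + a_n$, applies subadditivity of $\limsup$, and uses $\limsup_{n\to\infty}(b_n - a_n) = -\liminf_{n\to\infty}(a_n - b_n)$, which is precisely your mixed inequality $\liminf x_n + \limsup y_n \leq \limsup(x_n + y_n)$ instantiated at $x_n := a_n - b_n$, $y_n := b_n$. Your primary $\varepsilon$-$N$ argument is an equally valid, slightly more explicit variant, and your observation that it transfers more directly to the approximate-$\limsup$ setting of Section~\ref{section-approx} is apt -- the quantitative surrogate actually used there (Lemma~\ref{lpe}) is indeed phrased in terms of explicit indices rather than via an algebraic $\limsup$ identity.
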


\begin{proof}
We have that:
$$\limsup_{n\to \infty} b_n = \limsup_{n \to \infty} (b_n - a_n + a_n) \leq \limsup_{n \to \infty} (b_n-a_n) + \limsup_{n \to \infty} a_n = - \liminf_{n \to \infty} (a_n - b_n) + \limsup_{n \to \infty} a_n.$$
\end{proof}

\ \\ \noindent {\it Proof of the theorem.} We first show that for all $(t_n) \subseteq (0,1)$ such that $\lim\limits_{n \to \infty} t_n = 1$, there exist a $p \in Fix(T)$ and $(n_k)$, strictly increasing, such that $(x_{t_{n_k}}) \to p$. Put, for all $n$, $x_n:=x_{t_n}$.  Then, for all $n$,
$$\|x_n - Tx_n\| = \|t_nTx_n + (1-t_n)x - Tx_n\| = \|(1-t_n)(x-Tx_n)\| \leq (1-t_n)b,$$
so $\lim_{n \to \infty} \|x_n -Tx_n \| = 0$ and therefore (by Corollary~\ref{cor-ht}.(ii)) $\lim_{n \to \infty} \|x_n -h_Tx_n \| = 0$. Define now $F: C \to \R^+$, for all $z \in C$, by $F(z):=\limsup_{n \to \infty} \|x_n-z\|$. Let $K$ be the set of minimizers of $F$.\\[1mm]

\noindent {\bf Claim.} There is a $p \in K \cap Fix(T)$.\\
{\bf Proof of claim:} Since $F$ is convex and continuous, $C$ is closed convex bounded nonempty, and $X$ is uniformly smooth, hence reflexive, we have that $K \neq \emptyset$.  Let $y \in K$ and $z \in C$. Then:
\begin{align*}
F(h_Ty) = \limsup_{n \to \infty} \|x_n - h_Ty\| &\leq \limsup_{n \to \infty}(\|x_n - h_Tx_n\| + \|h_Tx_n - h_Ty\|) \\
&\leq \limsup_{n \to \infty}(\|x_n - h_Tx_n\| + \|x_n - y\|) \\
&\leq \limsup_{n \to \infty}\|x_n - h_Tx_n\| + \limsup_{n \to \infty}\|x_n - y\| \\
&= F(y) \leq F(z),
\end{align*}
so $h_Ty \in K$. Since $K$ is a closed convex bounded nonempty subset of a uniformly smooth space, and it is invariant under the action of the nonexpansive mapping $h_T$, we have that there is a $p \in K \cap Fix(h_T)$, so by Corollary~\ref{cor-ht}.(iv), $p \in K \cap Fix(T)$.
\hfill $\blacksquare$\\[2mm]

We only sketch the remainder of the proof, since the details that will actually be used shall be given later. Let $\eps > 0$ and put $r:= x-p$. Using the continuity of $j$, let $\mu\in(0,1)$ be small enough such that for any $n$, $\langle r, j(x_n-p)\rangle \leq \eps + \langle r,j(x_n-p-\mu r)\rangle$. (Note that $p+\mu r =(1-\mu)p+ \mu x \in C$.) By Lemma~\ref{jl}, we have that $\|x_n-p-\mu r\|^2 \leq \|x_n-p\|^2 + 2\langle -\mu r, j(x_n - p - \mu r)\rangle$. Summing up, we get that
$$\langle r, j(x_n-p) \rangle \leq \eps + \frac1{2\mu} (\|x_n -p \|^2 - \|x_n - p - \mu r\|^2),$$
so by Lemma~\ref{liminf}, $\liminf_{n\to\infty} \langle r, j(x_n-p) \rangle \leq \eps$. Since $\eps$ was chosen arbitrarily and $r=x-p$, we easily get that there is an $(n_k)$, strictly increasing, such that $\limsup_{k \to \infty} \langle x - p, j(x_{n_k} - p) \rangle \leq 0$. We use that $T$ is a pseudocontraction to derive that for any $n$,
\begin{align*}
\langle x_n-x, j(x_n - p) \rangle &= t_n \langle Tx_n - Tp, j(x_n-p) \rangle + t_n \langle p-x, j(x_n -p )\rangle \\
&\leq t_n \|x_n-p\|^2 + t_n \langle p-x, j(x_n - p) \rangle \\
&=t_n\langle x_n-x, j(x_n - p) \rangle,
\end{align*}
so that $\langle x_n - x, j(x_n -p)\rangle \leq 0$, which we sum up with the previous inequality to get that $(x_{t_{n_k}}) \to p$.

To obtain the convergence of $(x_{t_n})$ for any (suitable, from now on) sequence $(t_n)$, it is clear that we need only to show that there is a $p$ such that for any $(t_n)$ there is an $(n_k)$ such that $(x_{t_{n_k}}) \to p$. Fix a canonical sequence, say $s_m:= 1-\frac1{m+1}$, for any $m$. By the previous argument, we have that there is an $(m_l)$ and a $p \in Fix(T)$ such that $(x_{s_{m_l}}) \to p$. Now consider a sequence $(t_n)$. Again, by the above, there is an $(n_k)$ and a $q \in Fix(T)$ such that $(x_{t_{n_k}}) \to q$. What remains to be shown is that $p=q$. Let $\eps>0$. Put, for all $k$, $x_k = x_{t_{n_k}}$. Let $k_0$ be big enough such that $\|x_{k_0} - q\|  \leq \eps^2/4b$ and that (again, using the continuity of $j$) $\langle x_{k_0} - x , j(q-p)-j(x_{k_0} - p) \rangle \leq \eps^2/4$. Using arguments like before, we get that $\langle q-x, j(q-p) \rangle \leq \eps^2/2$ and similarly  $\langle p-x, j(p-q) \rangle \leq \eps^2/2$, so $\|p-q\|\leq\eps$.
\hfill $\Box$\\[2mm]

It is now clear that the least elementary principles are used in the Claim, where appeal is made to results of Banach space theory as established in a set-theoretic framework. From the point of a quantitative analysis the most difficult 
argument is the proof of $K\not=\emptyset$ using the reflexivity of $X.$ This 
can be avoided as follows.
In the light of the conclusion of the theorem, it is immediate that the function $F$ has a unique minimizer, namely the limit of $(x_{t_n})$, so a viable idea would be to get to this uniqueness in an {\it a priori} way. This is where the additional hypothesis of uniform convexity helps us, through Proposition~\ref{l3-a}, which gives a modulus of uniform convexity for the squared norm -- and thus also for $F$ -- that acts as a modulus of minimizer uniqueness for $F$ which 
allows one to show the existence of an actual minimizer as the limit of 
approximate minimizers, with this modulus providing a rate of convergence. 
Let us see how these concepts come into play.

\ \\ \noindent {\it Second proof of the Claim.} 
We divide this proof into a series of claims.\\[1mm]

\noindent {\bf Claim 1.} For all $\eps > 0$ there is a $y \in C$ such that for all $z \in C$:
\begin{itemize}
\item $\limsup\limits_{n \to \infty} \|x_n-y\|^2 \leq \limsup\limits_{n \to \infty} \|x_n-z\|^2 + \eps$;
\item $\limsup\limits_{n \to \infty} \|x_n-h_Ty\|^2 \leq \limsup\limits_{n \to \infty} \|x_n-z\|^2 + \eps$.
\end{itemize}
{\bf Proof of claim:} As before, we have that $\lim_{n \to \infty} \|x_n -h_Tx_n \| = 0$. Suppose that for all $y \in C$ there is a $z \in C$ such that
$$\limsup\limits_{n \to \infty} \|x_n-y\|^2 > \limsup\limits_{n \to \infty} \|x_n-z\|^2 + \eps.$$
Let $\hat{y} \in C$ and put $K:= \left\lceil \frac{b}{\eps} \right\rceil$. Put, then, $f_1:=\hat{y}$ and recursively for all $i \in \{1,\ldots,K\}$ put $f_{i+1}$ such that
$$\limsup\limits_{n \to \infty} \|x_n-f_i\|^2 > \limsup\limits_{n \to \infty} \|x_n-f_{i+1}\|^2 + \eps.$$
Therefore,
$$ b \geq \limsup\limits_{n \to \infty} \|x_n-f_1\|^2 > \limsup\limits_{n \to \infty} \|x_n-f_{K+1}\|^2 + K\eps \geq K\eps \geq b,$$
which is a contradiction. Thus, there is a $y \in C$ such that for all $z \in C$
$$\limsup\limits_{n \to \infty} \|x_n-y\|^2 \leq \limsup\limits_{n \to \infty} \|x_n-z\|^2 + \eps.$$
Now, we have, again as before, that
$$\limsup\limits_{n \to \infty} \|x_n-h_Ty\| \leq  \limsup\limits_{n \to \infty}\|x_n-y\|,$$
so, for all $z \in C$,
$$\limsup\limits_{n \to \infty} \|x_n-h_Ty\| ^2 \leq \limsup\limits_{n \to \infty} \|x_n-y\| ^2 \leq \limsup\limits_{n \to \infty} \|x_n-z\|^2 + \eps.$$
\hfill $\blacksquare$\\[2mm]

\noindent {\bf Claim 2.} For all $\eps > 0$ there is a $u \in C$ such that:
\begin{itemize}
\item for all $z \in C$, $\limsup\limits_{n \to \infty} \|x_n-u\|^2 \leq \limsup\limits_{n \to \infty} \|x_n-z\|^2 + \eps$;
\item $\|u-Tu\| \leq \eps$.
\end{itemize}
{\bf Proof of claim:} Take $\eta_1:= \min\left(\eps, \frac12 \psi_{b,\eta}(\tilde{\theta}(\eps)) \right) > 0$. Apply Claim 1 for $(t_n)$ and $\eta_1$ and put $u$ to be the resulting $y$. We have only to show that $\|u-h_Tu\|\leq \tilde{\theta}(\eps)$, since from that, using Corollary~\ref{cor-ht}.(iii), it follows that $\|u-Tu\|\leq\eps$. Suppose not. Then, for all $n$,
$$\|(x_n-u) - (x_n-h_Tu)\| = \|u-h_Tu\| \geq \tilde{\theta}(\eps),$$
so, for all $n$, by Proposition~\ref{l3-a},
$$\left\|x_n - \left( \frac{u+h_Tu}2 \right) \right\|^2 + \psi_{b,\eta}(\tilde{\theta}(\eps)) \leq \frac12 \|x_n-u\|^2  + \frac12 \|x_n-h_Tu\|^2.$$
Then, applying the defining property of $u$, we get that
\begin{align*}
\limsup\limits_{n \to \infty} \left\|x_n-\left( \frac{u+h_Tu}2 \right) \right\|^2 + \psi_{b,\eta}(\tilde{\theta}(\eps)) &\leq \frac12 \limsup\limits_{n \to \infty} \|x_n-u\|^2 + \frac12\limsup\limits_{n \to \infty} \|x_n-h_Tu\|^2 \\
&\leq \limsup\limits_{n \to \infty} \left\|x_n-\left( \frac{u+h_Tu}2 \right) \right\|^2  + \eta_1,
\end{align*}
which is a contradiction, since $0 < \eta_1 \leq \frac12 \psi_{b,\eta}(\tilde{\theta}(\eps)) $.
\hfill $\blacksquare$\\[2mm]

Again, we only sketch the remainder of this proof. For any $m\in\N^*$, we fix a $u_m$ such that for all $z \in C$, $\limsup_{n \to \infty} \|x_n-u_m\|^2 \leq \limsup_{n \to \infty} \|x_n-z\|^2 + 1/m$ and $\|u_m-Tu_m\| \leq 1/m$. We show that $(u_m)$ is Cauchy. Let $\eps > 0$ and let $m$, $p \geq \lceil 2/\psi_{b,\eta}(\eps) \rceil$. Assume that $\|u_m - u_p\| > \eps$. Then, for all $n$, using Proposition~\ref{l3-a} as before,
\begin{align*}
\limsup\limits_{n \to \infty} \left\|x_n-\left( \frac{u_m+u_p}2 \right) \right\|^2 + \psi_{b,\eta}(\eps) &\leq \frac12 \limsup\limits_{n \to \infty} \|x_n-u_m\|^2 + \frac12\limsup\limits_{n \to \infty} \|x_n-u_p\|^2 \\
&\leq \limsup\limits_{n \to \infty} \left\|x_n-\left( \frac{u_m+u_p}2 \right) \right\|^2  + \frac12 \psi_{b,\eta}(\eps),
\end{align*}
which is a contradiction. It is then immediate that the limit of $(u_m)$ satisfies our requirements.
\hfill $\Box$\\[2mm]

The next principles we can now remove from the proof are the ones that allowed us, for example, to pass to the limit in the argument above. What we do is to show that the approximate solutions obtained in Claim 2 are enough for the whole line of argument to go through, by essentially removing any ideal point that would appear in the course of the proof by an approximate one. This is made possible again by the use of Proposition~\ref{l3-a}, asserting  that two $\delta$-infima of $F$, for sufficiently small $\delta$, must be $\varepsilon$-close. Also, it is now clear that the resulting proof does no longer use the existence of $F$ as a function but only the existence of the individual limsup's in the form 
$$\forall z\in C\,\exists a\in\R^+ \ (a=\limsup_{n\to\infty} \| x_{t_n}-z\|^2).$$
As a result of this, the proof may be formalized in a deductive system to which the logical bound extraction theorems, mentioned in the Introduction, apply -- which is not clear if $F$ would be needed as an object (see also Remark~\ref{rem-logic1} below).

\ \\ \noindent {\it Proof of the theorem using only the aforementioned principles.}
We presuppose the truth of Claim 2 in the previous proof, i.e. that for all $(t_n) \subseteq (0,1)$ such that $\lim\limits_{n \to \infty} t_n = 1$ and for all $\eps > 0$ there is a $u \in C$ such that:
\begin{itemize}
\item for all $z \in C$, $\limsup\limits_{n \to \infty} \|x_{t_n}-u\|^2 \leq \limsup\limits_{n \to \infty} \|x_{t_n}-z\|^2 + \eps$;
\item $\|u-Tu\| \leq \eps$.
\end{itemize}
Thus, we shall start the numbering of claims at 3.\\[1mm]

\noindent {\bf Claim 3.} For all $(t_n) \subseteq (0,1)$ such that $\lim\limits_{n \to \infty} t_n = 1$ and for all $\eps > 0$ there is a $v \in C$ such that:
\begin{itemize}
\item for all $z \in C$, $\limsup\limits_{n \to \infty} \|x_{t_n}-v\|^2 \leq \limsup\limits_{n \to \infty} \|x_{t_n}-z\|^2 + \eps$;
\item for all $t\in(0,1)$, $\langle x_t-x, j(x_t-v) \rangle \leq \eps$.
\end{itemize}
{\bf Proof of claim:} Take $\eta_2:=\min\left(\eps, \frac12\psi_{b,\eta}\left(\omega_\tau\left(b,\frac{\eps}{2b}\right)\right)\right).$

Apply Claim 2 for $(t_n)$ and $\eta_2$ and put $v$ to be the resulting $u$.

We have to show that for all $t \in (0,1)$, $\langle x_t-x, j(x_t-v) \rangle \leq \eps$. Let $t \in (0,1)$ and put $\delta:=\min\left(\eta_2,\frac{\eps(1-t)}{2b}\right)$. Apply Claim 2 for $(t_n)$ and $\delta$ and put $v'$ to be the resulting $u$, so in particular $\|v'-Tv'\|\leq\delta$. We then obtain that
\begin{align*}
\langle x_t-x, j(x_t-v') \rangle &= \langle t(Tx_t-x), j(x_t-v') \rangle \\
&=t\langle Tx_t -Tv', j(x_t-v') \rangle + t \langle Tv'-v', j(x_t -v') \rangle + t\langle v'-x, j(x_t - v') \rangle \\
&\leq t\|x_t-v'\|^2 + t \|Tv'-v'\| \|x_t-v'\| + t\langle v'-x, j(x_t - v') \rangle \\
&\leq t\|x_t-v'\|^2  + t\langle v'-x, j(x_t - v') \rangle + t\delta b \\
&\leq t\langle x_t-v', j(x_t-v') \rangle + t \langle v'-x,j(x_t -v') \rangle + \delta b \\
&= t \langle x_t - x, j(x_t - v') \rangle + \delta b,
\end{align*}
from which we get that
\begin{equation}\label{e3.1}
\langle x_t-x, j(x_t-v') \rangle \leq \frac{\delta b}{1-t} \leq \frac{\eps}2.
\end{equation}
Suppose that $\|v-v'\| \geq \omega_\tau(b,\frac{\eps}{2b})$, so, for all $n$,
$$\left\|x_n - \frac{v+v'}2\right\|^2 + \psi_{b,\eta}\left(\omega_\tau\left(b,\frac{\eps}{2b}\right)\right) \leq \frac12\|x_n-v\|^2 + \frac12\|x_n-v'\|^2.$$
Then
\begin{align*}
\limsup\limits_{n \to \infty} \left\|x_n-\left( \frac{v+v'}2 \right) \right\|^2 + \psi_{b,\eta}\left(\omega_\tau\left(b,\frac{\eps}{2b}\right)\right)  &\leq \frac12 \limsup\limits_{n \to \infty} \|x_n- v\|^2 + \frac12\limsup\limits_{n \to \infty} \|x_n-v'\|^2 \\
&\leq \limsup\limits_{n \to \infty} \left\|x_n-\left( \frac{v+v'}2 \right) \right\|^2  + \eta_2,
\end{align*}
which is a contradiction. So $\|v-v'\| \leq \omega_\tau(b,\frac{\eps}{2b})$, i.e. $\|(x_t-v)-(x_t-v')\| \leq \omega_\tau(b,\frac{\eps}{2b})$. From that we obtain
$$\|j(x_t-v)-j(x_t-v')\| \leq \frac{\eps}{2b}$$
and
\begin{equation}\label{e3.2}
\langle x_t-x, j(x_t-v)-j(x_t-v') \rangle \leq \frac{\eps}{2b} \cdot b = \frac\eps2.
\end{equation}
From \eqref{e3.1} and \eqref{e3.2} we derive our conclusion.
\hfill $\blacksquare$\\[2mm]

\noindent {\bf Claim 4.} For all $(t_n) \subseteq (0,1)$ such that $\lim\limits_{n \to \infty} t_n = 1$ and for all $\eps > 0$ there is a $w \in C$ such that:
\begin{itemize}
\item for all $t\in(0,1)$, $\langle x_t-x, j(x_t-w) \rangle \leq \eps$;
\item there exists $(n_k)$, strictly increasing, such that $\limsup\limits_{k \to \infty} \|x_{t_{n_k}} - w\|^2 \leq \eps$.
\end{itemize}
{\bf Proof of claim:} Put $\mu:=\min\left(\frac{\omega_\tau\left(b,\frac\eps{3b}\right)}b, \frac14\right)$ and $\eta_3:=\min\left(\frac\eps3, 2\mu\cdot\frac\eps3\right).$

Apply Claim 3 for $(t_n)$ and $\eta_3$ and put $w$ to be the resulting $v$. Denote, for all $n$, $x_n:=x_{t_n}$. We have that, for all $n$,
\begin{equation}\label{e4.3}
\langle x_n-x, j(x_n-w) \rangle \leq \frac\eps3.
\end{equation}
Put $q:=x-w$. Since $\mu \in (0,1)$, $w+\mu q = (1-\mu)w+\mu x \in C$. By Lemma~\ref{jl}, we have that
\begin{equation}\label{e4.1}
\|x_n -w-\mu q\|^2 \leq \|x_n-w\|^2 + 2\langle -\mu q, j(x_n-w-\mu q) \rangle.
\end{equation}
Since
$$\|(x_n-w)-(x_n-w-\mu q)\| = \|\mu q\| \leq \mu b \leq \omega_\tau\left(b,\frac\eps{3b} \right),$$
we have that
$$\|j(x_n-w)-j(x_n-w-\mu q)\| \leq \frac\eps{3b},$$
and so that
\begin{equation}\label{e4.2}
\langle q, j(x_n-w) \rangle \leq \frac\eps3 + \langle q, j(x_n-w-\mu q) \rangle.
\end{equation}
From \eqref{e4.1} and \eqref{e4.2} we get that
$$\langle q, j(x_n-w) \rangle \leq \frac\eps 3 + \frac1{2\mu} (\|x_n-w\|^2 - \|x_n-w-\mu q\|^2).$$
Applying Lemma~\ref{liminf}, we obtain that
\begin{align*}
\liminf_{n \to \infty} \langle q, j(x_n-w) \rangle &\leq \frac\eps3 + \frac1{2\mu}\left(\limsup_{n \to \infty} \|x_n-w\|^2 - \limsup_{n\to\infty}\|x_n-w-\mu q\|^2\right) \\
&\leq \frac\eps3 + \frac1{2\mu} \cdot 2\mu \cdot \frac\eps3 = \frac{2\eps}3,
\end{align*}
and therefore that there exists $(n_k)$, strictly increasing, such that 
$$\lim\limits_{k \to \infty} \langle q, j(x_{n_k}-w) \rangle \leq \frac{2\eps}3,$$
so in particular, noting also that $q=x-w$,
$$\limsup\limits_{k \to \infty} \langle x-w, j(x_{n_k}-w) \rangle \leq \frac{2\eps}3. $$
Using \eqref{e4.3}, we derive $\limsup\limits_{k \to \infty} \|x_{n_k} - w\|^2 \leq \eps$, i.e. our conclusion.
\hfill $\blacksquare$\\[2mm]

\noindent {\bf Claim 5.} For all $\eps>0$ there is a $g \in C$ such that for all $(t_n) \subseteq (0,1)$ with $\lim\limits_{n \to \infty} t_n = 1$, there exists $(n_k)$, strictly increasing, such that
$$\limsup\limits_{k \to \infty} \|x_{t_{n_k}} - g\|\leq \eps.$$
{\bf Proof of claim:} Put
$$\eta_4:=\min \left\{ \frac{\eps^2}{24}, \frac{\eps^4}{48^2b^2}, \frac{\omega_\tau^2\left(b,\frac{\eps^2}{24b}\right)}4 \right\}$$
and, for all $m$, $s_m:=1-\frac1{m+1}$.

Apply Claim 4 for $(s_m)$ and $\eta_4$ and put $g$ to be the resulting $w$. In particular, there is $(m_l)$, strictly increasing, such that $\limsup\limits_{l \to \infty} \|x_{s_{m_l}} - g\|^2 \leq \eta_4$.

Let now $(t_n)$ be chosen arbitrarily such that $\lim\limits_{n \to \infty} t_n = 1$. Apply Claim 4 for $(t_n)$ and $\eta_4$ and put $g'$ to be the resulting $w$. In particular, there is $(n_k)$, strictly increasing, such that $\limsup\limits_{k \to \infty} \|x_{t_{n_k}} - g'\|^2 \leq \eta_4$.

We have that for all $k$,
$$\langle x_{t_{n_k}}-x, j(x_{t_{n_k}} -g ) \rangle \leq \eta_4 \leq \frac{\eps^2}{24}.$$

Take a $k_0$ sufficiently large such that
$$\|x_{t_{n_{k_0}}} - g'\| \leq \limsup_{k \to \infty} \|x_{t_{n_k}} - g'\| + \min\left(\frac{\eps^2}{48b},\frac{\omega_\tau\left(b,\frac{\eps^2}{24b}\right)}2\right).$$
We have that
$$\|x_{t_{n_{k_0}}} - g'\| \leq \frac{\eps^2}{48b} + \frac{\eps^2}{48b} = \frac{\eps^2}{24b}$$
and that
$$\|(x_{t_{n_{k_0}}} - g) - (g'-g) \| = \|x_{t_{n_{k_0}}} - g'\| \leq \frac{\omega_\tau\left(b,\frac{\eps^2}{24b}\right)}2 + \frac{\omega_\tau\left(b,\frac{\eps^2}{24b}\right)}2 = \omega_\tau\left(b,\frac{\eps^2}{24b}\right),$$
so
$$\|j(x_{t_{n_{k_0}}} - g) - j(g'-g) \| \leq \frac{\eps^2}{24b}.$$
Therefore
\begin{align*}
\langle g'-x, j(g'-g) \rangle &\leq \langle g' - x_{t_{n_{k_0}}}, j(g'-g) \rangle + \langle x_{t_{n_{k_0}}} - x, j(g'-g) \rangle \\
&\leq b \cdot \|x_{t_{n_{k_0}}}-g'\| + \langle x_{t_{n_{k_0}}} - x, j(x_{t_{n_{k_0}}} - g) \rangle \\
&\quad+ \langle x_{t_{n_{k_0}}} - x, j(g'-g) - j(x_{t_{n_{k_0}}}-g) \rangle \\
&\leq b \cdot \frac{\eps^2}{24b} + \frac{\eps^2}{24} + b\cdot \frac{\eps^2}{24b} = \frac{\eps^2}8.
\end{align*}
Similarly, we obtain that
$$\langle g-x, j(g-g') \rangle \leq \frac{\eps^2}8.$$
Summing up, we get that $\langle g-g', j(g-g')\rangle \leq \frac{\eps^2}4$, so $\|g-g'\| \leq \frac\eps2$.
Since $\limsup_{k \to \infty} \|x_{t_{n_k}} - g'\|^2 \leq \frac{\eps^2}{24} \leq \frac{\eps^2}4$, we have
$$\limsup_{k \to \infty}\|x_{t_{n_k}} -g\| \leq \limsup_{k \to \infty}\|x_{t_{n_k}} -g'\| + \|g - g'\| \leq \frac\eps2+ \frac\eps2 = \eps,$$
i.e. our conclusion.\hfill $\blacksquare$\\[2mm]

\noindent {\bf Claim 6.} For all $\eps>0$ there is an $h \in C$ such that for all $(t_n) \subseteq (0,1)$ with $\lim\limits_{n \to \infty} t_n = 1$, we have that
$$\limsup\limits_{n \to \infty} \|x_{t_n} - h\|\leq \eps.$$
{\bf Proof of claim:} Apply Claim 5 for $\eps$ and put $h$ to be the resulting $g$. Let now $(t_n)$ be chosen arbitrarily such that $\lim\limits_{n \to \infty} t_n = 1$.

Suppose that $\limsup\limits_{n \to \infty} \|x_{t_n} - h\|>\eps$. Then there is an $\eta > 0$ such that for all $N$ there is an $n \geq N+1$ such that $\|x_{t_n} - h\| > \eps + \eta$, so there is an $(n_k)$, strictly increasing, such that for all $k$, $\|x_{t_{n_k}} - h\| > \eps + \eta$. By the defining property of $h$, we get that there is a $(k_l)$, strictly increasing, such that
$$\limsup\limits_{n \to \infty} \|x_{t_{n_{k_l}}} - h\|\leq \eps,$$
so there is an $L$ such that for all $l \geq L$,
$$\|x_{t_{n_{k_l}}} - h\| \leq \eps + \eta,$$
which contradicts the defining property of $(n_k)$.\hfill $\blacksquare$\\[2mm]

\noindent {\bf Claim 7.} For all $(t_n) \subseteq (0,1)$ with $\lim\limits_{n \to \infty} t_n = 1$, we have that the sequence $(x_{t_n})$ is Cauchy.\\[1mm]
{\bf Proof of claim:} Denote, for all $n$, $x_n:=x_{t_n}$. We want to show that for all $\eps >0$ there is an $N$ such that for all $m,n \geq N$, $\|x_n - x_m \| \leq \eps$. Let $\eps > 0$. By applying Claim 6 for $\frac\eps4$, we obtain an $h \in C$ having the property that there is an $N$ such that for all $n \geq N$,
$$\|x_n - h\| \leq \frac\eps4 + \frac\eps4 = \frac\eps2.$$
Take $n, m \geq N$. Then
$$\|x_n - x_m \| \leq \|x_n - h\| + \|x_m -h\| \leq \frac\eps2 + \frac\eps2 = \eps,$$
i.e. our conclusion.\hfill $\blacksquare$\\[2mm]

This last claim is exactly our desired statement.
\hfill $\Box$

\begin{remark}[for logicians; we use the terminology from \cite{Koh08}]\label{rem-logic1} \rm
An inspection of the proof of the Cauchy property and hence of the 
metastability of $(x_{t_n})$ in this 
section  shows that it can be carried out in the formal system WE-PA$^{\omega}[X,\|\cdot\|,\eta,J_X,\omega_X,C]+$CA$_{ar}$ 
where   WE-PA$^{\omega}[X,\|\cdot\|,\eta,J_X,\omega_X,C]$ is
 defined as in \cite[(17.68)]{Koh08} and then augmented by the normalized duality mapping $J_X$ and the modulus of 
uniform smoothness $\omega_X$ as 
in \cite{KohLeu12}. CA$_{ar}$ denotes the schema 
of arithmetic comprehension which is needed to show the existence of 
$\limsup\limits_{n\to\infty}  \| x_{t_n}-y\|^2.$ From the 
logical metatheorems in \cite{Koh08,KohLeu12} and Theorems 11.11 and 11.13 in 
\cite{Koh08} it follows that one can extract a rate of metastability for 
the Cauchy property of $(x_{t_n})$ which is definable in G\"odel's calculus 
$T$ of primitive recursive functionals of finite type augmented by Spector's 
bar recursion $B_{0,1}$ of lowest type. In the next section we will show that 
even the use of $B_{0,1}$ can be avoided.
\end{remark}

\section{The proof using approximate limsup's}\label{section-approx}

In this section we, in particular, show that the use of limsup's can be replaced by that of 
$\varepsilon$-limsup's whose existence can be established by induction 
(for logicians: $\Pi^0_2$-induction). As a result of this, the proof can 
even be formalized without arithmetic comprehension and so the extractability 
of a primitive recursive (in the sense of G\"odel's $T$) rate of metastability 
is guaranteed (see Remark~\ref{rem-logic1}). We also exhibit the finitary 
content of the actual use of approximate limsup's made in the proof.
\begin{remark}\label{rem-ar} \rm
The process of eliminating $\limsup$'s by an arithmetical principle in this 
section is in line with {\rm \cite[Proposition 
5.9]{Koh97A}} where such an arithmetization of the use of limsup's to $T_1$ is shown to 
be possible in a certain restrictive deductive context and by  
{\rm \cite[Theorem 6.1]{Koh00}} it is optimal. In {\rm \cite[Section 17.9]{Koh08}}, the 
approach is shown to work also within the framework of abstract spaces.
In our context, however, we cannot directly apply these results as 
the limsup's are used in the presence of e.g. inductions going beyond 
quantifier-free induction. However, as usual in the context of 
ordinary proofs, the arithmetization can nevertheless  
be carried out without problems and we suspect that this could be 
explained in terms of logical metatheorems by treating the inductions 
used as implicative assumptions and using that the method behind 
these arithmetizations works for arbitrary (arithmetical) formulas as 
long as certain monotonicity conditions are fulfilled (see {\rm \cite{Koh98}}).
Nevertheless, we leave this for future research to clarify. 
\end{remark}
\subsection{The arithmetized version of limits superior}

\begin{definition}
Let $(a_n)$ be a sequence of reals and $\eps > 0$. A number $a \in \R$ is called an {\bf $\eps$-approximate limsup} (or simply an {\bf $\eps$-limsup}) for $(a_n)$ if:
\begin{itemize}
\item for all $n$ there is an $m$ such that $a_{n+m} \geq a-\eps$;
\item there is a $j$ such that for all $l$, $a_{j+l} \leq a+\eps$.
\end{itemize}
\end{definition}

What makes approximate limsup's suitable for proof mining is that they admit an existence proof which uses only $\Pi^0_2$-induction.

\begin{proposition}[$\Pi^0_2$-IA]\label{exlimsup}
For all $b, k \in \N$ and for all sequences of reals $(a_n)$ contained in the interval $[0,b]$, there is a $p \in \N$ with $0 \leq p \leq b\cdot (k+1)$ such that $\frac{p}{k+1}$ is a $\frac1{k+1}$-limsup of $(a_n)$.
\end{proposition}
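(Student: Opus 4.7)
My plan is to obtain $p$ by bounded maximization applied to a suitable $\Pi^0_2$ predicate on the range $\{0,1,\ldots,b(k+1)\}$. The natural candidate is
\[
V(j) \;:\equiv\; \forall n \, \exists m \, \left( a_{n+m} \geq \tfrac{j}{k+1} \right),
\]
which is antitone in $j$ and, since $(a_n) \subseteq [0,b]$, satisfies $V(0)$ trivially (taking $m:=0$) while $V(b(k+1)+1)$ is outright false. I will then set $p := j_0 - 1$, where $j_0$ is the least natural with $\neg V(j_0)$.

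The first step is to justify the existence of such a $j_0$: this is where $\Pi^0_2$-IA enters, via the equivalent $\Sigma^0_2$ least-number principle applied to the $\Sigma^0_2$ predicate $\neg V$. The bounds $V(0)$ and $\neg V(b(k+1)+1)$ pin $j_0$ into $\{1,\ldots,b(k+1)+1\}$, so $p$ lies in the required range $\{0,\ldots,b(k+1)\}$, and by minimality of $j_0$ one has both $V(p)$ and $\neg V(p+1)$.

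The second step is to check that $p/(k+1)$ meets the definition of a $\tfrac{1}{k+1}$-limsup. The first clause follows directly from $V(p)$, since $p/(k+1) \geq p/(k+1) - 1/(k+1)$. From $\neg V(p+1)$ one obtains $\exists n\,\forall m\,(a_{n+m} < (p+1)/(k+1))$, and passing from strict to non-strict inequality yields the second clause with witness $j := n$.

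The main obstacle, such as it is, is purely logical rather than mathematical: recognizing that $V$ is $\Pi^0_2$ and that the bounded-maximization step lies exactly at the edge of what $\Pi^0_2$-IA permits. Everything else reduces to rearranging quantifiers in the definitions, and in particular neither arithmetic comprehension nor the classical $\limsup$ is invoked --- which is precisely the point of the arithmetization being carried out in this section.
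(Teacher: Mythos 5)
Your strategy is essentially the paper's: take the antitone predicate asserting that the sequence exceeds the level $j/(k+1)$ infinitely often, use induction (in its least-number-principle guise, which is indeed equivalent to $\Pi^0_2$-IA) to locate the point where it switches from true to false, and read off the two clauses of the approximate limsup from $V(p)$ and $\neg V(p+1)$. The paper phrases the first step as the contrapositive of the induction axiom --- there must be a $p \leq b\cdot(k+1)$ at which the induction step $Q(p)\to Q(p+1)$ fails --- rather than as bounded minimization of $\neg V$, but the two are interchangeable here; the paper's formulation is the one that is subsequently given a direct functional interpretation in Proposition~\ref{exlimsup-nd}.

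There is, however, one genuine slip in the complexity bookkeeping, and it matters because the entire content of this proposition is that $\Pi^0_2$-induction suffices. For a sequence of \emph{reals}, the relation $a_{n+m} \geq q$ against a rational $q$ is itself a $\Pi^0_1$ formula (whereas $a_{n+m} > q$ is $\Sigma^0_1$), so your
$V(j) \equiv \forall n\,\exists m\,\bigl(a_{n+m} \geq \tfrac{j}{k+1}\bigr)$
has quantifier shape $\forall\exists\forall$ and is $\Pi^0_3$, not $\Pi^0_2$; the least-number principle you invoke for the $\Sigma^0_3$ predicate $\neg V$ would then correspond to $\Pi^0_3$-IA. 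This is precisely why the paper's inductive predicate $Q(p)$ uses the strict inequality $a_{j+l} > \tfrac{p-1}{k+1}$ --- making the matrix $\Sigma^0_1$ and $Q$ genuinely $\Pi^0_2$ --- and only afterwards weakens the resulting strict bound to the non-strict one $a_{n+m} \geq \tfrac{p}{k+1}-\tfrac{1}{k+1}$ required by the definition of a $\tfrac{1}{k+1}$-limsup. Your argument is repaired by the same substitution, i.e. taking $V(j) :\equiv \forall n\,\exists m\,\bigl(a_{n+m} > \tfrac{j-1}{k+1}\bigr)$, after which every step you wrote goes through unchanged.
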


\begin{proof}
Let $b$, $k$ and $(a_n)$ be as in the statement.\\[1mm]

\noindent {\bf Claim.} There is a $p \in \N$ with $0 \leq p \leq b\cdot (k+1)$ such that it is not the case that for all $j$ there is an $l$ with $a_{j+l} > \frac{p-1}{k+1}$ implies that for all $j$ there is an $l$ with $a_{j+l} > \frac{p}{k+1}$.\\[1mm]
{\bf Proof of claim:} Assume towards a contradiction that the opposite holds, i.e. for all natural numbers $p$ smaller or equal to $b \cdot (k+1)$, we have that $Q(p)$ implies $Q(p+1)$, where $Q(p)$ is the $\Pi^0_2$ statement that for all $j$ there is an $l$ such that $a_{j+l} > \frac{p-1}{k+1}$. Since $Q(0)$ holds trivially, we have by $\Pi^0_2$-induction that $Q(b \cdot (k+1) +1)$. But that states that for all $j$ there is an $l$ such that $a_{j+l} >b$, clearly false.
\hfill $\blacksquare$\\[0.5mm]

Take $p$ as in the Claim. Then $0 \leq p \leq b\cdot (k+1)$ and:
\begin{enumerate}[(i)]
\item for all $n$ there is an $m$ such that $a_{n+m} > \frac{p-1}{k+1}$, so $ a_{n+m}\geq  \frac{p}{k+1} - \frac1{k+1} $,
\item there is a $j$ such that for all $l$, $a_{j+l} \leq  \frac{p}{k+1}$, so $a_{j+l} \leq  \frac{p}{k+1}+\frac1{k+1}$,
\end{enumerate}
i.e.  $ \frac{p}{k+1}$ is a $\frac1{k+1}$-limsup of $(a_n)$.
\end{proof}

\begin{remark} \rm
One can even show, as mentioned in the Introduction, that this statement is equivalent to $\Pi^0_2$-induction. To do that, we tweak the argument used in the proof of \cite[Theorem 6.1]{Koh00}, whose statement affirms that the existence of limsup's (without function parameters) implies $\Pi^0_2$-induction, to also work with rational approximate limsup's, i.e. in the form given in Proposition~\ref{exlimsup}. The limsup hypothesis is used two times: once in Claims 1-3 and once when it yields $\Sigma^0_1$-induction as the first stage of a bootstrapping process. The second application does not pose any serious problems, while the first one is a bit more involved, since the statements of Claims 1-3 must be adjusted. Set, for any $k \in \N^*$, $L(k):=4k(k+1)>0$. One then requires in Claims 2 and 3 from $a$ to be a rational $\frac1{L(k)}$-limsup and a rational $\frac1{L(k+1)}$-limsup of $(q^f_n)$, respectively, while the new Claim 1 states that for any $k, p\in \N^*$ with $p \leq k$ and for any rational $a \in [0,1]$ which is a $\frac1{L(k)}$-limsup of $(q^f_n)$, the following are equivalent:
\begin{enumerate}[(i)]
\item $a \geq \frac1p - \frac1{L(k)}$;
\item $a > \frac1{p+1} + \frac1{L(k)}$;
\item for all $n$ there is an $m \geq n$ with $f(m) < p$.
\end{enumerate}
The proof then goes through.
\end{remark}

It is not sufficient that one can prove the existence of approximate limsup's, we must also show that they can play the role that is required of them. The following lemma is crucial in this regard, as it proves that one can extract specific sequence ranks that are needed in a later analysis of a proof, whereas the values of the approximate limsup's may be discarded.

\begin{lemma}\label{lpe}
Let $\eps > 0$. Let $(a_n)$, $(b_n)$ and $(c_n)$ be sequences of reals and $q$, $q'$ and $r$ be $\frac\eps4$-limsup's of them, respectively. If $q \leq r+\frac\eps2$ and $q'\leq r+\frac\eps2$, then for all $N$ there is a $k$ such that $a_{N+k} \leq c_{N+k} + \eps$ and $b_{N+k} \leq c_{N+k} + \eps$.
\end{lemma}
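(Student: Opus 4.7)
My plan is to unpack the definition of $\eps/4$-limsup for each of the three sequences and then combine the three pieces of information at a single index.

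First I would use that $q$ and $q'$ are $\eps/4$-limsup's of $(a_n)$ and $(b_n)$ in the \emph{upper} sense: there exist indices $j_1, j_2$ such that $a_{j_1+l} \leq q + \eps/4$ and $b_{j_2+l} \leq q'+\eps/4$ for every $l \geq 0$. Using the hypotheses $q\leq r+\eps/2$ and $q'\leq r+\eps/2$, this means that for every index beyond $\max(j_1,j_2)$ one has simultaneously $a_n\leq r+3\eps/4$ and $b_n\leq r+3\eps/4$. This is the ``eventually bounded by $r+3\eps/4$'' half of the argument.

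Next I would use that $r$ is an $\eps/4$-limsup of $(c_n)$ in the \emph{lower} sense: for every $n$ there is some $m\geq 0$ with $c_{n+m}\geq r-\eps/4$. Given $N$, I would apply this with $n:=M:=\max(N,j_1,j_2)$ to get some $m$ with $c_{M+m}\geq r-\eps/4$, and set $k:=M-N+m$ (which is $\geq 0$) so that $N+k=M+m\geq M$. Then combining the two bounds at index $N+k$ yields
\[ a_{N+k}\leq r+\tfrac{3\eps}{4}\leq c_{N+k}+\tfrac{\eps}{4}+\tfrac{3\eps}{4}=c_{N+k}+\eps, \]
and the identical computation with $b$ in place of $a$ gives $b_{N+k}\leq c_{N+k}+\eps$.

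There is no real obstacle; the only point to watch is that the ``lower'' half of the $\eps/4$-limsup definition is stated as $\forall n\,\exists m$ (not merely cofinally many indices given once), which is exactly what lets us push $n$ past the upper-bound thresholds $j_1$ and $j_2$ before invoking it. The triangle $3\eps/4+\eps/4=\eps$ in the margin is the reason the choice of $\eps/4$-limsup's (rather than $\eps/2$-limsup's) matches the hypothesis $q,q'\leq r+\eps/2$ and the conclusion's slack $\eps$.
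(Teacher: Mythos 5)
Your proposal is correct and follows essentially the same route as the paper's proof: unpack the upper half of the $\frac\eps4$-limsup property for $(a_n)$ and $(b_n)$, the lower half for $(c_n)$ at an index pushed past both thresholds, and combine via $q,q'\leq r+\frac\eps2$ to get the slack $\frac{3\eps}4+\frac\eps4=\eps$. The only cosmetic difference is that the paper shifts by the sum $j+j'$ where you use $\max(N,j_1,j_2)$; both choices serve the same purpose.
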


\begin{proof}
By the definition of the approximate limsup, we have that:
\begin{itemize}
\item there is a $j$ such that for all $l$, $a_{j+l} \leq q+\frac\eps4$;
\item there is a $j'$ such that for all $l$, $b_{j'+l} \leq q' + \frac\eps4$;
\item for all $n$ there is an $m$ such that $c_{n+m} \geq r-\frac\eps4$, and in the following we denote this $m$ depending on $n$ as $m_n$.
\end{itemize}
Let $N \in \N$. We set $k:=j+j'+m_{N+j+j'}$. Then we have that
$$a_{N+k}=a_{N+j+j'+m_{N+j+j'}} \leq q+\frac\eps4 \leq r + \frac{3\eps}4 \leq c_{N+j+j'+m_{N+j+j'}} + \eps = c_{N+k}+ \eps, $$
and similarly, that $b_{N+k} \leq c_{N+k} + \eps$.
\end{proof}

We will be using the following weaker forms of the above lemma.

\begin{corollary}\label{c1}
Let $\eps > 0$. Let $(a_n)$ and $(c_n)$ be sequences of reals and $q$ and $r$ be $\frac\eps4$-limsup's of them, respectively. If $q \leq r+\frac\eps2$, then for all $N$ there is a $k $ such that $a_{N+k} \leq c_{N+k} + \eps$.
\end{corollary}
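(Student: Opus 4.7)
The plan is to observe that Corollary~\ref{c1} is the special case of Lemma~\ref{lpe} obtained by taking $(b_n) := (a_n)$ and $q' := q$. Under this specialization, $q'$ is automatically a $\tfrac\eps4$-limsup of $(b_n)$, and the extra hypothesis $q' \leq r + \tfrac\eps2$ becomes identical to the hypothesis $q \leq r + \tfrac\eps2$ that is already assumed. The conclusion of Lemma~\ref{lpe} then yields, for every $N$, a $k$ with both $a_{N+k}\le c_{N+k}+\eps$ and $b_{N+k}\le c_{N+k}+\eps$; the first of these is exactly what Corollary~\ref{c1} asserts, and the second is redundant. So invoking Lemma~\ref{lpe} in this way gives a one-line proof.

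Alternatively, one can give a direct proof by simply stripping the $(b_n)$-machinery out of the proof of Lemma~\ref{lpe}. First, from the fact that $q$ is a $\tfrac\eps4$-limsup of $(a_n)$, I would extract an index $j$ such that $a_{j+l} \leq q + \tfrac\eps4$ for all $l$. Second, from the fact that $r$ is a $\tfrac\eps4$-limsup of $(c_n)$, I would extract for every $n$ an index $m_n$ such that $c_{n+m_n} \geq r - \tfrac\eps4$. Then, given $N$, define $k := j + m_{N+j}$. Chaining the two bounds with the hypothesis $q \leq r + \tfrac\eps2$ gives
\[
a_{N+k} \;=\; a_{j+(N+m_{N+j})} \;\leq\; q + \tfrac\eps4 \;\leq\; r + \tfrac{3\eps}4 \;\leq\; c_{N+j+m_{N+j}} + \eps \;=\; c_{N+k} + \eps,
\]
as required.

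There is no real obstacle here: the argument is purely combinatorial bookkeeping, and the only subtlety (already handled in the proof of Lemma~\ref{lpe}) is to choose $k$ large enough so that the ``eventual upper bound'' witness $j$ for $(a_n)$ is reached and, simultaneously, the ``infinitely often lower bound'' witness $m_n$ for $(c_n)$ is reached past $N+j$. Both directions of the proof are short, so I would go with the appeal to Lemma~\ref{lpe} for maximum economy.
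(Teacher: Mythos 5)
Your proposal is correct and matches the paper's (implicit) argument: the paper presents Corollary~\ref{c1} as a ``weaker form'' of Lemma~\ref{lpe}, i.e.\ exactly the specialization $(b_n):=(a_n)$, $q':=q$ that you describe. The direct argument you sketch as an alternative is just the lemma's own proof with the redundant sequence removed, so either route is fine.
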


\begin{corollary}\label{c2}
Let $\eps > 0$. Let $(a_n)$, $(b_n)$ and $(c_n)$ be sequences of reals and $q$, $q'$ and $r$ be $\frac\eps4$-limsup's of them, respectively. If $q \leq r+\frac\eps2$ and $q'\leq r+\frac\eps2$, then there is a $k $ such that $a_{k} \leq c_{k} + \eps$ and $b_{k} \leq c_{k} + \eps$.
\end{corollary}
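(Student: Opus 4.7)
The plan is the obvious one: Corollary~\ref{c2} is precisely the $N = 0$ instance of Lemma~\ref{lpe}. The hypotheses of the two statements coincide verbatim --- three sequences $(a_n), (b_n), (c_n)$ with respective $\frac{\eps}{4}$-limsup's $q, q', r$ satisfying $q \leq r + \frac{\eps}{2}$ and $q' \leq r + \frac{\eps}{2}$ --- so I would simply invoke Lemma~\ref{lpe} and set $N := 0$. The $k$ produced by the lemma then satisfies $a_{0+k} \leq c_{0+k} + \eps$ and $b_{0+k} \leq c_{0+k} + \eps$, which is exactly the claim.

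There is no genuine obstacle. The point of stating this weaker form separately appears to be to isolate what is actually used in the subsequent metastability arguments, namely the mere existence of a single rank $k$ at which both $(a_n)$ and $(b_n)$ are essentially dominated by $(c_n)$, with no uniformity in a starting index. From a proof-mining perspective it is worth recording that the explicit witness is still readable off the proof of Lemma~\ref{lpe}: taking $j$ as the threshold past which $a_{j+l} \leq q + \frac{\eps}{4}$, $j'$ as the analogous threshold for $(b_n)$ and $q'$, and $m$ as the Skolem function for the ``infinitely often'' clause of the $\frac{\eps}{4}$-limsup $r$, one may take $k := j + j' + m_{j+j'}$. Thus the quantitative content of the corollary is entirely determined by the Skolem data attached to the three approximate limsup's, data which is in turn made available by Proposition~\ref{exlimsup}.
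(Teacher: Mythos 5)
Your proposal is correct and matches the paper exactly: the paper states Corollary~\ref{c2} without proof as an immediate weakening of Lemma~\ref{lpe}, obtained precisely by specializing $N:=0$. Your extraction of the explicit witness $k:=j+j'+m_{j+j'}$ is also the one given by the lemma's proof.
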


\subsection{Replacing limsup's by approximate limsup's}

We consider, in this section, $\alpha : \N \to \N$ and $\gamma : \N \to \N^*$ such that:
\begin{itemize}
\item for all $n$ and all $m \geq \alpha(n)$, $t_m \geq 1 - \frac1{n+1}$;
\item for all $n$, $t_n \leq 1 - \frac1{\gamma(n)}$.
\end{itemize}
In the case that for all $n$, $t_n = 1-\frac1{n+1}$, we may take, for all $n$, $\alpha(n):=n$ and $\gamma(n):=n+1$.

\ \\ \noindent {\it New proof of the theorem.} 
Again, we divide the proof into a series of claims.\\[1mm]

\noindent {\bf Claim I.} Let $(s_n) \subseteq (0,1)$ and $\eps >0$. Then there is a $y \in C$ and a $q \in \Q$ such that $q$ is an $\frac\eps4$-limsup of $(\|x_{s_n}-y\|^2)$ and such that for all $z \in C$ and $r \in \Q$ with $r$ being an $\frac\eps4$-limsup of $(\|x_{s_n}-z\|^2)$, $q \leq r + \frac\eps2$.\\[1mm]
{\bf Proof of claim:} Denote, for all $n$, $x_n:=x_{s_n}$. Assume towards a contradiction that for all $y\in C$ and $q \in \Q$ with $q$ being an $\frac\eps4$-limsup of $(\|x_n-y\|^2)$ there is a $z \in C$ and an $r \in \Q$ such that $r$ is an $\frac\eps4$-limsup of $(\|x_n-z\|^2)$ and $r < q - \frac\eps2$. Let now $z_1 \in C$ be arbitrary and $r_1$ be an $\frac\eps4$-limsup of $(\|x_n-z_1\|^2)$. Put, for all $n\leq\left\lceil\frac{2b^2}{\eps}\right\rceil + 2$, $z_{n+1}$ and $r_{n+1}$ be the $z$ and the $r$ obtained by applying the assumption to $z_n$ and $r_n$ playing the roles of $y$ and $q$, respectively. Then, since $r_1 \leq b^2+\frac\eps2$, and by the assumption, for each $n \leq \left\lceil\frac{2b^2}{\eps}\right\rceil + 2$, we have that $r_{n+1} < r_n - \frac\eps2$, we get that for all $n \leq \left\lceil\frac{2b^2}{\eps}\right\rceil + 3$, $r_n \leq b^2 - (n-2)\frac\eps2$. If we choose $n:= \left\lceil\frac{2b^2}{\eps}\right\rceil + 3$, we obtain that $b^2 - (n-2)\frac\eps2 \leq -\frac\eps2$, contradicting the fact that $r_n$ is an $\frac\eps4$-limsup of a sequence of nonnegative reals.\hfill $\blacksquare$\\[2mm]

Denote, for all $n$, $x_n:=x_{t_n}$. We shall prove the Cauchyness of the sequence in its ``metastable'' formulation, namely: for all $\eps>0$ and all $g: \N \to \N$ there is an $N$ such that $\|x_N - x_{N+g(N)}\| \leq \eps$. Let, therefore, $\eps >0$ and $g: \N \to \N$. From this point on, we shall use the following notations (where $n,c,d \in \N$ and $p \in C$):
\begin{align*}
 s_{p,g}(n)&:=
  \begin{cases} 
       n, \hfill & \text{if $\|x_{n+g(n)} - p\| \leq \|x_n-p\|$}, \\
       n+g(n), \hfill & \text{otherwise.} \\
  \end{cases}\\
  x^p_n & := x_{s_{p,g}(n)}\\
  \nu_4(\eps) &:= \min\left\{ \frac{\eps^4}{9216b^2}, \omega_\tau^2\left(b,\frac{\eps^2}{96b}\right), \frac{\eps^2}{16} \right\} \\
  \delta(\eps)&:= \min \left\{ \frac{\omega_\tau\left(b,\frac{\nu_4(\eps)}{3b}\right)}{b}, \frac14 \right\} \\
  p(\eps) &:= \min \left\{ \frac{\nu_4(\eps)}3, \frac{\eps^2}{96} \right\}\\
  \nu_2(\eps) &:= \frac12\psi_{b,\eta}\left(\omega_\tau\left(b,\frac{p(\eps)}{2b}\right)\right)\\
  \beta(c,\eps) &:= \frac{p(\eps)}{2b\gamma(c)} \\
  q(p,c,d,\eps) &:= \min \left\{\beta(c,\eps), \beta(s_{p,g}(d),\eps) \right\}\\
  \nu_1(p,c,d,\eps) &:= \frac12\psi_{b,\eta}(\tilde{\theta}(q(p,c,d,\eps))).
\end{align*}

\noindent {\bf Claim II.} There are $w$, $w'$, $v$, $v'\in C$ and $k$, $k'$, $l$, $l'$, $h$, $h' \in \N$ such that:
\begin{itemize}
\item $\|x_k -w\|^2 - \|x_k-w-\delta(\eps)(x-w)\|^2$,\\ $\|x^w_{k'}-w'\|^2 - \|x^w_{k'}-w'-\delta(\eps)(x-w')\|^2 \leq \frac{2\nu_4(\eps)\delta(\eps)}3$;
\item $\|x_l-w\|^2 - \left\|x_l -\frac{v+w}2\right\|^2$, $\|x_l-v\|^2 - \left\|x_l -\frac{v+w}2\right\|^2$, \\$\|x_{l'}-w'\|^2 - \left\|x_{l'} -\frac{v'+w'}2\right\|^2$, $\|x_{l'}-v'\|^2 - \left\|x_{l'} -\frac{v'+w'}2\right\|^2 \leq \nu_2(\eps)$;
\item $h,h' \geq \alpha\left(\left\lceil\max\left\{\frac{2b}{\sqrt{\nu_1(w,k,k',\eps)}},\frac{8b^2}{\nu_1(w,k,k',\eps)}\right\}\right\rceil\right)$;
\item $\|x_{h}-v\|^2 - \left\|x_{h} -\frac{v+h_Tv}2\right\|^2$, $\|x_{h'}-v'\|^2 - \left\|x_{h'} -\frac{v'+h_Tv'}2\right\|^2 \leq \frac{\nu_1(w,k,k',\eps)}2$.
\end{itemize}
{\bf Proof of claim:}
\begin{enumerate}[A.]
\item The construction of $w$ and $k$.

We apply Claim I for $(t_n)$ and $u:=\min \left\{ \frac{2\nu_4(\eps)\delta(\eps)}3, \nu_2(\eps) \right\}$. We obtain $w \in C$ and $q_w \in \Q$ such that $q_w$ is an $\frac{u}4$-limsup of $(\|x_n-w\|^2)$ and for all $z \in C$ and $q_z \in \Q$ with $q_z$ being an $\frac{u}4$-limsup of $(\|x_n-z\|^2)$ we have that $q_w \leq q_z + \frac{u}2$.

By the above applied to $z:=w+\delta(\eps)(x-w)$ and $q_z$ an $\frac{u}4$-limsup of $(\|x_n-z\|^2)$, we get after using Corollary~\ref{c1} that there is a $k$ such that
$$\|x_k-w\|^2 - \|x_k-w-\delta(\eps)(x-w)\|^2 \leq u \leq \frac{2\nu_4(\eps)\delta(\eps)}3.$$

\item The construction of $w'$ and $k'$.

We apply Claim I for $(t_{s_{w,g}(n)})$ and $u$. We obtain $w' \in C$ and $q_{w'} \in \Q$ such that $q_{w'}$ is an $\frac{u}4$-limsup of $(\|x^w_n-w'\|^2)$ and for all $z \in C$ and $q_z \in \Q$ with $q_z$ being an $\frac{u}4$-limsup of $(\|x^w_n-z\|^2)$ we have that $q_{w'} \leq q_z + \frac{u}2$.

By the above applied to $z:=w'+\delta(\eps)(x-w')$ and $q_z$ an $\frac{u}4$-limsup of $(\|x^w_n-z\|^2)$, we get after using Corollary~\ref{c1} that there is a $k'$ such that
$$\|x^w_{k'}-w'\|^2 - \|x^w_{k'}-w'-\delta(\eps)(x-w')\|^2 \leq u \leq \frac{2\nu_4(\eps)\delta(\eps)}3.$$

\item The construction of $v$ and $h$.

We apply Claim I for $(t_n)$ and $u':=\min \left\{ \frac{\nu_1(w,k,k',\eps)}2, \nu_2(\eps) \right\}$. We obtain $v \in C$ and $q_v \in \Q$ such that $q_v$ is an $\frac{u'}4$-limsup of $(\|x_n-v\|^2)$ and for all $z \in C$ and $q_z \in \Q$ with $q_z$ being an $\frac{u'}4$-limsup of $(\|x_n-z\|^2)$ we have that $q_v \leq q_z + \frac{u'}2$.

By the above applied to $z:=\frac{v+h_Tv}2$ and $q_z$ an $\frac{u'}4$-limsup of $(\|x_n-z\|^2)$, we get after using Corollary~\ref{c1} that there is an $h \geq  \alpha\left(\left\lceil\max\left\{\frac{2b}{\sqrt{\nu_1(w,k,k',\eps)}},\frac{8b^2}{\nu_1(w,k,k',\eps)}\right\}\right\rceil\right)$ such that
$$\|x_h-v\|^2 - \left\|x_h-\frac{v+h_Tv}2\right\|^2 \leq u' \leq \frac{\nu_1(w,k,k',\eps)}2.$$

\item The construction of $l$.

Since $q_w$ is a $\frac{u}4$-limsup of $(\|x_n-w\|^2)$, it is also a $\frac{\nu_2(\eps)}4$-limsup of $(\|x_n-w\|^2)$. Similarly, $q_v$ is a $\frac{\nu_2(\eps)}4$-limsup of $(\|x_n-v\|^2)$.

Put $z:=\frac{v+w}2$ and take $q_z$ to be a $\frac{\min\{u,u'\}}4$-limsup of $(\|x_n-z\|^2)$.

Since $q_z$ is also a $\frac{u}4$-limsup of $(\|x_n-z\|^2)$,
$$q_w\leq q_z + \frac{u}2 \leq q_z + \frac{\nu_2(\eps)}2$$
and similarly
$$q_v\leq q_z + \frac{\nu_2(\eps)}2.$$
Also take note that $q_z$ is a $\frac{\nu_2(\eps)}4$-limsup of $(\|x_n-z\|^2)$. By Corollary~\ref{c2}, we get that there is an $l$ such that
$$\|x_l-w\|^2 - \left\|x_l -\frac{v+w}2\right\|^2,\quad\|x_l-v\|^2 - \left\|x_l -\frac{v+w}2\right\|^2 \leq \nu_2(\eps).$$

\item The construction of $v'$ and $h'$.

We apply Claim I for $(t_{s_{w,g}(n)})$ and $u'$. We obtain $v' \in C$ and $q_{v'} \in \Q$ such that $q_{v'}$ is an $\frac{u'}4$-limsup of $(\|x^w_n-v'\|^2)$ and for all $z \in C$ and $q_z \in \Q$ with $q_z$ being an $\frac{u'}4$-limsup of $(\|x^w_n-z\|^2)$ we have that $q_{v'} \leq q_z + \frac{u'}2$.

By the above applied to $z:=\frac{v'+h_Tv'}2$ and $q_z$ an $\frac{u'}4$-limsup of $(\|x^w_n-z\|^2)$, we get after using Corollary~\ref{c1} that there is an $h'_0 \geq \alpha\left(\left\lceil\max\left\{\frac{2b}{\sqrt{\nu_1(w,k,k',\eps)}},\frac{8b^2}{\nu_1(w,k,k',\eps)}\right\}\right\rceil\right)$ such that
$$\|x^w_{h'_0}-v'\|^2 - \left\|x^w_{h'_0}-\frac{v'+h_Tv'}2\right\|^2 \leq u' \leq \frac{\nu_1(w,k,k',\eps)}2.$$
Put $h':=s_{w,g}(h'_0) \geq h'_0 \geq  \alpha\left(\max\left\{\frac{2b}{\sqrt{\nu_1(w,k,k',\eps)}},\frac{8b^2}{\nu_1(w,k,k',\eps)}\right\}\right)$. Then
$$\|x_{h'}-v'\|^2 - \left\|x_{h'} -\frac{v'+h_Tv'}2\right\|^2 \leq \frac{\nu_1(w,k,k',\eps)}2.$$

\item The construction of $l'$.

Since $q_{w'}$ is a $\frac{u}4$-limsup of $(\|x^w_n-w'\|^2)$, it is also a $\frac{\nu_2(\eps)}4$-limsup of $(\|x^w_n-w'\|^2)$. Similarly, $q_{v'}$ is a $\frac{\nu_2(\eps)}4$-limsup of $(\|x^w_n-v'\|^2)$.

Put $z:=\frac{v'+w'}2$ and take $q_z$ to be a $\frac{\min\{u,u'\}}4$-limsup of $(\|x^w_n-z\|^2)$.

Since $q_z$ is also a $\frac{u}4$-limsup of $(\|x^w_n-z\|^2)$,
$$q_{w'}\leq q_z + \frac{u}2 \leq q_z + \frac{\nu_2(\eps)}2$$
and similarly
$$q_{v'}\leq q_z + \frac{\nu_2(\eps)}2.$$
Also take note that $q_z$ is a $\frac{\nu_2(\eps)}4$-limsup of $(\|x^w_n-z\|^2)$. By Corollary~\ref{c2}, we get that there is an $l'_0$ such that
$$\|x^w_{l'_0}-w'\|^2 - \left\|x^w_{l'_0} -\frac{v'+w'}2\right\|^2,\quad\|x^w_{l'_0}-v'\|^2 - \left\|x^w_{l'_0} -\frac{v'+w'}2\right\|^2 \leq \nu_2(\eps).$$
Put $l':=s_{w,g}(l'_0)$. Then
$$\|x_{l'}-w'\|^2 - \left\|x_{l'} -\frac{v'+w'}2\right\|^2,\quad\|x_{l'}-v'\|^2 - \left\|x_{l'} -\frac{v'+w'}2\right\|^2 \leq \nu_2(\eps).$$
\end{enumerate}

We are now done.\hfill $\blacksquare$\\[2mm]

\noindent {\bf Claim III.} Let $w$, $w'$, $v$, $v'\in C$ and $k$, $k'$, $l$, $l'$, $h$, $h' \in \N$ be as in Claim II. If we put $N:=k'$, then $\|x_N - x_{N+g(N)}\| \leq \eps$. (One notices here that the part of the proof corresponding to this claim will not need further inspection, as it is enough to obtain a value for $k'$ in an analysis of Claim II.)\\[1mm]
{\bf Proof of claim:} We will further divide the proof of this claim into sub-claims.\\[1mm]

\noindent {\bf Sub-claim 1.} We have that:
\begin{itemize}
\item $\|x_{h}-v\|^2 - \left\|x_{h} -\frac{v+h_Tv}2\right\|^2$, $\|x_{h}-h_Tv\|^2 - \left\|x_{h} -\frac{v+h_Tv}2\right\|^2$,\\
 $\|x_{h'}-v'\|^2 - \left\|x_{h'} -\frac{v'+h_Tv'}2\right\|^2$, $\|x_{h'}-h_Tv'\|^2 - \left\|x_{h'} -\frac{v'+h_Tv'}2\right\|^2  \leq \nu_1(w,k,k',\eps)$.
\end{itemize}
{\bf Proof of sub-claim:} First, we remark that:
$$\|x_h - Tx_h\| = \left\|t_hTx_h + (1-t_h)x - Tx_h\right\| = \left\|(1-t_h)(x-Tx_h) \right\| \leq b(1-t_h),$$
so, using Corollary~\ref{cor-ht}.(ii),
$$\|x_h-h_Tx_h\| \leq b(1-t_h)$$
and, by Corollary~\ref{cor-ht}.(i),
$$\|x_h-h_Tv\| \leq \|h_Tv-h_Tx_h\| + \|x_h-h_Tx_h\| \leq \|x_h - v\| + b(1-t_h).$$
Now we may write:
\begin{align*}
\|x_h - h_Tv\|^2 &\leq \|x_h-v\| ^ 2 +b^2(1-t_h)^2+ 2b\cdot b (1-t_h)\\
&\leq \left\|x_h - \frac{v+h_Tv}2 \right\|^2 + \frac{\nu_1(w,k,k',\eps)}2 + \frac{b^2}{\frac{4b^2}{\nu_1(w,k,k',\eps)}} + \frac{2b^2}{\frac{8b^2}{\nu_1(w,k,k',\eps)}} \\
&\leq \left\|x_h - \frac{v+h_Tv}2 \right\|^2 + \frac{\nu_1(w,k,k',\eps)}2 + \frac{\nu_1(w,k,k',\eps)}4 + \frac{\nu_1(w,k,k',\eps)}4 \\
&=  \left\|x_h - \frac{v+h_Tv}2 \right\|^2 + \nu_1(w,k,k',\eps).
\end{align*}
Similarly, one may show that $\|x_{h'}-h_Tv'\|^2 - \left\|x_{h'} -\frac{v'+h_Tv'}2\right\|^2  \leq \nu_1(w,k,k',\eps)$.\hfill $\blacksquare$\\[2mm]

\noindent {\bf Sub-claim 2.} We have that:
$$\|h_Tv-v\|,\ \|h_Tv'-v'\| \leq \tilde{\theta}(q(w,k,k',\eps)).$$
{\bf Proof of sub-claim:} Suppose that $\|h_Tv-v\| \geq \tilde{\theta}(q(w,k,k',\eps))$. Then
$$\|(x_h-v)-(x_h-h_Tv)\| = \|h_Tv-v\| \geq  \tilde{\theta}(q(w,k,k',\eps)),$$
so
\begin{align*}
\left\|x_h - \frac{v+h_Tv}2\right\|^2 + \psi_{b,\eta}(\tilde{\theta}(q(w,k,k',\eps))) &\leq \frac12\|x_h-v\|^2 + \frac12\|x_h-h_Tv\|^2 \\
&\leq \left\|x_h-\frac{v+h_Tv}2\right\|^2 + \frac12\psi_{b,\eta}(\tilde{\theta}(q(w,k,k',\eps))),
\end{align*}
which is a contradiction, since $\psi_{b,\eta}(\tilde{\theta}(q(w,k,k',\eps)))  >0$.

Similarly, one shows that $\|h_Tv'-v'\| \leq \tilde{\theta}(q(w,k,k',\eps))$.\hfill $\blacksquare$\\[2mm]

\noindent {\bf Sub-claim 3.}  We have that:
\begin{itemize}
\item $\langle x_k-x, j(x_k-w) \rangle$, $\langle x^w_{k'}-x,j(x^w_{k'}-w')\rangle \leq \frac{\nu_4(\eps)}3$;
\item $\langle x^w_{k'}-x,j(x^w_{k'}-w)\rangle$, $\langle x_k-x, j(x_k-w') \rangle \leq \frac{\eps^2}{96}$.
\end{itemize}
{\bf Proof of sub-claim:} Since $\|h_Tv-v\| \leq \tilde{\theta}(q(w,k,k',\eps))$, we have, using Corollary~\ref{cor-ht}.(iii), that $\|Tv-v\| \leq q(w,k,k',\eps)$. We now compute:
\begin{align*}
\langle x_k-x, j(x_k-v) \rangle &= \left\langle t_k(Tx_k-x), j(x_k-v) \right\rangle \\
&=t_k\langle Tx_k -Tv, j(x_k-v) \rangle + t_k \langle Tv-v, j(x_k -v) \rangle + t_k\langle v-x, j(x_k - v) \rangle \\
&\leq t_k\|x_k-v\|^2  + \|Tv-v\| \|x_k-v\| + t_k\langle v-x, j(x_k - v) \rangle \\
&\leq t_k\|x_k-v\|^2  + t_k\langle v-x, j(x_k - v) \rangle + b\|Tv-v\| \\
&\leq t_k\langle x_k-v, j(x_k-v) \rangle + t_k \langle v-x,j(x_k -v) \rangle + b\|Tv-v\| \\
&= t_k \langle x_k - x, j(x_k - v) \rangle +b\|Tv-v\| ,
\end{align*}
from which we obtain
\begin{align*}
\langle x_k-x, j(x_k-v) \rangle &\leq \frac{b}{1-t_k}\|Tv-v\|\\
&\leq \frac{b}{1-t_k} \cdot q(w,k,k',\eps) \\
&\leq \frac{b}{1-t_k} \cdot \beta(k,\eps) = \frac{b}{1-t_k} \cdot \frac{p(\eps)}{2b\gamma(k)} \leq \frac{p(\eps)}2.
\end{align*}
Suppose now that $\|w-v\| \geq \omega_\tau\left(b,\frac{p(\eps)}{2b}\right)$. Then
$$\|(x_l - w) - (x_l - v)\| = \|w-v\| \geq \omega_\tau\left(b,\frac{p(\eps)}{2b}\right)$$
and so
\begin{align*}
\left\|x_l - \frac{v+w}2\right\|^2 + \psi_{b,\eta}\left(\omega_\tau\left(b,\frac{p(\eps)}{2b}\right)\right) &\leq \frac12\|x_l-v\|^2 + \frac12\|x_l-w\|^2 \\
&\leq \left\|x_l-\frac{v+w}2\right\|^2 + \frac12\psi_{b,\eta}\left(\omega_\tau\left(b,\frac{p(\eps)}{2b}\right)\right),
\end{align*}
which is a contradiction, since $\psi_{b,\eta}\left(\omega_\tau\left(b,\frac{p(\eps)}{2b}\right)\right)  >0$.

Therefore
$$\|(x_k-w)-(x_k-v)\| = \|w-v\| \leq \omega_\tau\left(b,\frac{p(\eps)}{2b}\right),$$
so
$$\|j(x_k-w)-j(x_k-v)\| \leq \frac{p(\eps)}{2b}.$$
We have then
\begin{align*}
\langle x_k-x, j(x_k-w) \rangle&\leq \langle x_k-x, j(x_k-v) \rangle + \langle  x_k-x,j(x_k-w)-j(x_k-v)\rangle\\
& \leq  \frac{p(\eps)}2 +  b\cdot\frac{p(\eps)}{2b} = p(\eps).
\end{align*}

Similarly, taking into account, when needed, that
$$q(w,k,k',\eps)\leq\beta(s_{w,g}(k'),\eps),$$
we obtain the other three inequalities.\hfill $\blacksquare$\\[2mm]

\noindent {\bf Sub-claim 4.} We have that:
$$\|x_k-w\|^2,\ \|x^w_{k'}-w'\|^2 \leq \nu_4(\eps).$$
{\bf Proof of sub-claim:} By Lemma~\ref{jl}, we have:
$$\|x_k - w - \delta(\eps)(x-w) \|^2 \leq \|x_k-w\|^2 + 2\langle - \delta(\eps)(x-w), j(x_k-w-\delta(\eps)(x-w)) \rangle.$$
Given that, as before, $\delta(\eps) \in (0,1)$ and so $w+\delta(\eps)(x-w) = (1-\delta(\eps))w+\delta(\eps)x \in C$, and that
$$\|(x_k-w) - (x_k-w-\delta(\eps)(x-w))\| = \|\delta(\eps)(x-w)\| \leq \delta(\eps) \cdot b \leq \omega_\tau\left(b,\frac{\nu_4(\eps)}{3b} \right),$$
we get that
$$\|j(x_k-w)-j(x_k-w-\delta(\eps)(x-w))\| \leq \frac{\nu_4(\eps)}{3b}.$$
Therefore (using the first item in Claim II),
\begin{align*}
\|x_k-w\|^2 &= \langle x_k-w, j(x_k-w) \rangle \\
&\leq \langle x_k - x, j(x_k-w) \rangle + \langle x -w, j(x_k-w)-j(x_k-w-\delta(\eps)(x-w) )\rangle\\
&\quad + \langle x - w, j(x_k-w-\delta(\eps)(x-w))  \rangle \\
&\leq \frac{\nu_4(\eps)}3 + b \cdot \frac{\nu_4(\eps)}{3b} + \frac1{2\delta(\eps)}(\|x_k-w\|^2 - \|x_k - w - \delta(\eps)(x-w) \|^2 ) \\
&\leq \frac{2\nu_4(\eps)}3 +  \frac1{2\delta(\eps)} \cdot  \frac{2\nu_4(\eps)\delta(\eps)}3 = \nu_4(\eps).
\end{align*}

In a similar way, using the fact that $\langle x^w_{k'}-x,j(x^w_{k'}-w')\rangle \leq \frac{\nu_4(\eps)}3$, we obtain the other inequality to be proven.\hfill $\blacksquare$\\[2mm]

\noindent {\bf Sub-claim 5.} We have that $\|x^w_N - w\| \leq \frac\eps2$.\\[1mm]
{\bf Proof of sub-claim:} We know that $\|x^w_{k'} - w' \|\leq \frac{\eps^2}{96b}$. Since
$$\|(x^w_{k'}-w) - (w'-w)\| = \|x^w_{k'} - w'\| \leq \omega_\tau\left(b,\frac{\eps^2}{96b}\right),$$
we have that
$$\|j(x^w_{k'}-w)-j(w'-w)\| \leq \frac{\eps^2}{96b}$$
and so
\begin{align*}
\langle w'-x, j(w'-w) \rangle &\leq \langle w'-x^w_{k'},j(w'-w) \rangle + \langle x^w_{k'}-x, j(w'-w) - j(x^w_{k'} -w ) \rangle \\
&\quad+ \langle x^w_{k'} - x, j(x^w_{k'}-w) \rangle \\
&\leq b \cdot \frac{\eps^2}{96b} + b \cdot \frac{\eps^2}{96b}  + \frac{\eps^2}{96}  = \frac{\eps^2}{32}.
\end{align*}
Similarly, using $x_k$ as the ``pivot'', we get that $\langle w-x, j(w-w') \rangle \leq \frac{\eps^2}{32}$, so
$$\langle w-w',j(w-w') \rangle \leq \frac{\eps^2}{16},$$
i.e.
$$\|w-w'\| \leq \frac\eps4.$$
We can now compute:
$$\|x^w_N - w\| = \|x^w_{k'} - w\| \leq \|x^w_{k'} - w'\| + \|w-w'\| \leq \frac\eps4 + \frac\eps4 = \frac\eps2,$$
which is what we wanted.
\hfill $\blacksquare$\\[2mm]

It follows immediately, by the definition of $x^w_N$, that $\max\{\|x_N - w\|, \|x_{N+g(N)} - w\| \}\leq \frac\eps2 $. To finish the proof of the claim, we see that
$$\|x_N - x_{N+g(N)}\| \leq \|x_N - w\| + \|x_{N+g(N)} - w\| \leq \frac\eps2+\frac\eps2 = \eps,$$
which also finishes the proof of the theorem.\hfill $\blacksquare$\\[2mm]

\section{The extraction of the witness}\label{sec:proof-mining}

\subsection{The logical analysis of Claim I}

The first proposition
in this section, Proposition~\ref{exlimsup-nd}, is the (partial) 
functional interpretation of 
Proposition~\ref{exlimsup}, i.e. of the existence of $\varepsilon$-limsup's 
using only functionals definable in the fragment 
$T_1$ (which only contains the 
recursor constants $R_0$ and $R_1$) of G\"odel's $T$. This analysis was obtained with the crucial guidance of the functional interpretation of induction from \cite{Par72}.

We then give in Proposition 
\ref{claim-i} the (partial) functional interpretation of the proof 
of Claim I, i.e. the existence of $\varepsilon$-infima for approximate 
limsup's, by functions definable in $T_2$ (as now also $R_2$ is used).
Since the functional interpretation of the Claim II, which only uses 
the existence of approximate limsup's and plain logic plus elementary 
arithmetic, can be interpreted already in 
$T_1,$ this guarantees the extractability of a rate of metastability 
definable in $T_2.$

In the following we use, for any $n,m \in \N$, the notation $n \dotdiv m$ to denote $n-m$ if $n \geq m$ and $0$ otherwise. We also use the usual conventions 
when defining higher-order functionals and write e.g. `$JUM(b\cdot (k+1)
\dotdiv PUM)$' instead of `$J(U,M,b\cdot (k+1)\dotdiv P(U,M))$'. Occasionally, 
we also use the $\lambda$-notation $\lambda x_1,\ldots,x_n.t[x_1,\ldots,x_n]$ 
from functional programming, for a term $t[x_1,\ldots,x_n]$ depending on the variables $x_1,\ldots,x_n,$ to 
denote the function: $(x_1,\ldots,x_n)\mapsto t[x_1,\ldots,x_n].$

\begin{proposition}\label{exlimsup-nd}
Let $b, k \in \N$ and $(a_n)$ be a sequence of reals contained in the interval $[0,b]$. Define the following functionals:
\begin{align*}
WUM0&:=_{\N^\N} 0^1. \\
WUM(n+1)&:=_{\N^\N} \lambda y.U(WUMn,y,n). \\
JUM0 &:=_{\N} 0. \\
JUM(n+1) &:=_{\N} M(WUM(b\cdot (k+1)  \dotdiv n), JUMn, b\cdot (k+1) \dotdiv n). \\
PUM &:=_{\N}
  \begin{cases} 
       \textup{the least natural number $p \leq b \cdot (k+1)$ such that}\\
       \quad a_{JUM(b\cdot (k+1) + 1 \dotdiv p) + WUMp(JUM(b\cdot (k+1) + 1 \dotdiv p))} > \frac{p-1}{k+1} \\
        \quad \textup{and } a_{JUM(b\cdot (k+1)  \dotdiv p) + WUM(p+1)(JUM(b\cdot (k+1) \dotdiv p))} \leq \frac{p}{k+1}, \hfill & \textup{if there is one}, \\
       0, \hfill & \textup{otherwise.} \\
  \end{cases}\\
NUM &:=_{\N^\N} WUM(PUM). \\
TUM &:=_{\N} JUM(b \cdot (k+1) \dotdiv PUM).
\end{align*}

Then, for all $U,M : \N^\N \times \N \times \N \to \N$, we have that $0 \leq PUM \leq b\cdot (k+1)$ and:
\begin{enumerate}[(i)]
\item $a_{M(NUM,TUM,PUM) + (NUM)(M(NUM,TUM,PUM))} \geq \frac{PUM}{k+1} - \frac1{k+1}$;
\item $a_{TUM + U(NUM,TUM,PUM)} \leq \frac{PUM}{k+1} + \frac1{k+1}$.
\end{enumerate}
\end{proposition}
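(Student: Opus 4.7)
The plan is to verify conditions (i) and (ii) by unwinding the recursive definitions, once it is established that $PUM$ is well-defined. The whole construction realizes the functional interpretation of the $\Pi^0_2$-induction used in the proof of Proposition~\ref{exlimsup}, in the spirit of \cite{Par72}: $WUM(p)$ plays the role of a would-be realizer for $Q(p) \equiv \forall j\,\exists l\,(a_{j+l} > \tfrac{p-1}{k+1})$, built up by the induction step, while the backward indexing $b\cdot(k+1)\dotdiv n$ inside $JUM$ serves to collect the ``challenges'' needed when one searches for the first $p$ where $Q$ breaks down.

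The first step is to show that $\{0,\ldots,b\cdot(k+1)\}$ contains some $p$ satisfying both conjuncts in the defining clause of $PUM$. Write $\tilde Q(p)$ for the first conjunct
$$a_{JUM(b\cdot(k+1)+1\dotdiv p)\,+\,WUM(p)(JUM(b\cdot(k+1)+1\dotdiv p))} > \tfrac{p-1}{k+1}.$$
Since $a_n \geq 0 > -\tfrac{1}{k+1}$ we have $\tilde Q(0)$, and since $a_n \leq b$ we have $\neg\tilde Q(b\cdot(k+1)+1)$. Let $p_0 \in \{1,\ldots,b\cdot(k+1)+1\}$ be the least index with $\neg\tilde Q(p_0)$ and set $p := p_0 - 1$. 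Then $\tilde Q(p)$ holds by the minimality of $p_0$, while the failure of $\tilde Q(p_0)$, together with the recursion clause $WUM(p+1) = \lambda y.\,U(WUM(p),y,p)$ and the identity $b\cdot(k+1)+1\dotdiv(p+1) = b\cdot(k+1)\dotdiv p$, rewrites to exactly the second conjunct at $p$. Hence this $p$ lies in the range searched by $PUM$, so $0 \leq PUM \leq b\cdot(k+1)$.

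With $P := PUM$ in hand, (i) and (ii) follow by direct substitution. Applying the $JUM$-recursion at $n := b\cdot(k+1)\dotdiv P$,
$$JUM(b\cdot(k+1)+1\dotdiv P) = M\bigl(WUM(P),\,JUM(b\cdot(k+1)\dotdiv P),\,P\bigr) = M(NUM,TUM,PUM),$$
so the first conjunct in the definition of $PUM$ is literally the strict form of (i). Similarly $WUM(P+1)(TUM) = U(WUM(P),TUM,P) = U(NUM,TUM,PUM)$, so the second conjunct reads $a_{TUM + U(NUM,TUM,PUM)} \leq \tfrac{P}{k+1}$, which yields (ii) after weakening the bound by $\tfrac{1}{k+1}$.

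The main obstacle is purely bookkeeping: one must keep straight the interplay between the forward recursion defining $WUM$ and the backward indexing $b\cdot(k+1)\dotdiv n$ inside $JUM$, and verify that they mesh so that the two conjuncts selecting $PUM$ translate cleanly into the witness equalities demanded by (i) and (ii). No analytic content is required beyond $0 \leq a_n \leq b$; all the combinatorics is encoded in the bounded least-element search defining $PUM$, which is the finite primitive-recursive substitute for the $\Pi^0_2$-induction step of Proposition~\ref{exlimsup}.
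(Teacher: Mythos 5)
Your proposal is correct and takes essentially the same route as the paper: the paper establishes the existence of a suitable $p$ by assuming $\tilde Q(p)\to\tilde Q(p+1)$ for all $p\le b\cdot(k+1)$ and deriving the false statement $\tilde Q(b\cdot(k+1)+1)$ by induction, which is just the contrapositive of your least-counterexample argument, after which it likewise identifies the least such $p$ with $PUM$. Your unwinding of the $JUM$/$WUM$ recursions (via $b\cdot(k+1)+1\dotdiv PUM=(b\cdot(k+1)\dotdiv PUM)+1$) to read off (i) and (ii) matches the paper's verification step for step.
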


\begin{proof}
We start with the following claim, analogous to the one in the proof of Proposition~\ref{exlimsup}.\\[1mm]

\noindent {\bf Claim.} There is a $p \in \N$ with $0 \leq p \leq b\cdot (k+1)$ such that it is not the case that
$$a_{JUM(b\cdot (k+1) + 1 \dotdiv p) + WUMp(JUM(b\cdot (k+1) + 1 \dotdiv p))} > \frac{p-1}{k+1}$$
implies that 
$$a_{JUM(b\cdot (k+1) \dotdiv p) + WUM(p+1)(JUM(b\cdot (k+1) \dotdiv p))} > \frac{p}{k+1}.$$
{\bf Proof of claim:} Assume towards a contradiction that the opposite holds, i.e. for all natural numbers $p$ smaller or equal to $b \cdot (k+1)$, we have that $Q(p)$ implies $Q(p+1)$, where $Q(p)$ is the statement that
$$a_{JUM(b\cdot (k+1)+ 1 \dotdiv p) + WUMp(JUM(b\cdot (k+1) + 1 \dotdiv p))} > \frac{p-1}{k+1}.$$
Since $Q(0)$ holds trivially, we have by induction that $Q(b \cdot (k+1) +1)$. But that states that
$$a_{JUM0 + WUM(b \cdot (k+1)+ 1)(JUM0)} > b,$$
which clearly is false.
\hfill $\blacksquare$\\[2mm]

Take $p$ to be minimal with this property. Then $p=PUM$, by the definition of the latter, so clearly $0 \leq PUM \leq b\cdot (k+1)$. We prove the remaining conclusions.
\begin{enumerate}[(i)]
\item Since $PUM < b\cdot (k+1) + 1$, we may write $b\cdot (k+1) + 1 \dotdiv PUM = (b\cdot (k+1) \dotdiv PUM) + 1$, so
\begin{align*}
JUM(b\cdot (k+1) + 1  \dotdiv PUM) &= JUM((b\cdot (k+1) \dotdiv PUM) + 1) \\
&= M(WUM(PUM), JUM(b\cdot (k+1) \dotdiv PUM), PUM) \\
&= M(NUM, TUM, PUM)
\end{align*}
and
$$WUM(PUM)(JUM(b\cdot (k+1) + 1 \dotdiv PUM)) = NUM(M(NUM, TUM, PUM)).$$
Thus,
$$a_{M(NUM,TUM,PUM) + (NUM)(M(NUM,TUM,PUM))} > \frac{PUM-1}{k+1} = \frac{PUM}{k+1} - \frac1{k+1}.$$
\item Since
$$JUM(b\cdot (k+1)  \dotdiv PUM) = TUM$$
and
\begin{align*}
WUM(PUM+1)(JUM(b\cdot (k+1) \dotdiv PUM))&= WUM(PUM+1)(TUM) \\
&= U(WUM(PUM),TUM,PUM) \\
&= U(NUM,TUM,PUM),
\end{align*}
we have that
$$a_{TUM + U(NUM,TUM,PUM)} \leq \frac{PUM}{k+1}\leq \frac{PUM}{k+1} + \frac1{k+1}.$$
\end{enumerate}
The proof is finished.
\end{proof}

The following is a logical analysis of Claim I in the second proof (using approximate limsup's) of Theorem~\ref{main-thm} and uses as an ingredient the functional interpretation of the existence of approximate limsup's. Here, and in the remainder of the paper, we shall frequently use numerical indices to refer to components of tuples.

\begin{proposition}\label{claim-i}
Let $b \in \N^*$. Let $X$ be a Banach space, $C \subseteq X$ be a set of diameter at most $b$ and $(x_n) \subseteq C$. Let $\eps >0$. Let $z_1 \in C$ be arbitrary. Define the following functionals (where any $\mathcal{O}$ denotes a constant zero function):
\begin{align*}
(M(\Omega,0),U(\Omega,0))&:=(\mathcal{O},\mathcal{O}).\\
(M(\Omega,x+1),U(\Omega,x+1))&:=\lambda p,y,L,m. (\Omega_{5,6}(M(\Omega,x),U(\Omega,x),p,y,L,m)).\\
I&:=\left\lceil\frac{2b^2}{\eps} \right\rceil.\\
\overline{M}(\Omega)&:=\lambda L,m,p. ((M(\Omega,I+1))(p,z_1,L,m)). \\
\overline{U}(\Omega)&:=\lambda L,m,p. ((U(\Omega,I+1))(p,z_1,L,m)). \\
\WPsi(\Omega,0)&:=(M(\Omega,I), U(\Omega,I),P(\overline{U}(\Omega),\overline{M}(\Omega)),z_1,N(\overline{U}(\Omega),\overline{M}(\Omega)), T(\overline{U}(\Omega),\overline{M}(\Omega))). \\
\WPsi(\Omega,x+1) &:= (M(\Omega,I\dotdiv (x+1)), U(\Omega, I\dotdiv (x+1)), \Omega_{1-4}(\WPsi(\Omega,x))).
\end{align*}
In the above, $P$, $N$ and $T$ are the functionals defined in Proposition~\ref{exlimsup-nd}, customized by instantiating their free parameters with $b \mapsto b^2$, $k \mapsto \left\lceil\frac{4}{\eps}\right\rceil$ and $(a_n) \mapsto (\|x_n-z_1\|^2)$. We continue to use in the following the notation $k := \left\lceil\frac{4}{\eps}\right\rceil$.

Then, for any $\Omega$ there is an $i < I$ such that if we denote
$$(\widetilde{U},\widetilde{N},p,y,L,m):=\WPsi(\Omega,i)$$
and
$$(r,z,\widetilde{L},\widetilde{m},u,n):=\Omega(\WPsi(\Omega,i)),$$
we have that
$$0 \leq p \leq b^2 \cdot (k+1),\ \|x_{u+L(u)}-y\|^2 \geq \frac{p}{k+1} - \frac\eps4  \text{ and } \|x_{m+n}-y\|^2 \leq \frac{p}{k+1} + \frac\eps4$$
and that if
$$0 \leq r \leq b^2 \cdot (k+1),\ \|x_{\widetilde{U}(r,z,\widetilde{L},\widetilde{m})+\widetilde{L}(\widetilde{U}(r,z,\widetilde{L},\widetilde{m}))}-z\|^2 \geq \frac{r}{k+1} - \frac\eps4 \text{ and } \|x_{\widetilde{m}+\widetilde{N}(r,z,\widetilde{L},\widetilde{m})}-z\|^2 \leq \frac{r}{k+1} + \frac\eps4$$
then
$$\frac{p}{k+1} \leq \frac{r}{k+1} + \frac\eps2.$$

In order to obtain a true realizer, we now put, for any $\Omega$, $i(\Omega)$ to be the least $i < I$ which realizes the above (in order to define it properly as a functional, we put it to be $0$ in the ``impossible'' case that there isn't one, as in the definition of $P$ in Proposition~\ref{exlimsup-nd}) and $\Psi(\Omega)$ to be $\WPsi(\Omega,i(\Omega))$.
\end{proposition}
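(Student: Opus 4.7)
The plan is to mirror the contradiction argument in the proof of Claim I from Section \ref{section-limsup}, but carried out in a way that tracks the Skolem/functional-interpretation data produced by Proposition \ref{exlimsup-nd}. The original Claim I built, by contradiction, a finite sequence $z_1, z_2, \ldots$ of points together with $\varepsilon/4$-limsup values $r_1 > r_2 > \cdots$ each decreasing by at least $\varepsilon/2$; since these values lie in $[0,b^2]$ and $I = \lceil 2b^2/\varepsilon \rceil$, the chain cannot be extended $I+1$ times. I will re-enact exactly this pigeonhole, but interpreting ``next candidate'' via $\Omega$ and ``limsup witnesses'' via the $P, N, T$ produced in Proposition \ref{exlimsup-nd}.

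Concretely, I would argue by contradiction: suppose that for every $i < I$, setting $(\widetilde{U},\widetilde{N},p,y,L,m) := \WPsi(\Omega,i)$ and $(r,z,\widetilde{L},\widetilde{m},u,n) := \Omega(\WPsi(\Omega,i))$, at least one of the four required conditions fails. The first three conditions (namely $0 \leq p \leq b^2(k+1)$ and the two limsup-inequalities involving $y$) are precisely the output guarantees of Proposition \ref{exlimsup-nd} applied with the free parameters specialized as in the statement and with the challenger data $(U,M)$ equal to the appropriate slice $(U(\Omega,\cdot),M(\Omega,\cdot))$ of the recursion. Hence one verifies, by tracing through the definitions of $\overline M(\Omega), \overline U(\Omega), M(\Omega,x), U(\Omega,x)$ (and noting the symmetric roles played at each recursion level), that those three conditions do hold at every $i < I$. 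So the only clause that can fail is the last one, i.e. at every $i < I$ we must have
\[
\frac{r_i}{k+1} < \frac{p_i}{k+1} - \frac{\varepsilon}{2},
\]
where $(p_i,r_i)$ are the values read off at the $i$-th level. But then the construction of $\WPsi(\Omega,i+1)$, which uses $\Omega_{1-4}(\WPsi(\Omega,i))$ as the new $(r,z,\widetilde L,\widetilde m)$-slot and freshly applies Proposition \ref{exlimsup-nd} at the shifted index $I \dotdiv (i+1)$, ensures that the output $p_{i+1}$ at the next level satisfies $p_{i+1}/(k+1) \leq r_i/(k+1) + 1/(k+1)$ --- since $r_i$ was a valid $\varepsilon/4$-limsup and the proposition returns the least suitable $p$. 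Iterating this, the sequence $(p_i/(k+1))_{i<I}$ decreases by more than $\varepsilon/2 - 1/(k+1) \geq \varepsilon/4$ per step, and since $p_0 \leq b^2 + 1$, after $I$ steps we reach a negative value, contradicting $0 \leq p_i \leq b^2(k+1)$.

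The main obstacle I expect is the purely syntactic one of keeping the indices and nested $\lambda$-abstractions straight: unfolding $\WPsi(\Omega,x+1)$ and verifying that its $M$- and $U$-components are precisely the $U$ and $M$ inputs that Proposition \ref{exlimsup-nd} needs in order to certify the $\varepsilon/4$-limsup conditions for the new $y$-slot. In particular, I need to check that the shift from level $I$ down to $I \dotdiv (x+1)$ in the recursion correctly feeds the history $(M(\Omega,x), U(\Omega,x))$ that was accumulated from $\Omega$'s prior challenges, so that the challenger embodied by $\Omega$ at step $i$ is exactly the one defeated by the witnesses $(N,T,P)$ at step $i$. Once this bookkeeping is done, the final step --- defining $i(\Omega)$ as the least $i < I$ for which the four clauses hold (and $0$ otherwise, as with $PUM$ in Proposition \ref{exlimsup-nd}) and setting $\Psi(\Omega) := \WPsi(\Omega, i(\Omega))$ --- is immediate.
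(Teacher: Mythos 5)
Your overall strategy---contradiction plus a pigeonhole on a length-$I$ decreasing chain of approximate-limsup values---is the same as the paper's, but two steps of your execution misread the construction and would not survive verification. First, Proposition~\ref{exlimsup-nd} is instantiated \emph{once}, with $(a_n)\mapsto(\|x_n-z_1\|^2)$; its functionals $P,N,T$ therefore certify the three conditions only at the base level $\WPsi(\Omega,0)$, whose $y$-slot is the anchor $z_1$. For $i\geq 1$ the point $y$ is whatever $\Omega$ handed back at the previous level, and there is no direct guarantee from Proposition~\ref{exlimsup-nd} that it satisfies the approximate-limsup conditions. Those conditions must instead be extracted from the negated hypothesis: the negation of the claim at level $x$ is an implication whose conclusion asserts the three conditions for $(r_{I-x},z_{I-x},\widetilde{L}_{I-x},\widetilde{m}_{I-x})$, and the bookkeeping identities $(p_{I-x-1},y_{I-x-1},L_{I-x-1},m_{I-x-1})=\Omega_{1-4}(\WPsi(\Omega,x))=(r_{I-x},z_{I-x},\widetilde{L}_{I-x},\widetilde{m}_{I-x})$, together with $u_{I-x-1}=\widetilde{U}_{I-x}(\ldots)$ and $n_{I-x-1}=\widetilde{N}_{I-x}(\ldots)$, transport them to level $x+1$. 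So the three conditions propagate by induction down the chain; they are not ``output guarantees at every $i$,'' and the bookkeeping you defer as ``purely syntactic'' is exactly where this propagation lives.

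Second, $\WPsi(\Omega,x+1)$ does \emph{not} ``freshly apply Proposition~\ref{exlimsup-nd} at the shifted index'': its components $3$--$6$ are literally $\Omega_{1-4}(\WPsi(\Omega,x))$, so $p_{i+1}=r_i$ exactly---no new least $p$ is computed, and your estimate $p_{i+1}/(k+1)\leq r_i/(k+1)+1/(k+1)$ has no basis in the definition. The per-step decrease comes solely from the negated final clause, $r_i/(k+1)<p_i/(k+1)-\eps/2$, hence $p_{i+1}/(k+1)<p_i/(k+1)-\eps/2$. This matters numerically: your weakened rate of $\eps/2-1/(k+1)\geq\eps/4$ per step over $I=\lceil 2b^2/\eps\rceil$ steps, starting from $p_0\leq b^2+1$, only accounts for a total drop of about $b^2/2$ and does not reach a negative value, so your pigeonhole does not close. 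With the correct decrease of more than $\eps/2$ per step and the starting bound $p_I/(k+1)\leq b^2$, one gets $0\leq p_0/(k+1)<b^2-I\eps/2\leq 0$, which is the contradiction the paper obtains.
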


\begin{proof}
Assume towards a contradiction that the opposite holds, i.e. there is an $\Omega$ such that if we denote for all $x\leq I$,
$$(\widetilde{U}_{I-x},\widetilde{N}_{I-x},p_{I-x},y_{I-x},L_{I-x},m_{I-x}):=\WPsi(\Omega,x)$$
and
$$(r_{I-x},z_{I-x},\widetilde{L}_{I-x},\widetilde{m}_{I-x},u_{I-x},n_{I-x}):=\Omega(\WPsi(\Omega,x)),$$
then for all $x < I$, if
$$0 \leq p_{I-x} \leq b^2 \cdot (k+1),\ \|x_{u_{I-x}+L_{I-x}(u_{I-x})}-y_{I-x}\|^2 \geq \frac{p_{I-x}}{k+1} - \frac\eps4 \text{ and } \|x_{m_{I-x}+n_{I-x}}-y_{I-x}\|^2 \leq \frac{p_{I-x}}{k+1} + \frac\eps4$$
then
$$0 \leq r_{I-x} \leq b^2 \cdot (k+1),$$
$$\|x_{\widetilde{U}_{I-x}(r_{I-x},z_{I-x},\widetilde{L}_{I-x},\widetilde{m}_{I-x})+\widetilde{L}_{I-x}(\widetilde{U}_{I-x}(r_{I-x},z_{I-x},\widetilde{L}_{I-x},\widetilde{m}_{I-x}))}-z_{I-x}\|^2 \geq \frac{r_{I-x}}{k+1} - \frac\eps4,$$
$$\|x_{\widetilde{m}_{I-x}+\widetilde{N}_{I-x}(r_{I-x},z_{I-x},\widetilde{L}_{I-x},\widetilde{m}_{I-x})}-z_{I-x}\|^2 \leq \frac{r_{I-x}}{k+1} + \frac\eps4$$
and
$$\frac{r_{I-x}}{k+1} < \frac{p_{I-x}}{k+1} - \frac\eps2.$$
Remark that for all $x \leq I$,
$$(\widetilde{U}_{I-x}, \widetilde{N}_{I-x}) = (\WPsi(\Omega,x))_{1,2} = (M(\Omega,I-x),U(\Omega,I-x)).$$
In addition,
$$p_I=P(\overline{U}(\Omega),\overline{M}(\Omega)),\quad y_I=z_1,\quad L_I=N(\overline{U}(\Omega),\overline{M}(\Omega)),\quad m_I=T(\overline{U}(\Omega),\overline{M}(\Omega)).$$
We now derive that for all $x < I$,
\begin{align*}
(r_{I-x},z_{I-x},\widetilde{L}_{I-x},\widetilde{m}_{I-x}) &= \Omega_{1-4}(\WPsi(\Omega,x)) \\
&= (\WPsi(\Omega,x+1))_{3-6} \\
&= (p_{I-x-1},y_{I-x-1},L_{I-x-1},m_{I-x-1})
\end{align*}
and
\begin{align*}
\widetilde{U}_{I-x}(r_{I-x},z_{I-x},\widetilde{L}_{I-x},\widetilde{m}_{I-x}) &= \widetilde{U}_{I-x} (p_{I-x-1},y_{I-x-1},L_{I-x-1},m_{I-x-1})\\
&= \Omega_5(\WPsi(\Omega,x+1)) \\
&= (\Omega(\WPsi(\Omega,x+1)))_5 \\
&= u_{I-x-1},
\end{align*}
together with the corresponding statement $\widetilde{N}_{I-x}(r_{I-x},z_{I-x},\widetilde{L}_{I-x},\widetilde{m}_{I-x})  = n_{I-x-1}$. Therefore, what we know is that for all $x < I$, if
$$0 \leq p_{I-x} \leq b^2 \cdot (k+1),\ \|x_{u_{I-x}+L_{I-x}(u_{I-x})}-y_{I-x}\|^2 \geq \frac{p_{I-x}}{k+1} - \frac\eps4 \text{ and } \|x_{m_{I-x}+n_{I-x}}-y_{I-x}\|^2 \leq \frac{p_{I-x}}{k+1} + \frac\eps4$$
then
$$0 \leq p_{I-x-1} \leq b^2 \cdot (k+1),$$
$$\|x_{u_{I-x-1}+L_{I-x-1}(u_{I-x-1})}-y_{I-x-1}\|^2 \geq \frac{p_{I-x-1}}{k+1} - \frac\eps4,$$
$$\|x_{m_{I-x-1}+n_{I-x-1}}-y_{I-x-1}\|^2 \leq \frac{p_{I-x-1}}{k+1} + \frac\eps4$$
and
$$\frac{p_{I-x-1}}{k+1} < \frac{p_{I-x}}{k+1} - \frac\eps2.$$
We shall now prove by induction that for all $x \leq I$,
$$0 \leq p_{I-x} \leq b^2 \cdot (k+1),\ \|x_{u_{I-x}+L_{I-x}(u_{I-x})}-y_{I-x}\|^2 \geq \frac{p_{I-x}}{k+1} - \frac\eps4 \text{ and } \|x_{m_{I-x}+n_{I-x}}-y_{I-x}\|^2 \leq \frac{p_{I-x}}{k+1} + \frac\eps4.$$
It remains to show the base case ($x=0$): we apply Proposition~\ref{exlimsup-nd} for $\overline{U}(\Omega)$ and $\overline{M}(\Omega)$. Using
\begin{align*}
\overline{M}(\Omega)(N(\overline{U}(\Omega),\overline{M}(\Omega)),T(\overline{U}(\Omega),\overline{M}(\Omega)),P(\overline{U}(\Omega),\overline{M}(\Omega))) &= \overline{M}(\Omega)(L_I,m_I,p_I) \\
&= M(\Omega, I+1)(p_I,y_I,L_I,m_I) \\
&= \Omega_5(M(\Omega,I),U(\Omega,I),p_I,y_I,L_I,m_I) \\
&= \Omega_5(\widetilde{U}_I, \widetilde{N}_I,p_I,y_I,L_I,m_I) \\
&= \Omega_5(\WPsi(\Omega,0)) \\
&= u_I
\end{align*}
and - similarly -
$$\overline{U}(\Omega)(N(\overline{U}(\Omega),\overline{M}(\Omega)),T(\overline{U}(\Omega),\overline{M}(\Omega)),P(\overline{U}(\Omega),\overline{M}(\Omega)))
=n_I,$$ 
we see that what we obtain is that
$$0 \leq p_{I} \leq b^2 \cdot (k+1),\ \|x_{u_{I}+L_{I}(u_{I})}-y_{I}\|^2 \geq \frac{p_{I}}{k+1} - \frac1{k+1} \text{ and } \|x_{m_{I}+n_{I}}-y_{I}\|^2 \leq \frac{p_{I}}{k+1} + \frac1{k+1}.$$
Using that $\frac1{k+1} \leq \frac\eps4$, we obtain that
$$0 \leq p_{I} \leq b^2 \cdot (k+1),\ \|x_{u_{I}+L_{I}(u_{I})}-y_{I}\|^2 \geq \frac{p_{I}}{k+1} - \frac\eps4 \text{ and } \|x_{m_{I}+n_{I}}-y_{I}\|^2 \leq \frac{p_{I}}{k+1} + \frac\eps4,$$
i.e. what we wanted. The induction case follows immediately by our assumption. Therefore we have that for all $x <I$,
$$\frac{p_{I-x-1}}{k+1} < \frac{p_{I-x}}{k+1} - \frac\eps2,$$
so
$$\frac{p_0}{k+1} < \frac{p_{I}}{k+1} - I\cdot\frac\eps2.$$
Since by construction $p_I \leq b^2 \cdot (k+1)$ and $0 \leq \frac{p_0}{k+1}$, what we obtain is
$$0 < b^2 - I\cdot\frac\eps2,$$
contradicting the fact that $I=\left\lceil\frac{2b^2}{\eps} \right\rceil$.
\end{proof}

\subsection{The logical analysis of Claim II}

In the sequel, we shall denote by (here $x$ stands for the sequence $(x_n)$)
$$A(x,\delta,\widetilde{U},\widetilde{N},p,y,L,r,z,\widetilde{L},\widetilde{m},u,t)$$
the statement that (where we write $k := \left\lceil\frac{4}{\delta}\right\rceil$)
$$0 \leq p \leq b^2 \cdot (k+1),\ \|x_{u+L(u)}-y\|^2 \geq \frac{p}{k+1} - \frac\delta4  \text{ and } \|x_t-y\|^2 \leq \frac{p}{k+1} + \frac\delta4$$
and that if
$$0 \leq r \leq b^2 \cdot (k+1),\ \|x_{\widetilde{U}(r,z,\widetilde{L},\widetilde{m})+\widetilde{L}(\widetilde{U}(r,z,\widetilde{L},\widetilde{m}))}-z\|^2 \geq \frac{r}{k+1} - \frac\delta4 \text{ and } \|x_{\widetilde{m}+\widetilde{N}(r,z,\widetilde{L},\widetilde{m})}-z\|^2 \leq \frac{r}{k+1} + \frac\delta4$$
then
$$\frac{p}{k+1} \leq \frac{r}{k+1} + \frac\delta2.$$

We will also make the parameters $(x_n)$ and $\eps >0$ (though not $b$) in the $\Psi$ from Proposition~\ref{claim-i} explicit in what follows. Thus, 
Proposition~\ref{claim-i} states that for any $(x_n) \subseteq C$, $\eps > 0$, $g$ and $f$, if we put
$$(\uu{w},k):=\Psi(x,\eps,(g,f)),$$
i.e., in particular, $\uu{w}$ is a 5-tuple -- corresponding to $(\widetilde{U},\widetilde{N},p,y,L)$ in the above definition -- whereas $k$ is a number (and also, we add for clarity, $g$ returns a 5-tuple -- corresponding to $(r,z,\widetilde{L},\widetilde{m},u)$ -- while $f$ returns a number), then
$$A(x,\eps,\uu{w},g(\uu{w},k),k+f(\uu{w},k)).$$

\subsubsection{Preparation}

In the proposition below, the eight items correspond to the eight inequalities involving the sequence $(x_n)$ that must be satisfied in Claim II.

\begin{proposition}\label{lambda-term}
Let $X$ be a Banach space, $b \in \N^*$ and $C \subseteq X$ be a set of diameter at most $b$. Let $(x_n) \subseteq C$. Then there is a $\Phi$ such that for any $\Psi$ having the property that for any $(y_n) \subseteq C$, $\delta > 0$, $g$ and $f$, if we put
$$(\uu{w},k):=\Psi(y,\delta,(g,f))$$
then one has
$$A(y,\delta,\uu{w},g(\uu{w},k),k+f(\uu{w},k)),$$
we have that for any $u$, $u'$, $\uu{g}$, $\uu{g}'$, $\uu{f}$, $\uu{f}'$, $\iota$, and $\vp$, if we put
$$(\uu{w},k,\uu{w}',k',\uu{v},\uu{v}',l,l',h,h'):=\Phi(\Psi)(u, u', \uu{g}, \uu{g'}, \uu{f}, \uu{f}', \iota, \vp)$$
we have that
\begin{enumerate}[(i)]
\item $A(x,u, \uu{w}, g_0(\uu{w},k),k+f_0(\uu{w},k))$; \label{po1}
\item $A(\vp(\uu{w}),u,\uu{w}',g'_0(\uu{w},k,\uu{w}',k'),k'+f'_0(\uu{w},k,\uu{w}',k'))$; \label{po2}
\item $A(x,u'(\uu{w},k,\uu{w}',k'),\uu{v},g_1(\uu{w},k,\uu{w}',k',\uu{v},h),h+f_1(\uu{w},k,\uu{w}',k',\uu{v},h))$;\\
$h \geq \iota(\uu{w},k,\uu{w}',k')$; \label{po3}
\item $A(x,u'(\uu{w},k,\uu{w}',k'),\uu{v},g_2(\uu{w},k,\uu{w}',k',\uu{v},l),l+f_2(\uu{w},k,\uu{w}',k',\uu{v},l))$; \label{po4}
\item $A(x,u,\uu{w},g_2(\uu{w},k,\uu{w}',k',\uu{v},l),l+f_2(\uu{w},k,\uu{w}',k',\uu{v},l))$; \label{po5}
\item $A(\vp(\uu{w}),u'(\uu{w},k,\uu{w}',k'),\uu{v}',g'_1(\uu{w},k,\uu{w}',k',\uu{v}',h'),h'+f'_1(\uu{w},k,\uu{w}',k',\uu{v}',h'))$;\\
$h' \geq \iota(\uu{w},k,\uu{w}',k')$; \label{po6}
\item $A(\vp(\uu{w}),u'(\uu{w},k,\uu{w}',k'),\uu{v}',g'_2(\uu{w},k,\uu{w}',k',\uu{v}',l'),l'+f'_2(\uu{w},k,\uu{w}',k',\uu{v}',l'))$; \label{po7}
\item $A(\vp(\uu{w}),u,\uu{w}',g'_2(\uu{w},k,\uu{w}',k',\uu{v}',l'),l'+f'_2(\uu{w},k,\uu{w}',k',\uu{v}',l'))$. \label{po8}
\end{enumerate}
Take notice that:
\begin{enumerate}[1.]
\item By the discussion at the beginning of this subsection, we already have such a $\Psi$, but its form is not relevant for this proposition.
\item The exact form of the $\Phi$ will be given over the course of the proof.
\end{enumerate}
\end{proposition}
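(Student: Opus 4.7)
The plan is to build $\Phi(\Psi)$ as a sequential composition of six invocations of $\Psi$, mirroring the six applications of Claim I (parts A--F) in the proof of Claim II in Section~\ref{section-approx}. The external inputs $u, u', \uu{g}, \uu{g}', \uu{f}, \uu{f}', \iota, \vp$ play the roles of the tolerances $u, u'$ and of the challenger closures used by those six applications, while $\vp$ encodes the sequence reindexing $n\mapsto s_{w,g}(n)$ that appears in parts B, E and F. I will thread the outputs of earlier $\Psi$-invocations into the challenger arguments of later ones, so that each of items (\ref{po1})--(\ref{po8}) is read off the specification of one of the individual $\Psi$-calls.

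Items (\ref{po1}) and (\ref{po2}) are immediate: set $(\uu{w},k):=\Psi(x, u, (g_0, f_0))$ to obtain (\ref{po1}), then $(\uu{w}',k'):=\Psi(\vp(\uu{w}), u, (g_0'(\uu{w},k,\cdot,\cdot), f_0'(\uu{w},k,\cdot,\cdot)))$ to obtain (\ref{po2}). For items (\ref{po3}) and (\ref{po6}) the $A$-statement itself is produced by a $\Psi$-call on $x$ (resp.\ $\vp(\uu{w})$) with tolerance $u'(\uu{w},k,\uu{w}',k')$ and challenger $(g_1,f_1)$ (resp.\ $(g_1',f_1')$) curried by $\uu{w},k,\uu{w}',k'$. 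The side-condition $h\geq\iota(\uu{w},k,\uu{w}',k')$ (resp.\ $h'\geq\iota(\uu{w},k,\uu{w}',k')$) is enforced by pre-shifting the challenger's $f$-component so that the returned index automatically lies above $\iota(\uu{w},k,\uu{w}',k')$, and $h$ (resp.\ $h'$) is then read off the shifted output.

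Items (\ref{po4})--(\ref{po5}) and (\ref{po7})--(\ref{po8}) form the delicate coupled pairs: the same challenger value $g_2(\uu{w},k,\uu{w}',k',\uu{v},l)$ and offset $l+f_2(\ldots)$ must witness an $A$-statement both for the fresh tuple $\uu{v}$ and for the earlier tuple $\uu{w}$. In the non-functional proof (Part D of Claim II) this simultaneity is delivered by Corollary~\ref{c2}, whose finitary content says that there is a common index at which approximate limsup's for $(\|x_n-w\|^2)$, $(\|x_n-v\|^2)$ and $(\|x_n-(v+w)/2\|^2)$ are all sufficiently tight. In our setting this is captured by a single $\Psi$-invocation applied at the midpoint, whose second-clause minimality output controls the $p$-values for both $\uu{w}$ and $\uu{v}$ simultaneously; $l$ is read off this midpoint call and the two required $A$-statements follow from that minimality together with the approximate limsup properties already guaranteed for $\uu{w}$ and for $\uu{v}$.

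The principal obstacle is the apparent circular dependency in items (\ref{po5}) and (\ref{po8}): their challenger $g_2(\uu{w},k,\uu{w}',k',\uu{v},l)$ depends on $\uu{v},l$, which are produced only \emph{after} $\uu{w}$, yet the $A$-statement at this challenger must be met by $\uu{w}$ itself. The resolution is to absorb the downstream computation into the challenger fed to the $\Psi$-call that produces $\uu{w}$: that challenger, given a candidate $(\uu{w},k)$, internally executes the remaining construction of $\uu{w}',\uu{v},\uu{v}',l,l',h,h'$ as functions of the candidate and then returns the induced $g_2, f_2$ values as probes. It is precisely this self-referential structure that raises the complexity of the extraction from $R_1$ to $R_2$ (as signalled just before Proposition~\ref{exlimsup-nd}). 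The remainder of the proof will then consist of writing this composite $\lambda$-expression down explicitly and verifying (\ref{po1})--(\ref{po8}) by unrolling the specifications of the six $\Psi$-calls.
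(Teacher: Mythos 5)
Your overall architecture is right, and you correctly identify the self-referential device needed for items (\ref{po5}) and (\ref{po8}): the challenger fed to the $\Psi$-call producing $(\uu{w},k)$ must internally run the entire downstream construction of $\uu{w}',\uu{v},\uu{v}',l,\dots$ as a function of the candidate and return the induced $(g_2,f_2)$-probe. The index-shifting for $h\geq\iota(\dots)$ and $l=k+\tilde h$ also matches what the paper does with its $\Xi(\xi,\xi')$ functionals. However, there is a genuine gap: each of the four tuples $\uu{w},\uu{w}',\uu{v},\uu{v}'$ must satisfy \emph{two} $A$-statements with \emph{different} challengers (e.g.\ $\uu{w}$ must satisfy (\ref{po1}) at $(g_0,f_0)$ and (\ref{po5}) at the downstream-induced $(g_2,f_2)$-probe; $\uu{v}$ must satisfy (\ref{po3}) at a shifted $(g_1,f_1)$ and (\ref{po4}) at a shifted $(g_2,f_2)$), while a single application of $\Psi$ only guarantees the $A$-statement at the one challenger you actually pass to it. Your text treats (\ref{po1}) as ``immediate'' from $\Psi(x,u,(g_0,f_0))$ and then, separately, says the $\uu{w}$-producing call must use the downstream-absorbing challenger; these are two different $\Psi$-calls and there is no reason their outputs coincide. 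The missing ingredient is the case-distinction challenger (the Dialectica treatment of contraction $A\to A\wedge A$, which the paper makes explicit in the remark following its proof): the challenger first tests whether the candidate already satisfies the first $A$-statement at the first probe and, if not, returns that first probe --- so that $\Psi$'s guarantee forces the first statement to hold --- and only otherwise returns the second probe, whereupon $\Psi$'s guarantee delivers the second statement. This is done four times in the paper (for $(g_w,f_w)$, $(g_{w'},f_{w'})$, $(g_v,f_v)$, $(g_{v'},f_{v'})$), and without it the construction you describe does not yield a single tuple satisfying all eight items.

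Two secondary inaccuracies: your account of the coupled pairs via ``a single $\Psi$-invocation applied at the midpoint'' and ``minimality of $p$-values'' imports the content of Lemma~\ref{abstract-lemma-two}/Corollary~\ref{c2} and of the later instantiation in subsection~5.2.2 into this proposition, which is purely logical --- here $\uu{g},\uu{f}$ are arbitrary and there is no midpoint; the simultaneity is again delivered by the case distinctions, not by any limsup structure. And the jump from $R_1$ to $R_2$ is not caused by the self-reference in this proposition (the paper notes that the interpretation of Claim II needs only $T_1$); it arises in the interpretation of Claim~I, i.e.\ in Proposition~\ref{claim-i}.
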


\begin{proof}
We shall first derive a purely qualitative version of the above. Namely, let $u$, $u'$, $\uu{g}$, $\uu{g}'$, $\uu{f}$, $\uu{f}'$, $\iota$, $\vp$ be given. We will show that there exist $\uu{w}$, $k$, $\uu{w}'$, $k'$, $\uu{v}$, $\uu{v}'$, $l$, $l'$, $h$, $h'$ such that \eqref{po1}-\eqref{po8} hold. It will then follow, by the functional interpretation, that these objects can be explicitly constructed. The first step will be to prove the ``non-metastable'' version of our hypothesis, which we do in the following claim.\\[1mm]

\noindent {\bf Claim.} For all $(y_n) \subseteq C$ and $\delta>0$, there are $\uu{w}$ and $k$ such that for all $\uu{z}$ and $m$,
$$A(y,\delta,\uu{w},\uu{z},k+m).$$
\noindent {\bf Proof of claim:} Suppose the opposite, so there are $(y_n) \subseteq C$ and $\delta>0$ such that for all $\uu{w}$ and $k$ there are $\uu{z}$ and $m$, such that it is not the case that
$$A(y,\delta,\uu{w},\uu{z},k+m).$$
Put, for any $\uu{w}$ and $k$, $(g,f)(\uu{w},k)$ to be such a $\uu{z}$ and $m$. Then, for all $\uu{w}$ and $k$,
$$\neg A(y,\delta,\uu{w},g(\uu{w},k),k+f(\uu{w},k)).$$
If we now put $(\uu{w},k):=\Psi(y,\delta,g,f)$, we contradict our hypothesis.
\hfill $\blacksquare$\\[2mm]

If we apply the Claim to $(x,u)$, we get $\uu{w}$ and $k$ such that for all $\uu{z}$ and $m$,
\begin{equation}\label{c-e1}
A(x,u,\uu{w},\uu{z},k+m),
\end{equation}
from which we get \eqref{po1}. Apply then the Claim to $(\vp(\uu{w}),u)$ to get $\uu{w}'$ and $k'$ such that for all $\uu{z}$ and $m$,
$$A(\vp(\uu{w}),u,\uu{w}',\uu{z},k'+m),$$
from which we get \eqref{po2}. Now apply the Claim to $(x,u'(\uu{w},k,\uu{w}',k'))$ to get $\uu{v}$ and $h_0$ such that for all $\uu{z}$ and $m$,
\begin{equation}\label{c-e2}
A(x,u'(\uu{w},k,\uu{w}',k'),\uu{v},\uu{z},h_0+m).
\end{equation}
Put $h:=h_0+\iota(\uu{w},k,\uu{w}',k')$. Then we have that for all $\uu{z}$ and $m$,
$$A(x,u'(\uu{w},k,\uu{w}',k'),\uu{v},\uu{z},h+m),$$
and so we get \eqref{po3}. Let $l:=k+h_0$. From \eqref{c-e2}, we have that for all $\uu{z}$ and $m$,
$$A(x,u'(\uu{w},k,\uu{w}',k'),\uu{v},\uu{z},l+m),$$
from which we get \eqref{po4}. Similarly, from \eqref{c-e1}, we have that for all $\uu{z}$ and $m$,
$$A(x,u,\uu{w},\uu{z},l+m),$$
from which we get \eqref{po5}. Afterwards, $\uu{v}'$, $l'$ and $h'$ -- and thus \eqref{po6}, \eqref{po7} and \eqref{po8} -- are obtained in a similar manner.\hfill $\blacksquare$\\[2mm]

Now we proceed to the construction of $\Phi$. Since the above proof used only pure logic and the basic properties of the operation of addition, it follows by the soundness theorem of the functional interpretation that $\Phi$ can be constructed out of just $\lambda$-terms, $+$ and case distinction. When we shall majorize $\Phi$ to get our final bound, the case distinction will disappear, being replaced by the maximum. This is why we do not need to solve the case distinction further (which we could, by using suitable rational approximations, in order for the $\Phi$ to be fully constructive).

For conceptual clarity, we shall split the proof analysis into two distinct parts, the purely logical one and the ``mathematical'' one (which uses addition). Define the following functionals:
$$\Xi(\xi,\xi')(g,f)(\uu{x}):=(g(x_{1-5},x_6+\xi(\uu{x})),\xi'(\uu{x})+f(x_{1-5},x_6+\xi(\uu{x})))$$
$$\xi_0(\uu{x}):=x_6\qquad \xi_1(\uu{x}):=\iota(x_{1-4})\qquad \xi_2(\uu{x}):=x_2\qquad \xi'_2(\uu{x}):=x_4$$
$$(\tilde{g}_1,\tilde{f}_1):=\Xi(\xi_1,\xi_1)(g_1,f_1)\qquad (\tilde{g}_2,\tilde{f}_2):=\Xi(\xi_2,\xi_2)(g_2,f_2)\qquad (\tilde{g}_3,\tilde{f}_3):=\Xi(\xi_2,\xi_0)(g_2,f_2)$$
$$(\tilde{g}'_1,\tilde{f}'_1):=\Xi(\xi_1,\xi_1)(g'_1,f'_1)\qquad (\tilde{g}'_2,\tilde{f}'_2):=\Xi(\xi'_2,\xi'_2)(g'_2,f'_2)\qquad (\tilde{g}'_3,\tilde{f}'_3):=\Xi(\xi'_2,\xi_0)(g'_2,f'_2)$$
\mbox{}

\noindent {\bf Claim.} There exist $\uu{w}$, $k$, $\uu{w}'$, $k'$, $\uu{v}$, $\tilde{h}$, $\uu{v}'$, $\tilde{h}'$ such that 
\begin{enumerate}[(i')]
\item $A(x,u, \uu{w}, g_0(\uu{w},k),k+f_0(\uu{w},k))$; \label{po1b}
\item $A(\vp(\uu{w}),u,\uu{w}',g'_0(\uu{w},k,\uu{w}',k'),k'+f'_0(\uu{w},k,\uu{w}',k'))$; \label{po2b}
\item $A(x,u'(\uu{w},k,\uu{w}',k'),\uu{v},\tilde{g}_1(\uu{w},k,\uu{w}',k',\uu{v},\tilde{h}),\tilde{h}+\tilde{f}_1(\uu{w},k,\uu{w}',k',\uu{v},\tilde{h}))$;\label{po3b}
\item $A(x,u'(\uu{w},k,\uu{w}',k'),\uu{v},\tilde{g}_2(\uu{w},k,\uu{w}',k',\uu{v},\tilde{h}),\tilde{h}+\tilde{f}_2(\uu{w},k,\uu{w}',k',\uu{v},\tilde{h}))$; \label{po4b}
\item $A(x,u,\uu{w},\tilde{g}_3(\uu{w},k,\uu{w}',k',\uu{v},\tilde{h}),k+\tilde{f}_3(\uu{w},k,\uu{w}',k',\uu{v},\tilde{h}))$; \label{po5b}
\item $A(\vp(\uu{w}),u'(\uu{w},k,\uu{w}',k'),\uu{v}',\tilde{g}'_1(\uu{w},k,\uu{w}',k',\uu{v}',\tilde{h}'),\tilde{h}'+\tilde{f}'_1(\uu{w},k,\uu{w}',k',\uu{v}',\tilde{h}'))$;\label{po6b}
\item $A(\vp(\uu{w}),u'(\uu{w},k,\uu{w}',k'),\uu{v}',\tilde{g}'_2(\uu{w},k,\uu{w}',k',\uu{v}',\tilde{h}'),\tilde{h}'+\tilde{f}'_2(\uu{w},k,\uu{w}',k',\uu{v}',\tilde{h}'))$; \label{po7b}
\item $A(\vp(\uu{w}),u,\uu{w}',\tilde{g}'_3(\uu{w},k,\uu{w}',k',\uu{v}',\tilde{h}'),k'+\tilde{f}'_3(\uu{w},k,\uu{w}',k',\uu{v}',\tilde{h}'))$. \label{po8b}
\end{enumerate}

\noindent {\bf Proof of claim:} Define $\uu{w},k,\uu{w}',k',\uu{v},\tilde{h},\uu{v}',\tilde{h}'$ in the following way:

\begin{align*}
(g_v,f_v)(\uu{q}) &:=
  \begin{cases} 
       (\tilde{g}_1,\tilde{f}_1)(\uu{q}),  \textup{ if it is not the case that } A(x,u'(q_{1-4}), q_5, \tilde{g}_1(\uu{q}), q_6 + \tilde{f}_1(\uu{q})), \\
       (\tilde{g}_2,\tilde{f}_2)(\uu{q}),  \hfill\textup{ otherwise.} \\
  \end{cases}\\ 
a_v(\uu{r}) &:=\Psi(x,u'(\uu{r}),\lambda\uu{s}.(g_v,f_v)(\uu{r},\uu{s})).\\
(g_{v'},f_{v'})(\uu{q}) &:=
  \begin{cases} 
       (\tilde{g}'_1,\tilde{f}'_1)(\uu{q}),  \textup{ if it is not the case that } A(\vp(q_1),u'(q_{1-4}), q_5, \tilde{g}'_1(\uu{q}), q_6 + \tilde{f}'_1(\uu{q})), \\
       (\tilde{g}'_2,\tilde{f}'_2)(\uu{q}),  \hfill\textup{ otherwise.} \\
  \end{cases}\\ 
a_{v'}(\uu{r}) &:=\Psi(\vp(r_1),u'(\uu{r}),\lambda\uu{s}.(g_{v'},f_{v'})(\uu{r},\uu{s})).\\
(g_{w'},f_{w'})(\uu{q}) &:=
  \begin{cases} 
       (g'_0,f'_0)(\uu{q}),  \textup{ if it is not the case that } A(\vp(q_1),u, q_3, g'_0(\uu{q}), q_4 + f'_0(\uu{q})), \\
       (\tilde{g}'_3,\tilde{f}'_3)(\uu{q},a_{v'}(\uu{q})),  \hfill\textup{ otherwise.} \\
  \end{cases}\\ 
a_{w'}(\uu{r}) &:=\Psi(\vp(r_1),u,\lambda\uu{s}.(g_{w'},f_{w'})(\uu{r},\uu{s})).\\
(g_{w},f_{w})(\uu{q}) &:=
  \begin{cases} 
       (g_0,f_0)(\uu{q}),  \textup{ if it is not the case that } A(x,u, q_1, g_0(\uu{q}), q_2 + f_0(\uu{q})), \\
       (\tilde{g}_3,\tilde{f}_3)(\uu{q},a_{w'}(\uu{q}),a_v(\uu{q},a_{w'}(\uu{q}))),  \hfill\textup{ otherwise.} \\
  \end{cases}\\ 
(\uu{w},k) &:= \Psi(x,u,(g_w,f_w)).\\
(\uu{w}',k') &:= a_{w'}(\uu{w},k).\\
(\uu{v},\tilde{h}) &:= a_v(\uu{w},k,\uu{w}',k').\\
(\uu{v}',\tilde{h}') &:= a_{v'}(\uu{w},k,\uu{w}',k').
\end{align*}

We first apply the hypothesis on $\Psi$ to $(x,u,(g_w,f_w))$. Since $(\uu{w},k) = \Psi(x,u,(g_w,f_w))$, we have that
\begin{equation}\label{aa1}
A(x,u,\uu{w},g_w(\uu{w},k),k+f_w(\uu{w},k)).
\end{equation}
Suppose that it is not the case that
\begin{equation}\label{aa2}
A(x,u,\uu{w},g_0(\uu{w},k),k+f_0(\uu{w},k)).
\end{equation}
Then, by the definition of $(g_w,f_w)$, we have that $(g_w,f_w)(\uu{w},k)=(g_0,f_0)(\uu{w},k)$, so, by \eqref{aa1}, we have that \eqref{aa2} holds, which is a contradiction. Therefore \eqref{aa2} holds, so $(g_w,f_w)(\uu{w},k)=(\tilde{g}_3,\tilde{f}_3)(\uu{w},k,\uu{w}',k',\uu{v},\tilde{h})$ and
\begin{equation}\label{aa3}
A(x,u,\uu{w},\tilde{g}_3(\uu{w},k,\uu{w}',k',\uu{v},\tilde{h}),k+\tilde{f}_3(\uu{w},k,\uu{w}',k',\uu{v},\tilde{h})).
\end{equation}
We have thus proven the first and fifth item on our list. The other three pairs of items are proven in the same way, by applying the hypothesis on $\Psi$ to 
$$(\vp(\uu{w}),u,\lambda\uu{s}.(g_{w'},f_{w'})(\uu{w},k,\uu{s})),$$
$$(\vp(\uu{w}),u'(\uu{w},k,\uu{w}',k'),\lambda\uu{s}.(g_{v'},f_{v'})(\uu{w},k,\uu{w}',k',\uu{s})),$$
and
$$(x,u'(\uu{w},k,\uu{w}',k'),\lambda\uu{s}.(g_v,f_v)(\uu{w},k,\uu{w}',k',\uu{s})),$$
successively.
\hfill $\blacksquare$\\[2mm]

To finish the proof, we need only to use the $\uu{w}$, $k$, $\uu{w}'$, $k'$, $\uu{v}$, $\uu{v}'$ already obtained in the claim and then to put

$$h:=\tilde{h} + \iota(\uu{w},k,\uu{w}',k'),\quad  h':=\tilde{h}' + \iota(\uu{w},k,\uu{w}',k'),$$
$$l:=k+\tilde{h},\quad l':=k'+\tilde{h}'.$$

Then the items \eqref{po1}-\eqref{po8} follow from the corresponding ones in the claim by a simple verification using the above definitions and the earlier ones of $\uu{\tilde{g}}$, $\uu{\tilde{g}}'$, $\uu{\tilde{f}}$ and $\uu{\tilde{f}}'$.
\end{proof}
\begin{remark} The case distinctions made in defining the various functions 
in the proof of the claim above (and also in some proofs below) 
serve to achieve that the value produced 
simultaneously satisfies two requirements. This is reminiscent of the 
treatment of the logical contraction axiom $A\to A\wedge A$ in G\"odel's 
functional (`Dialectica') 
interpretation {\rm \cite{Goe58}}. In the end, when computing the 
bound we are interested in by a process of majorization (monotone functional 
interpretation, see {\rm \cite{Koh08}}), we can always just take the maximum of the two values 
and so the case distinctions are not needed to be computed but serve to 
justify why the bound is correct. Alternatively, the correctness of 
taking the maximum can also be argued for by using the so-called bounded 
functional interpretation {\rm \cite{FerOl05}} which globally changes the whole interpretation whereas we prefer our local verification as this does not 
require to actually spell out the general interpretation.
\end{remark}
\begin{lemma}\label{abstract-lemma-two}
Let $b \in \N^*$, $X$ be a Banach space, $C \subseteq X$ be a set of diameter at most $b$ and $(x_n) \subseteq C$. Assume that there are suitable $\delta$, $\delta'$, $\widetilde{\delta}$, $\uu{w}$, $\uu{w}'$, $\uu{q}$, $m$, $n$, $z$, $N$, $T$, $P$, $k$, $U$, $M$ such that:
\begin{enumerate}[(i)]
\item $A(x,\delta,\uu{w},\uu{q},m+n)$; $A(x,\delta',\uu{w}',\uu{q},m+n)$;
\item $0 < \delta \leq \widetilde{\delta}$; $0 < \delta' \leq \widetilde{\delta}$;
\item $z=q_2$;
\item $N$, $T$ and $P$ are the functionals defined in Proposition~\ref{exlimsup-nd}, customized by instantiating their free parameters with $b \mapsto b^2$, $k \mapsto \left\lceil\frac{4}{\min(\delta,\delta')}\right\rceil$ and $(a_n) \mapsto (\|x_n-z\|^2)$;
\item $k = \left\lceil\frac{4}{\min(\delta,\delta')}\right\rceil$;
\item $(U,M)$, for an arbitrary argument $\uu{v}$, has the following value: if it is not the case that
$$0 \leq v_3 \leq b^2 \cdot (k+1),\ \|x_{m+v_1(m)}-z\|^2 \geq \frac{v_3}{k+1} - \frac1{k+1}  \text{ and } \|x_{v_2+v_1(m)}-z\|^2 \leq \frac{v_3}{k+1} + \frac1{k+1},$$
then $(v_1(m),m)$, else if it is not the case that
$$0 \leq v_3 \leq b^2 \cdot (k+1),\ \|x_{w_1(v_3,z,v_1,v_2) + v_1(w_1(v_3,z,v_1,v_2))}-z\|^2 \geq \frac{v_3}{k+1} - \frac1{k+1}$$
and
$$\|x_{v_2+w_2(v_3,z,v_1,v_2)}-z\|^2 \leq \frac{v_3}{k+1} + \frac1{k+1},$$
then $(w_2,w_1)(v_3,z,v_1,v_2)$, else $(w'_2,w'_1)(v_3,z,v_1,v_2)$;
\item $(q_1,q_3,q_4)=(P,N,T)(U,M)$;
\item $q_5=0$;
\item $n=NUM(m)$.
\end{enumerate}
Then we have that
$$\|x_{m+n}-w_4\|^2 - \|x_{m+n}-z\|^2 \leq \widetilde{\delta} \text{ and } \|x_{m+n}-w'_4\|^2 - \|x_{m+n}-z\|^2 \leq \widetilde{\delta}.$$
\end{lemma}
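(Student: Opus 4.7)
The plan is to apply Proposition~\ref{exlimsup-nd} to the sequence $(\|x_n-z\|^2)$ with the case-split functionals $(U,M)$ from (vi), then chase the three-branch case distinction to force the third branch, and finally feed the resulting information into the implication part of $A$ to complete the arithmetic.

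First, Proposition~\ref{exlimsup-nd}, instantiated as in (iv) and identifying $(q_1,q_3,q_4)=(PUM,NUM,TUM)$ via (vii), yields $0 \leq q_1 \leq b^2(k+1)$ together with the approximate-limsup inequalities
$$\|x_{M(\uu{v})+q_3(M(\uu{v}))}-z\|^2 \geq \tfrac{q_1}{k+1} - \tfrac{1}{k+1} \quad\text{and}\quad \|x_{q_4+U(\uu{v})}-z\|^2 \leq \tfrac{q_1}{k+1} + \tfrac{1}{k+1},$$
evaluated at $\uu{v}=(q_3,q_4,q_1)$. I would then examine the three-branch case distinction in (vi) at this $\uu{v}$. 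If the first branch were active, then $(U(\uu{v}),M(\uu{v}))=(q_3(m),m)=(n,m)$ by (viii)--(ix), so the two bounds above become precisely the Case~1 predicate defining the branch; but the branch is selected only when that predicate fails, a contradiction. An identical substitution, replacing the output by $(w_2,w_1)(q_1,q_2,q_3,q_4)$, rules out the second branch as well. Hence the third branch is active, so the Case~1 and Case~2 predicates both hold, and Proposition~\ref{exlimsup-nd} applied at the actual output $(w'_2,w'_1)(q_1,q_2,q_3,q_4)$ produces the Case~2-analogue with $(w'_1,w'_2)$ in place of $(w_1,w_2)$.

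From the Case~1 predicate I read off the lower bound $\|x_{m+n}-z\|^2 \geq q_1/(k+1) - 1/(k+1)$. The Case~2 predicate, upon using $1/(k+1) \leq \delta/4$ (valid since $k \geq k_\delta := \lceil 4/\delta \rceil$) together with $q_1/(k+1) \leq q_1/(k_\delta+1)$, verifies the premise of the implication inside $A(x,\delta,\uu{w},\uu{q},m+n)$. Its conclusion is $w_3/(k_\delta+1) \leq q_1/(k_\delta+1) + \delta/2$, which combined with item~(c) of $A$ gives $\|x_{m+n}-w_4\|^2 \leq q_1/(k_\delta+1) + \tfrac{3\delta}{4}$. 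Subtracting the lower bound on $\|x_{m+n}-z\|^2$ yields $\|x_{m+n}-w_4\|^2 - \|x_{m+n}-z\|^2 \leq \delta \leq \widetilde{\delta}$. The inequality for $w'_4$ follows by the mirror argument, applying the implication of $A(x,\delta',\uu{w}',\uu{q},m+n)$ to the Case~2-analogue for $(w'_1,w'_2)$.

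The main obstacle I expect is the discretization bookkeeping needed to pass between the fine scale with step $1/(k+1)$ used in (vi) and in Proposition~\ref{exlimsup-nd}, and the coarser scale with step $1/(k_\delta+1)$ and $1/(k_{\delta'}+1)$ used inside the two invocations of $A$. The tolerances $\delta/4$ and $\delta/2$ built into $A$ are precisely what absorbs this gap, through the two estimates $1/(k+1) \leq \delta/4$ and $q_1/(k+1) \leq q_1/(k_\delta+1)$ (and their primed analogues); each of the three pieces of the premise must be checked, but the symmetric case $\delta=\delta'$ is immediate, and no new idea beyond the third-branch-forcing chase is required.
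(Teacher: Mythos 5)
Your proposal follows the paper's proof essentially verbatim: Proposition~\ref{exlimsup-nd} supplies the approximate-limsup inequalities for $(\|x_n-z\|^2)$, the three-branch case distinction in (vi) is forced into its last branch by two successive contradiction arguments, and the resulting Case~1/Case~2 predicates (together with $\tfrac{1}{k+1}\leq\tfrac{\delta}{4},\tfrac{\delta'}{4}$) discharge the premises of the implications inside the two instances of $A$, whose conclusions give the final subtraction. The one point of divergence is your "discretization bookkeeping": the paper reads every occurrence of $\tfrac{\cdot}{k+1}$ at the single $k=\lceil 4/\min(\delta,\delta')\rceil$ of hypothesis~(v), identifying the internal scales of the two $A$'s with that $k$, so the $k$-versus-$k_\delta$ gap you flag as the main obstacle never appears in its verification; moreover, your proposed absorption via $q_1/(k+1)\leq q_1/(k_\delta+1)$ points the wrong way both for the lower-bound clause of the premise and for the final subtraction against $\|x_{m+n}-z\|^2\geq q_1/(k+1)-1/(k+1)$, so if the scales genuinely differed this estimate would not close the gap. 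Under the paper's reading the issue is moot and your argument is exactly its proof.
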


\begin{proof}
By the definition of $A$, we have that
$$0 \leq w_3 \leq b^2 \cdot (k+1),\ \|x_{w_5(0)}-w_4\|^2 \geq \frac{w_3}{k+1} - \frac{\delta}4  \text{ and } \|x_{m+NUM(m)}-w_4\|^2 \leq \frac{w_3}{k+1} + \frac{\delta}4$$
and that if
$$0 \leq PUM \leq b^2 \cdot (k+1),\ \|x_{w_1(PUM,z,NUM,TUM)+(NUM)(w_1(PUM,z,NUM,TUM))}-z\|^2 \geq \frac{PUM}{k+1} - \frac{\delta}4$$
and
$$\|x_{TUM+w_2(PUM,z,NUM,TUM)}-z\|^2 \leq \frac{PUM}{k+1} + \frac{\delta}4$$
then
$$\frac{w_3}{k+1} \leq \frac{PUM}{k+1} + \frac{\delta}2.$$

The second instance of $A$ shows that
$$0 \leq w'_3 \leq b^2 \cdot (k+1),\ \|x_{w'_5(0)}-w'_4\|^2 \geq \frac{w'_3}{k+1} - \frac{\delta'}4  \text{ and } \|x_{m+NUM(m)}-w'_4\|^2 \leq \frac{w'_3}{k+1} + \frac{\delta'}4$$
and that if
$$0 \leq PUM \leq b^2 \cdot (k+1),\ \|x_{w'_1(PUM,z,NUM,TUM)+(NUM)(w'_1(PUM,z,NUM,TUM))}-z\|^2 \geq \frac{PUM}{k+1} - \frac{\delta'}4$$
and
$$\|x_{TUM+w'_2(PUM,z,NUM,TUM)}-z\|^2 \leq \frac{PUM}{k+1} + \frac{\delta'}4$$
then
$$\frac{w'_3}{k+1} \leq \frac{PUM}{k+1} + \frac{\delta'}2.$$

By Proposition~\ref{exlimsup-nd} and the condition on $(N,T,P)$, we get that
\begin{equation}\label{p-1}
\left.
\begin{gathered}
0 \leq PUM \leq b^2 \cdot (k+1),\\
\|x_{M(NUM,TUM,PUM)+(NUM)(M(NUM,TUM,PUM))}-z\|^2 \geq \frac{PUM}{k+1} - \frac1{k+1}\\
\text{and}\\
\|x_{TUM+U(NUM,TUM,PUM)}-z\|^2 \leq \frac{PUM}{k+1} + \frac1{k+1}.
\end{gathered}
\right\}
\end{equation}

By the condition on $(U,M)$, if it is not the case that
\begin{equation}\label{c-1-1}
\left.
\begin{gathered}
0 \leq PUM \leq b^2 \cdot (k+1),\ \|x_{m+NUM(m)}-z\|^2 \geq \frac{PUM}{k+1} - \frac1{k+1}\\
\text{and}\\
\|x_{TUM+NUM(m)}-z\|^2 \leq \frac{PUM}{k+1} + \frac1{k+1},
\end{gathered}
\right\}
\end{equation}
then $(U,M)(NUM,TUM,PUM)=(NUM(m),m)$. By \eqref{p-1}, it follows that \eqref{c-1-1} holds, contradicting our assumption. Therefore, indeed, \eqref{c-1-1} holds. Suppose now that it is not the case that
\begin{equation}\label{c-2-1}
\left.
\begin{gathered}
0 \leq PUM \leq b^2 \cdot (k+1),\\
\|x_{w_1(PUM,z,NUM,TUM)+(NUM)(w_1(PUM,z,NUM,TUM))}-z\|^2 \geq \frac{PUM}{k+1} - \frac1{k+1}\\
\text{and}\\
\|x_{TUM+w_2(PUM,z,NUM,TUM)}-z\|^2 \leq \frac{PUM}{k+1} + \frac1{k+1}.
\end{gathered}
\right\}
\end{equation}
Then $(U,M)(NUM,TUM,PUM)=(w_2,w_1)(PUM,z,NUM,TUM)$. By \eqref{p-1}, it follows that \eqref{c-2-1} holds, contradicting our assumption. Therefore, indeed, \eqref{c-2-1} holds. In particular, since $\frac1{k+1} \leq \frac{\delta}4$,
$$0 \leq PUM \leq b^2 \cdot (k+1),\ \|x_{w_1(PUM,z,NUM,TUM)+(NUM)(w_1(PUM,z,NUM,TUM))}-z\|^2 \geq \frac{PUM}{k+1} - \frac{\delta}4$$
and
$$\|x_{TUM+w_2(PUM,z,NUM,TUM)}-z\|^2 \leq \frac{PUM}{k+1} + \frac{\delta}4.$$
Thus, $(U,M)(NUM,TUM,PUM)=(w'_2,w'_1)(PUM,z,NUM,TUM)$. Yet again, by \eqref{p-1}, it follows that
$$0 \leq PUM \leq b^2 \cdot (k+1),\ \|x_{w'_1(PUM,z,NUM,TUM)+(NUM)(w'_1(PUM,z,NUM,TUM))}-z\|^2 \geq \frac{PUM}{k+1} - \frac1{k+1}$$
and
$$\|x_{TUM+w'_2(PUM,z,NUM,TUM)}-z\|^2 \leq \frac{PUM}{k+1} + \frac1{k+1},$$
so, since $\frac1{k+1} \leq \frac{\delta'}4$,
$$0 \leq PUM \leq b^2 \cdot (k+1),\ \|x_{w'_1(PUM,z,NUM,TUM)+(NUM)(w'_1(PUM,z,NUM,TUM))}-z\|^2 \geq \frac{PUM}{k+1} - \frac{\delta'}4$$
and
$$\|x_{TUM+w'_2(PUM,z,NUM,TUM)}-z\|^2 \leq \frac{PUM}{k+1} + \frac{\delta'}4.$$

Therefore, we have that
$$\frac{w_3}{k+1} \leq \frac{PUM}{k+1} + \frac{\delta}2$$
and
$$\frac{w'_3}{k+1} \leq \frac{PUM}{k+1} + \frac{\delta'}2.$$

We may now compute:
\begin{align*}
\|x_{m+n}-w_4\|^2 &= \|x_{m+NUM(m)}-w_4\|^2 \\
 &\leq \frac{w_3}{k+1} + \frac{\delta}4 \\
&\leq \frac{PUM}{k+1} + \frac{3\delta}4 \\
&\leq \|x_{m+NUM(m)}-z\|^2 + \frac{3\delta}4 + \frac1{k+1} \\
&\leq \|x_{m+NUM(m)}-z\|^2 + \widetilde{\delta},\\
&= \|x_{m+n}-z\|^2 + \widetilde{\delta},
\end{align*}
and similarly we obtain
$$\|x_{m+n}-w'_4\|^2 \leq \|x_{m+n}-z\|^2 + \widetilde{\delta},$$
so we are done.
\end{proof}

The following corollary is simply the instantiation of Lemma~\ref{abstract-lemma-two} above for $\delta':=\delta$, $\widetilde{\delta}:=\delta$ and $\uu{w}':=\uu{w}$.

\begin{corollary}\label{abstract-lemma-one}
Let $b \in \N^*$, $X$ be a Banach space, $C \subseteq X$ be a set of diameter at most $b$ and $(x_n) \subseteq C$. Assume that there are suitable $\delta$, $\uu{w}$, $\uu{q}$, $m$, $n$, $z$, $N$, $T$, $P$, $k$, $U$, $M$ such that:
\begin{enumerate}[(i)]
\item $A(x,\delta,\uu{w},\uu{q},m+n)$;
\item $\delta > 0$;
\item $z=q_2$;
\item $N$, $T$ and $P$ are the functionals defined in Proposition~\ref{exlimsup-nd}, customized by instantiating their free parameters with $b \mapsto b^2$, $k \mapsto \left\lceil\frac{4}{\delta}\right\rceil$ and $(a_n) \mapsto (\|x_n-z\|^2)$;
\item $k = \left\lceil\frac{4}{\delta}\right\rceil$;
\item $(U,M)$, for an arbitrary argument $\uu{v}$, has the following value: if it is not the case that
$$0 \leq v_3 \leq b^2 \cdot (k+1),\ \|x_{m+v_1(m)}-z\|^2 \geq \frac{v_3}{k+1} - \frac1{k+1}  \text{ and } \|x_{v_2+v_1(m)}-z\|^2 \leq \frac{v_3}{k+1} + \frac1{k+1},$$
then $(v_1(m),m)$, else $(w_2,w_1)(v_3,z,v_1,v_2)$;
\item $(q_1,q_3,q_4)=(P,N,T)(U,M)$;
\item $q_5=0$;
\item $n=NUM(m)$.
\end{enumerate}
Then we have that
$$\|x_{m+n}-w_4\|^2 - \|x_{m+n}-z\|^2 \leq \delta.$$
\end{corollary}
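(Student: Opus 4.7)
The plan is to derive this corollary as a direct specialization of Lemma~\ref{abstract-lemma-two}, so essentially no new mathematical content is required beyond verifying that every hypothesis of the lemma is satisfied under the substitution $\delta' := \delta$, $\widetilde{\delta} := \delta$, and $\uu{w}' := \uu{w}$. The key observation is that these substitutions collapse the ``two parameter'' structure of the lemma (which was designed to handle two different $\eps$-limsup approximations simultaneously) down to the ``one parameter'' situation of the corollary.

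Concretely, I would proceed as follows. First, under $\uu{w}' := \uu{w}$ and $\delta' := \delta$, hypothesis (i) of the lemma, which asks for both $A(x,\delta,\uu{w},\uu{q},m+n)$ and $A(x,\delta',\uu{w}',\uu{q},m+n)$, becomes the single statement $A(x,\delta,\uu{w},\uu{q},m+n)$, matching hypothesis (i) of the corollary. Hypothesis (ii) of the lemma, $0 < \delta \leq \widetilde{\delta}$ and $0 < \delta' \leq \widetilde{\delta}$, collapses under $\widetilde{\delta} := \delta$ and $\delta' := \delta$ to just $\delta > 0$. Hypotheses (iii)--(v) and (vii)--(ix) match verbatim (noting that $\min(\delta,\delta') = \delta$ in (iv) and (v)).

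The only slightly non-cosmetic verification concerns hypothesis (vi), namely the definition of $(U,M)$. In the lemma this is a three-branch case distinction whose third branch returns $(w'_2,w'_1)(v_3,z,v_1,v_2)$; but under $\uu{w}' := \uu{w}$ this collapses to $(w_2,w_1)(v_3,z,v_1,v_2)$, which is identical to the second branch's output. The second branch is triggered precisely when the first condition of the corollary's definition fails, so the three-branch function reduces to the two-branch function appearing in the corollary. Thus hypothesis (vi) of the lemma is exactly hypothesis (vi) of the corollary under the substitution.

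Finally, the conclusion of the lemma is the conjunction
\[ \|x_{m+n}-w_4\|^2 - \|x_{m+n}-z\|^2 \leq \widetilde{\delta} \quad \text{and} \quad \|x_{m+n}-w'_4\|^2 - \|x_{m+n}-z\|^2 \leq \widetilde{\delta}, \]
which under $\widetilde{\delta} := \delta$ and $w'_4 = w_4$ degenerates into the single inequality $\|x_{m+n}-w_4\|^2 - \|x_{m+n}-z\|^2 \leq \delta$ claimed by the corollary. No obstacle arises; the only point worth spelling out in the written proof is the reduction of the three-case to the two-case definition of $(U,M)$, after which a one-line appeal to Lemma~\ref{abstract-lemma-two} suffices.
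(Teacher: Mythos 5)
Your proposal is correct and is exactly the paper's approach: the text introduces the corollary as ``simply the instantiation of Lemma~\ref{abstract-lemma-two} for $\delta':=\delta$, $\widetilde{\delta}:=\delta$ and $\uu{w}':=\uu{w}$'', and your verification that the hypotheses (in particular the collapse of the three-branch definition of $(U,M)$ to the two-branch one) and the conclusion degenerate appropriately is precisely the routine check the paper leaves implicit.
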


\subsubsection{The extraction of the quantities in Claim II}\label{quantities}

We will now show how to prove Claim II in the second proof of Theorem~\ref{main-thm} (the one that uses approximate limsup's). We use the notations introduced before the statement of that claim.

We will now define $u, u', \uu{g}, \uu{g}', \uu{f}, \uu{f}', \iota, \vp$.

\begin{enumerate}[I.]

\item The definition of $u$ and $\vp$.

These quantities are defined analogously to the corresponding ones in (the proof of) Claim II. First put

$$u:= \min \left\{ \frac{2\nu_4(\eps)\delta(\eps)}3, \nu_2(\eps) \right\}$$

For the $\vp$, we just have to extract the point $w$ (i.e. the fourth component) from $\uu{w}$ and then form the sequence $(x^w_n)$ as before.

\item The definition of $g_0$ and $f_0$.

Consider some arbitrary $(\widetilde{U},\widetilde{N},p,y,L,m)$ for their arguments.

Let $z$ be $y + \delta(\eps)(x-y)$ and $N$, $T$ and $P$ be the functionals defined in Proposition~\ref{exlimsup-nd}, customized by instantiating their free parameters with $b \mapsto b^2$, $k \mapsto \left\lceil\frac{4}{u}\right\rceil$ and $(a_n) \mapsto (\|x_n-z\|^2)$. We continue to use in the following the notation $k := \left\lceil\frac{4}{u}\right\rceil$. We define $(U,M)$, for an arbitrary argument $\uu{v}$, in the following way: if it is not the case that
$$0 \leq v_3 \leq b^2 \cdot (k+1),\ \|x_{m+v_1(m)}-z\|^2 \geq \frac{v_3}{k+1} - \frac1{k+1}  \text{ and } \|x_{v_2+v_1(m)}-z\|^2 \leq \frac{v_3}{k+1} + \frac1{k+1}$$
then put their value as $(v_1(m),m)$, else put it as $(\widetilde{N},\widetilde{U})(v_3,z,v_1,v_2)$. Finally, put the value of $g_0$ to be $(PUM,z,NUM,TUM,0)$ and the value of $f_0$ to be $(NUM)(m)$.

\item The definition of $g'_0$ and $f'_0$.

Consider some arbitrary $(\widetilde{U},\widetilde{N},p,y,L,m,\widetilde{U}',\widetilde{N}',p',y',L',m')$ for their arguments.

Let $z$ be $y' + \delta(\eps)(x-y')$ and $N$, $T$ and $P$ be the functionals defined in Proposition~\ref{exlimsup-nd}, customized by instantiating their free parameters with $b \mapsto b^2$, $k \mapsto \left\lceil\frac{4}{u}\right\rceil$ and $(a_n) \mapsto (\|x^y_n-z\|^2)$. We continue to use in the following the notation $k := \left\lceil\frac{4}{u}\right\rceil$. We define $(U,M)$, for an arbitrary argument $\uu{v}$, in the following way: if it is not the case that
$$0 \leq v_3 \leq b^2 \cdot (k+1),\ \|x^y_{m'+v_1(m')}-z\|^2 \geq \frac{v_3}{k+1} - \frac1{k+1}  \text{ and } \|x^y_{v_2+v_1(m')}-z\|^2 \leq \frac{v_3}{k+1} + \frac1{k+1}$$
then put their value as $(v_1(m'),m')$, else put it as $(\widetilde{N}',\widetilde{U}')(v_3,z,v_1,v_2)$. Finally, put the value of $g'_0$ to be $(PUM,z,NUM,TUM,0)$ and the value of $f'_0$ to be $(NUM)(m')$.

\item The definition of $u'$ and $\iota$.

Consider some arbitrary $(\uu{w},k,\uu{w}',k')$ for their arguments.

Put
$$\widetilde{k}:=k+f_0(\uu{w},k)\text{ and }\widetilde{k}':=k'+f_0'(\uu{w},k,\uu{w}',k').$$
Now put the value of $u'$ to be
$$\min \left\{ \frac{\nu_1(w_2,\widetilde{k},\widetilde{k}',\eps)}2, \nu_2(\eps) \right\}$$
and the one of $\iota$ to be
$$\alpha\left(\left\lceil\max\left\{\frac{2b}{\sqrt{\nu_1(w_2,\widetilde{k},\widetilde{k}',\eps)}},\frac{8b^2}{\nu_1(w_2,\widetilde{k},\widetilde{k}',\eps)}\right\}\right\rceil\right).$$

\item The definition of $g_1$ and $f_1$.

These are defined similarly to $(g_0,f_0)$ and $(g'_0,f'_0)$, with the caveat that we need access to $(\uu{w},k,\uu{w}',k')$ in order to work with the value $u'(\uu{w},k,\uu{w}',k')$ when defining the corresponding $N$, $T$ and $P$.

\item The definition of $g_2$ and $f_2$.

These will play a role in the application of Lemma~\ref{abstract-lemma-two}, so we step carefully through their definition.

Consider some arbitrary $(\widetilde{U},\widetilde{N},p,w,L,m,\widetilde{U}',\widetilde{N}',p',w',L',m',\widetilde{U}'',\widetilde{N}'',p'',v,L'',l)$ for their arguments.

Let $z$ be $\frac{v+w}2$ and $N$, $T$ and $P$ be the functionals defined in Proposition~\ref{exlimsup-nd}, customized by instantiating their free parameters with $b \mapsto b^2$, $k \mapsto \left\lceil\frac{4}{\min\{u,u'(\uu{w},k,\uu{w}',k')\}}\right\rceil$ and $(a_n) \mapsto (\|x_n-z\|^2)$. We continue to use in the following the notation $k := \left\lceil\frac{4}{\min\{u,u'(\uu{w},k,\uu{w}',k')\}}\right\rceil$. We define $(U,M)$, for an arbitrary argument $\uu{v}$, in the following way: if it is not the case that
$$0 \leq v_3 \leq b^2 \cdot (k+1),\ \|x_{m+v_1(m)}-z\|^2 \geq \frac{v_3}{k+1} - \frac1{k+1}  \text{ and } \|x_{v_2+v_1(m)}-z\|^2 \leq \frac{v_3}{k+1} + \frac1{k+1}$$
then $(v_1(m),m)$, else if it is not the case that
$$0 \leq v_3 \leq b^2 \cdot (k+1),\ \|x_{\widetilde{U}(v_3,z,v_1,v_2) + v_1(\widetilde{U}(v_3,z,v_1,v_2))}-z\|^2 \geq \frac{v_3}{k+1} - \frac1{k+1}$$
and
$$\|x_{v_2+\widetilde{N}(v_3,z,v_1,v_2)}-z\|^2 \leq \frac{v_3}{k+1} + \frac1{k+1},$$
then put their value as $(\widetilde{N},\widetilde{U})(v_3,z,v_1,v_2)$, else put it as $(\widetilde{N}'',\widetilde{U}'')(v_3,z,v_1,v_2)$. Finally, put the value of $g_2$ to be $(PUM,z,NUM,TUM,0)$ and the value of $f_2$ to be $(NUM)(m)$.

\item The definition of $g'_1$ and $f'_1$.

These are defined similarly to $(g_1,f_1)$.

\item The definition of $g'_2$ and $f'_2$.

These are defined similarly to $(g_2,f_2)$.
\end{enumerate}

Now that we have defined $u, u', \uu{g}, \uu{g}', \uu{f}, \uu{f}', \iota, \vp$, put
$$(\uu{w},k,\uu{w}',k',\uu{v},\uu{v}',l,l',h,h'):=\Phi(\Psi)(u, u', \uu{g}, \uu{g}', \uu{f}, \uu{f}', \iota, \vp)$$
and apply Proposition~\ref{lambda-term}.

Claim II then follows by applying Corollary~\ref{abstract-lemma-one} four times and Lemma~\ref{abstract-lemma-two} two times, and then performing some simple computations similar to the ones in the original proof of the claim. The relevant fact here is that the $N$ that witnesses the metastability property is equal to
$$k'+f_0'(\uu{w},k,\uu{w}',k').$$

\section{The rate of metastability}\label{sec:metastab}

When one has reached the end of the previous section, one can rightfully say that one is in the possession of a formula witnessing, for any $\eps$ and $g$, the rank corresponding to the metastable reformulation of the Cauchy property (depending on additional parameters of the problem). It is however not an effective formula and not uniform at all as it depends on all the data of the problem.
However by a process of majorization one easily obtains a bound (called a 
rate of metastability in the Introduction) which is both effective and highly 
uniform in the sense that it -- in addition to $\varepsilon$ and $g$ -- only depends 
on the norm bound $b$ and the moduli $\eta$, $\tau$, $\Theta$, $\alpha$, and $\gamma$ but not 
on $X$, $C$, $T$, or $(t_n)$ themselves.
In order to explain this approach, however, we need to first make a detour into the details of the calculus of functionals in which our final bound 
will be expressed.

The system $T$ of Hilbert-G\"odel, mentioned in the Introduction, is a system of functionals of finite types. Those finite types are defined inductively in the following way: there is a primitive type of natural numbers, and if we have two types $\rho$ and $\tau$, we have a type denoted by $\rho \to \tau$ of functions from elements of type $\rho$ to elements of type $\tau$. Therefore, there is e.g. a type of functions $f: \N^\N \to \N^{\left(\N^\N\right)}$. Product types are not built into the system, but they can be emulated by {\it currying}, i.e. the identification of $A^{B \times C}$ with $(A^B)^C$.  The functionals themselves are given by terms in this system, which are built up inductively by repeated application of variables and of constants for zero and successor, for basic combinatory operations, and lastly for recursion over natural numbers.

The crucial notion that we will make use of in the following is the one of {\it majorization}, introduced by Howard \cite{How73}. Majorization is a family of binary relations, i.e. on elements of each type $\rho$ one has a relation $\succeq_\rho$. It is defined inductively and, moreover, hereditarily: for two natural numbers $n$ and $m$ one has $n \succeq_{\N} m$ iff $n \geq m$ and if $f$ and $g$ are of type $\rho \to \tau$ then $f \succeq_{\rho \to \tau} g$ iff for all $m$, $n$ of type $\rho$ with $m \succeq_\rho n$ one has $f(m) \succeq_\tau g(n)$. For example, to any $f: \N \to \N$, we associate the function $f^M: \N \to \N$, defined for any $n \in \N$, by
$$f^M(n):=\max_{0\leq i \leq n} f(i),$$
and it is immediate that $f^M \succeq_{\N^\N} f$ -- we say of $f^M$ that it {\it majorizes} or is a {\it majorant} for $f$. Not all elements of higher types admit a majorant, but all the constants of $T$ do, and by heredity this extends to all terms (containing only variable of types $\N,\N\to\N$) of $T$. 
As an illustrating example, if $f$ is defined recursively by a schema like the following (suppressing the type information and ignoring the definitions of $a$ and $g$):
\begin{align*}
f(0)&:=a\\
f(n+1)&:=g(n,f(n))
\end{align*}
then if $a^*$ and $g^*$ are majorants for $a$ and $g$, respectively, it is easy to check that the function $f^*$ defined by
\begin{align*}
f^*(0)&:=a^*\\
f^*(n+1)&:=\max(a^*,g^*(n,f^*(n)))
\end{align*}
majorizes $f$, where $\max$ for functionals is defined pointwise.

There is one further issue we need to take care of. In order to formalize arguments involving e.g. Banach spaces, one needs to extend (as was first done in \cite{Koh05}) the type system with a new primitive type $X$ corresponding to elements of such a space. To any such extended (`abstract') type $\rho$ one can then associate an ordinary type $\widehat{\rho}$ simply by replacing all the $X$'s with $\N$'s. Majorization is then defined in \cite{GerKoh08} on each abstract type $\rho$ as a binary relation between elements of type $\widehat{\rho}$ and those of type $\rho$, as follows: first we have that for any $x \in X$ and $n \in \N$, $n \succeq_{X} x$ iff $n \geq \|x\|$ and then one continues in the same hereditary manner as on the ordinary types.

As a consequence of all this, if one would majorize all the functionals that play a role into the definition of the witness obtained earlier, one would get a chance at finding a purely numerical (and thus uniform) rate of metastability in the sense defined above, after all the case distinctions are removed in favour of taking the corresponding maximum. This is what we will now proceed to do in a stepwise fashion.

First, we majorize the functionals introduced in Proposition~\ref{exlimsup-nd}. Those have three hidden parameters: the sequence itself, which is only used directly when defining $P$, and so it completely disappears by majorization, the $k$, which we show here explicitly, and the upper bound, which we instantiate with $b^2$, since that is the greatest possible bound on the sequences for which the approximate limsup's are obtained. Since the $P$ is trivially majorized by $b^2 \cdot (k^*+1)$, we omit it, since we can replace it by this value in its further appearances. Note that only the $N^*$ and the $T^*$ will play a role in further developments.

\begin{align*}
W^*U^*0&:= \mathcal{O}. \\
W^*U^*(n+1)&:= \lambda y.U^*(W^*U^*n,y,n). \\
J^*U^*M^*k^*0 &:= 0. \\
J^*U^*M^*k^*(n+1) &:= M^*(W^*U^*(b^2\cdot (k^*+1) ), J^*U^*M^*k^*n^*, b^2\cdot (k^*+1) ). \\
N^*U^*M^*k^* &:= W^*U^*(b^2 \cdot (k^*+1)). \\
T^*U^*M^*k^* &:= J^*U^*M^*k^*(b^2 \cdot (k^*+1)).
\end{align*}

We now do the same for the functionals in Proposition~\ref{claim-i}. Remark the added explicit parameter $l^*$. We specify that the variables $p^*$, $y^*$ and $m^*$ are of type $\N$, $L^*$ is in $\N^\N$ and the fifth and sixth components of $\Omega^*$ take values in $\N$ (this will be relevant for the calibration of the exact level of recursion that is needed to define these objects). Again, the only functional which we will use later is $\Psi^*$, the rest of them only serve to define it.

\begin{align*}
(M^*,U^*)(\Omega^*,0)&:=(\mathcal{O},\mathcal{O}).\\
(M^*,U^*)(\Omega^*,x+1)&:=\lambda p^*,y^*,L^*,m^*. (\Omega^*_{5,6}((M^*,U^*)(\Omega^*,x),p^*,y^*,L^*,m^*)).\\
I^*(l^*)&:=2b^2(l^*+1).\\
(\overline{M}^*,\overline{U}^*)(l^*,\Omega^*)&:=\lambda L^*,m^*,p^*. ((M^*,U^*)(\Omega^*,I^*(l^*)+1)(p^*,b,L^*,m^*)).\\
\WPsi^*(l^*,\Omega^*,0)&:= ((M^*,U^*)(\Omega^*,I^*(l^*)), 4b^2 \cdot (l^*+1),b,(N^*,T^*)((\overline{U}^*,\overline{M}^*)(l^*,\Omega^*))(4l^*+3)).\\
\WPsi^*(l^*,\Omega^*,x+1)&:=\max(\WPsi^*(l^*,\Omega^*,0),((M^*,U^*)(\Omega^*,I^*(l^*)),\Omega^*_{1-4}(\WPsi^*(l^*,\Omega^*,x)))).\\
\Psi^*(l^*,\Omega^*)&:=\WPsi^*(l^*,\Omega^*,I^*(l^*)),
\end{align*}
where $\max$ for tuples is understood componentwise. Now we begin the most intricate portion of the majorization procedure, namely the treatment of the functions defined in the final part of the previous section. We make some remarks in order to convince the reader of the plausibility of the solution given below. First of all, since $\vp$ yields a sequence which is bounded by $b$, it is trivially majorized, so we may omit it, like before with the $P$. In that same vein, we may omit some parameters if the majorant does not actually depend on them. For example, a majorant for $g'_0$ will now no longer depend on $\uu{w}$, and moreover it can be replaced by the same majorant as for $g_0$, provided that $\uu{w}^*$ and $k^*$ are replaced in applications by $\uu{w}'^*$ and $k'^*$. A case distinction may be replaced by a (pointwise) maximum (the verification is as immediate as for the recursion example given before), whereas when majorizing small real numbers $\delta > 0$ the maximum is, obviously, replaced by a minimum. For all undefined quantities below, see sub-section \ref{quantities}, as well as the notations introduced before the statement of Claim II in Section~\ref{section-approx}.

\begin{align*}
(N_1^*,T_1^*)U^*M^*&:= (N^*,T^*)U^*M^*\left\lceil \frac4{u} \right\rceil\\
(U_1^*,M_1^*)(\uu{w}^*,k^*,\uu{r}) &:= (\max(r_1(k^*),w_2^*(r_3,b,r_1,r_2)) , \max(k^*,w_1^*(r_3,b,r_1,r_2))).\\
(g_0^*,f_0^*)(\uu{w}^*,k^*)&:=\left( b^2\left(\left\lceil\frac4u \right\rceil +1 \right), b,(N_1^*,T_1^*)((U_1^*,M_1^*)(\uu{w}^*,k^*)),0,N_1^*((U_1^*,M_1^*)(\uu{w}^*,k^*))(k^*) \right).\\
\nu^*_1(m,n)&:=\frac12 \min_{c \leq \max(m,n+ g^M(n))} \psi_{b,\eta}(\widetilde{\theta}(\beta(c,\eps))).\qquad \widetilde{k}^*(\uu{w}^*,k^*) := k^* + f_0^*(\uu{w}^*,k^*).\\
u'^*(\uu{w}^*,k^*,\uu{w}'^*,k'^*) &:= \min \left( \frac12 \nu^*_1(\widetilde{k}^*(\uu{w}^*,k^*),\widetilde{k}^*(\uu{w}'^*,k'^*)), \nu_2(\eps) \right).\\
\iota^*(\uu{w}^*,k^*,\uu{w}'^*,k'^*) &:= \alpha^M\left(\left\lceil\max\left\{\frac{2b}{\sqrt{\nu^*_1(\widetilde{k}^*(\uu{w}^*,k^*),\widetilde{k}^*(\uu{w}'^*,k'^*))}},\frac{8b^2}{\nu^*_1(\widetilde{k}^*(\uu{w}^*,k^*),\widetilde{k}^*(\uu{w}'^*,k'^*))}\right\}\right\rceil\right).\\
\end{align*}\\[-1.6cm]
\begin{align*}
(N_2^*,T_2^*)(\uu{w}^*,k^*,\uu{w}'^*,k'^*)U^*M^*&:= (N^*,T^*)U^*M^*\left\lceil \frac4{u'^*(\uu{w}^*,k^*,\uu{w}'^*,k'^*)} \right\rceil.\\
(g_1^*,f_1^*)(\uu{w}^*,k^*,\uu{w}'^*,k'^*,\uu{v}^*,h^*)&:=\left( b^2\left(\left\lceil\frac4{u'^*(\uu{w}^*,k^*,\uu{w}'^*,k'^*)} \right\rceil +1 \right), b, \right.\\
& \qquad(N_2^*,T_2^*)(\uu{w}^*,k^*,\uu{w}'^*,k'^*)((U_1^*,M_1^*)(\uu{v}^*,h^*)),\\
&\qquad\left. \vphantom{\frac{3}{4}}0, N_1^*(\uu{w}^*,k^*,\uu{w}'^*,k'^*)((U_1^*,M_1^*)(\uu{v}^*,h^*))(h^*) \right).\\
(N_3^*,T_3^*)(\uu{w}^*,k^*,\uu{w}'^*,k'^*)U^*M^*&:= (N^*,T^*)U^*M^*\left\lceil \frac4{\min(u,u'^*(\uu{w}^*,k^*,\uu{w}'^*,k'^*))} \right\rceil.\\
(U_2^*,M_2^*)(\uu{w}^*,k^*,\uu{v}^*,l^*,\uu{r})& := (\max(r_1(k^*),w_2^*(r_3,b,r_1,r_2),v_2^*(r_3,b,r_1,r_2)),\\
&\qquad\max(k^*,w_1^*(r_3,b,r_1,r_2),v_1^*(r_3,b,r_1,r_2))).\\
(g_2^*,f_2^*)(\uu{w}^*,k^*,\uu{w}'^*,k'^*,\uu{v}^*,l^*)&:=\left( b^2\left(\left\lceil\frac4{\min(u,u'^*(\uu{w}^*,k^*,\uu{w}'^*,k'^*))} \right\rceil +1 \right), b, \right.\\
& \qquad(N_3^*,T_3^*)(\uu{w}^*,k^*,\uu{w}'^*,k'^*)((U_2^*,M_2^*)(\uu{w}^*,k^*,\uu{v}^*,l^*)),0,\\
&\qquad\left. \vphantom{\frac{3}{4}} N_3^*(\uu{w}^*,k^*,\uu{w}'^*,k'^*)((U_2^*,M_2^*)(\uu{w}^*,k^*,\uu{v}^*,l^*))(l^*) \right).\\
\end{align*}

We now majorize the functionals appearing in the proof of Proposition~\ref{lambda-term}. First, we treat the arithmetical shuffling stage.

$$\Xi(\xi,\xi')(g,f)(\uu{x}):=(g(x_{1-5},x_6+\xi(\uu{x})),\xi'(\uu{x})+f(x_{1-5},x_6+\xi(\uu{x})))$$
$$\xi_0(\uu{x}):=x_6\qquad \xi^*_1(\uu{x}):=\iota^*(x_{1-4})\qquad \xi_2(\uu{x}):=x_2\qquad \xi'_2(\uu{x}):=x_4$$
$$(\tilde{g}_1^*,\tilde{f}_1^*):=\Xi(\xi^*_1,\xi^*_1)(g_1^*,f_1^*)\qquad (\tilde{g}^*_2,\tilde{f}^*_2):=\Xi(\xi_2,\xi_2)(g^*_2,f^*_2)\qquad (\tilde{g}^*_3,\tilde{f}^*_3):=\Xi(\xi_2,\xi_0)(g^*_2,f^*_2)$$
$$(\tilde{g}'^*_2,\tilde{f}'^*_2):=\Xi(\xi'_2,\xi'_2)(g^*_2,f^*_2)\qquad (\tilde{g}'^*_3,\tilde{f}'^*_3):=\Xi(\xi'_2,\xi_0)(g^*_2,f^*_2)$$

Finally, we may treat the purely logical stage, where taking the maximum replaces the case distinctions. What we need to take care of is that the instance of $(g^*_0,f^*_0)$ majorizing $(g'_0,f'_0)$ is applied to its proper arguments, namely (here) $\uu{q}^*_{3-4}$.

\begin{align*}
(g^*_v,f^*_v)(\uu{q}^*) &:= \max((\tilde{g}^*_1,\tilde{f}^*_1)(\uu{q}^*), (\tilde{g}^*_2,\tilde{f}^*_2)(\uu{q}^*)).\\
a_v^*(\uu{r}^*) &:= \Psi^*\left( \left\lceil \frac{1}{u'^*(\uu{r}^*)} \right\rceil, \lambda\uu{s}.(g^*_v,f^*_v)(\uu{r}^*,\uu{s})\right).\\
(g^*_{v'},f^*_{v'})(\uu{q}^*) &:= \max((\tilde{g}^*_1,\tilde{f}^*_1)(\uu{q}^*), (\tilde{g}'^*_2,\tilde{f}'^*_2)(\uu{q}^*)).\\
a_{v'}^*(\uu{r}^*) &:= \Psi^*\left( \left\lceil \frac{1}{u'^*(\uu{r}^*)} \right\rceil, \lambda\uu{s}.(g^*_{v'},f^*_{v'})(\uu{r}^*,\uu{s})\right).\\
(g^*_{w'},f^*_{w'})(\uu{q}^*) &:= \max((g^*_0,f^*_0)(\uu{q}^*_{3-4}), (\tilde{g}_3'^*,\tilde{f}_3'^*)(\uu{q}^*,a_{v'}^*(\uu{q}^*))).\\
a_{w'}^*(\uu{r}^*) &:= \Psi^*\left( \left\lceil \frac{1}{u} \right\rceil, \lambda\uu{s}.(g^*_{w'},f^*_{w'})(\uu{r}^*,\uu{s})\right).\\
(g^*_{w},f^*_{w})(\uu{q}^*) &:= \max((g^*_0,f^*_0)(\uu{q}^*), (\tilde{g}_3^*,\tilde{f}_3^*)(\uu{q}^*,a^*_{w'}(\uu{q}^*),a^*_v(\uu{q}^*,a^*_{w'}(\uu{q}^*)))).\\
(\uu{w}^*,k^*) &:= \Psi^*\left( \left\lceil \frac{1}{u} \right\rceil, (g_w^*,f_w^*)\right).\\
(\uu{w}'^*,k'^*) &:= a^*_{w'}(\uu{w}^*,k^*).
\end{align*}

As before, one obtains the final bound of
$$k'^*+f_0^*(\uu{w}'^*,k'^*),$$
which, taking care of the dependencies in the formula just produced, we may denote as
$$\Theta'_{b,\eta,\tau,\theta,\alpha,\gamma}(\eps,g).$$

This, however, is not a rate of metastability in the sense that the notion was defined in the Introduction, but it may be easily converted into one. We remark that (suppressing the indices) $\Theta'$ depends on the $g$ only via $g^M$ and, moreover, the only property of $g^M$ that is used is that it is a majorant for $g$. Therefore, for any $h$ such that for any $n$, $h(n) \leq g(n)$, since then $g^M$ is also a majorant for $h$, $\Theta'(\eps,g)$ is a bound on the (least) $N$ such that $\|x_N - x_{N+h(N)}\| \leq \eps$. The uniformity of the bound having been already taken care of, we may allow the $h$ to depend on the sequence itself, and so
$$h(n):=\argmax_{0 \leq i \leq g(n)} \|x_{n+i} - x_n\|$$
is a valid choice (note that this $h$ can be made constructive by using suitable rational approximations). We may then simply put
$$\Theta_{b,\eta,\tau,\theta,\alpha,\gamma}(\eps,g):=\Theta'_{b,\eta,\tau,\theta,\alpha,\gamma}\left(\frac\eps2,g\right)$$
to obtain our main result, which is expressed as follows.

\begin{theorem}\label{main-metastab}
Let $X$ be a Banach space which is uniformly convex with modulus $\eta$ and uniformly smooth with modulus $\tau$. Let $C \subseteq X$ a closed, convex, nonempty subset. Let $b \in \N^*$ be such that for all $y \in C$, $\|y\| \leq b$ and the diameter of $C$ is bounded by $b$. Let $T: C \to C$ be a pseudocontraction that is uniformly continuous with modulus $\theta$ and $x \in C$. For all $t \in (0,1)$ put $x_t$ to be the unique point in $C$ such that $x_t = tTx_t + (1-t)x$. Let $(t_n) \subseteq (0,1)$, $\alpha : \N \to \N$ and $\gamma : \N \to \N^*$ be such that:
\begin{itemize}
\item for all $n$ and all $m \geq \alpha(n)$, $t_m \geq 1 - \frac1{n+1}$;
\item for all $n$, $t_n \leq 1 - \frac1{\gamma(n)}$.
\end{itemize}
Denote, for all $n$, $x_n:=x_{t_n}$. Then, for all $\eps > 0$ and $g: \N \to \N$ there is an $N \leq \Theta_{b,\eta,\tau,\theta,\alpha,\gamma}(\eps,g)$ such that for all $m,n \in [N,N+g(N)]$,
$\|x_m-x_n\| \leq \eps.$
\end{theorem}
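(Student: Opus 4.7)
The plan is to convert the explicit but data-dependent witness of the metastable Cauchy property produced in Section~\ref{sec:proof-mining} via the functional interpretation of the proof using approximate limsup's into a uniform numerical bound $\Theta$ depending only on $\eps$, $g$, and the majorizing data $b,\eta,\tau,\theta,\alpha,\gamma$. This is carried out by Howard's technique of majorization. Since the witness lives in G\"odel's $T$ -- and, as advertised in the introduction, already in the fragment $T_1$ -- the majorant produced will lie in the same complexity class, so no complexity is lost in the passage.

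The first step is to majorize the base functionals $N^{*},T^{*}$ of Proposition~\ref{exlimsup-nd} (the majorant of $P$ being trivially $b^{2}(k^{*}+1)$) and then $\Psi^{*}$ obtained from Proposition~\ref{claim-i}. The second step is to majorize, in the order they are defined in Subsection~\ref{quantities}, each of $(g_{0},f_{0}), (g_{0}',f_{0}'), u', \iota, (g_{1},f_{1}), (g_{1}',f_{1}'), (g_{2},f_{2}), (g_{2}',f_{2}')$. Two simplifications apply throughout: case distinctions can be replaced by coordinate-wise maxima without evaluating them (since a majorant need only dominate the supremum over both branches, paralleling the role played by the contraction axiom in the usual functional interpretation), and the parameter $\vp$, producing a sequence bounded by $b$, drops out entirely. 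The third step is to chain these majorants through the composite functionals $\Xi$ and $\Phi$ of Proposition~\ref{lambda-term}, yielding a numerical candidate
$$\Theta'(\eps,g):=k'^{*}+f_{0}^{*}(\uu{w}'^{*},k'^{*})$$
which depends on $g$ only via $g^{M}$ and bounds the following ``weak'' metastability statement: for every $h$ with $h\leq g$ pointwise there exists $N\leq \Theta'(\eps,g)$ with $\|x_{N}-x_{N+h(N)}\|\leq \eps$.

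The final step is to upgrade this weak form to the standard interval formulation $\forall m,n\in[N,N+g(N)]\ \|x_{m}-x_{n}\|\leq \eps$. Because $\Theta'$ is uniform in the sequence, one may choose
$$h(n):=\argmax_{0\leq i\leq g(n)}\|x_{n+i}-x_{n}\|$$
(or a suitable rational approximation, to ensure constructivity) and apply $\Theta'$ with error $\eps/2$; this produces an $N$ for which $\|x_{N}-x_{N+i}\|\leq \eps/2$ holds for every $0\leq i\leq g(N)$, and the triangle inequality then gives the full conclusion with error $\eps$. Setting $\Theta(\eps,g):=\Theta'(\eps/2,g)$ closes the proof.

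The main obstacle will be the bookkeeping: the witness is a deeply nested term whose subfunctionals take as arguments other higher-order functionals (for instance, $\Phi$ takes $\Psi$ as a parameter, which itself is universally quantified over two higher-order arguments), so one must verify that every substitution preserves the hereditary majorization relation across all the relevant finite types, including the abstract type $X$ where the relation between naturals and Banach-space elements is governed by the norm. Each individual majorization step is routine, but ensuring that the pieces compose correctly into a sound global bound -- and checking that all the recursions used really do stay inside $T_{1}$, as the paper claims -- is where the care is concentrated.
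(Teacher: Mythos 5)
Your proposal is correct and follows essentially the same route as the paper: majorize $N^*,T^*$ (with $P$ bounded trivially by $b^2(k^*+1)$), then $\Psi^*$, then the functionals of Subsection~\ref{quantities} with case distinctions replaced by maxima and $\vp$ dropping out, chain through $\Xi$ and the logical stage to get $\Theta'=k'^*+f_0^*(\uu{w}'^*,k'^*)$, and finally upgrade from the single-point to the interval formulation via the $\argmax$ choice of $h$ and the $\eps/2$ halving. The only caveat is that the paper first obtains the bound in $T_2$ and only afterwards argues, by inspecting which arguments the recursions actually depend on, that $T_1$ suffices -- a point you correctly flag as requiring care rather than assuming outright.
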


Thus, we have obtained a rate of metastability which is definable in the subsystem of $T$ containing at most type two recursion, which we denote by $T_2$. Note that in order for our bound to be properly said to be defined in that calculus, one must take care that only natural numbers and functionals thereof are used in the definition. The most prominent examples of this sort are that one cannot work with an $\eps > 0$ and must instead use a natural approximation $k$ standing for $\eps:=\frac1{k+1}$, and also that the moduli of convexity, smoothness and continuity must also operate with and return natural numbers having this interpretation. This is straightforward to arrange (see also the metatheorems in 
\cite{Koh08,KohLeu12} which use the respective moduli in this form).

A closer look at the bound shows that type two recursion is only used in the 
definition of $(M^*,U^*)$ needed in defining $\Psi^*$ because of the argument $L^*$ which is in turn used by $\Omega^*$ . The concrete instances of $\Omega^*$ to which $\Psi^*$ is applied in the final stage, however, do not depend on that parameter, as it may be gleamed from a very careful examination.  To see this, 
it is crucial to note 
that the functionals $(U^*_i,M^*_i)$ do not depend on the fifth components 
of $\underline{w}^*,\underline{v}^*$ which play the role of $L^*.$ (That these 
functionals depend neither on the third nor the fourth component of 
$\underline{w}^*,\underline{v}^*$ is not surprising since these can be 
easily majorized in terms of $b$ and $b/\varepsilon$ for the 
respective error $\varepsilon$ which corresponds to the definition of the 
first and second components of the $g^*_i$'s.) Therefore one may replace that recursion by a (simpler) type one recursion\footnote{On the other hand, in the applications of $\Psi$ that were used to obtain the actual realizer, the parameter $L$ played a nondisposable role in the case distinction, but one can also make the remark that $L$ cannot play another role because in the proof of Lemma~\ref{abstract-lemma-two}, the corresponding `$\geq$' statements within the $A$'s were never used.}.
Note also that the primitive recursion of $\WPsi^*$ actually only concerns the components 3-6 (the first two ones have constant values) which are of types $\N$, $\N$, $\N^{\N}$ and $\N$ -- so that this is a recursion of type 
$\N^{\N}$. Actually, using again that the $\Omega^*_{1-4}$'s to which 
this recursion is applied do not depend on the type $\N^{\N}$ component 
$\tilde{\Psi}^*_5$ of $\tilde{\Psi}^*$, one can see that in the case at hand 
it reduces to a recursion of type $\N$. We also remark, that in the situation 
at hand, the functional $M^*$ (and hence $\overline{M}^*$) actually is 
constantly $0$ since the respective $\Omega^*_5$ functionals, namely the fifth 
components of the $g^*_i$'s, are $0$. 

\begin{corollary}
The bound $\Theta_{b,\eta,\tau,\theta,\alpha,\gamma}(1/(k+1),g),$ providing a rate of metastability for the resolvents of continuous pseudocontractive operators in Banach spaces which are uniformly convex and uniformly smooth, is definable in $T_1$ as a functional in the parameters $b$, $\eta$, $\tau$, $\theta$, $\alpha$, $\gamma$, $k$, $g$.
\end{corollary}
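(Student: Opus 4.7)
The plan is to trace through the construction of $\Theta$ given in Section~\ref{sec:metastab} and verify that each place where a primitive recursion is invoked can in fact be carried out at type $\N$, so that only $R_1$ is needed. The two potentially problematic spots are: (a) the primitive recursion defining $(M^*,U^*)(\Omega^*,\cdot)$ in the majorized version of Proposition~\ref{claim-i}, whose step functional has the type-$\N^\N$ parameter $L^*$ in its domain, and (b) the primitive recursion defining $\WPsi^*(l^*,\Omega^*,\cdot)$, which nominally produces a $6$-tuple one of whose components lives in $\N^\N$.

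For (a), I would first observe that in every concrete $\Omega^*$ to which $\Psi^*$ is applied in the final stage---namely the pairs $(g^*_0,f^*_0)$, $(g^*_1,f^*_1)$, $(g^*_2,f^*_2)$ and the derived $(g^*_v,f^*_v)$, $(g^*_{v'},f^*_{v'})$, $(g^*_w,f^*_w)$, $(g^*_{w'},f^*_{w'})$---the fifth output component is identically $0$. Consequently the majorant $M^*$ collapses to the constant $0$ functional, and so do $JUM^*k^*$ and $T^*U^*M^*k^*$. The crucial structural point is then that the functionals $(U_1^*,M_1^*)$ and $(U_2^*,M_2^*)$ depend only on the $w_1^*,w_2^*,v_1^*,v_2^*$ components of their tuple arguments---inspection of their definitions shows the fifth (type-$\N^\N$) components $w_5^*,v_5^*$ are never read. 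Hence the $\Omega^*_{1-4}$ that feed into $\WPsi^*$ never need the $L^*$ parameter, and the recursion (a), restricted to the arguments it actually receives, becomes a recursion over a step function whose only varying input is of type $\N$.

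For (b), the same observation resolves the issue: since $\Omega^*_{1-4}$ in the concrete applications does not depend on the fifth component of $\WPsi^*$, that component never has to be iterated, and each of the remaining components it produces is of type $\N$ (or is already fixed as a constant, like the second one equal to $4b^2(l^*+1)$ and the third one equal to $b$). The recursion on $\WPsi^*$ therefore reduces to a primitive recursion whose active output type is a product of $\N$'s, i.e.\ to recursion of type $\N$. Together with the reduction of (a), this shows that the entire construction of $\Theta_{b,\eta,\tau,\theta,\alpha,\gamma}(1/(k+1),g)$ is built from $\eta$, $\tau$, $\theta$, $\alpha$, $\gamma$, $b$, $k$, $g$ by $\lambda$-abstraction, application, arithmetic operations (with case distinctions absorbed into maxima/minima as explained in Section~\ref{sec:metastab}), and a single primitive recursion at type $\N$.

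The main obstacle is not mathematical but bookkeeping: one has to verify carefully, going through each $(g^*_i,f^*_i)$ and each $(U^*_j,M^*_j)$ as well as through the composite pairs $(g^*_w,f^*_w)$, $(g^*_{w'},f^*_{w'})$, etc., that no occurrence of $w_5^*$ or $v_5^*$ (the would-be $L^*$'s) ever leaks into the quantities that are fed back into $\Psi^*$. This is essentially a tedious check of the definitions, but once it is done the reduction to $T_1$ follows by the standard closure properties of $T_1$ under $\lambda$-definability and $R_0,R_1$-recursion (cf.\ Parsons \cite{Par72}), giving the corollary.
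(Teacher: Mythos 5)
Your proposal follows essentially the same route as the paper: the decisive observation in both is that the concrete $\Omega^*$'s fed to $\Psi^*$ never read the type-$\N^\N$ components $w_5^*,v_5^*$ (since the $(U_i^*,M_i^*)$ only use the first two components of $\uu{w}^*,\uu{v}^*$, and the fifth output components of the $g_i^*$'s are $0$), which demotes the $(M^*,U^*)$-recursion from $R_2$ to $R_1$ and the $\WPsi^*$-recursion to type $\N$. The only imprecision is your closing phrase ``a single primitive recursion at type $\N$'': several recursions remain (e.g.\ those defining $W^*$, $J^*$ and the $L^*$-free $(M^*,U^*)$), and they still produce type-$1$ objects, so they are genuine $R_1$-recursions (which is exactly why the paper claims $T_1$ and leaves $T_0$ open in Remark~\ref{complexity}); this does not affect the $T_1$ conclusion, which you correctly close off via $R_0,R_1$-closure.
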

\begin{remark}\label{complexity} 
A detailed analysis of the structure of our rate of 
metastability might actually reveal -- in line with Lemma 4 in 
{\rm \cite{Par70}} --
that the remaining type-1 recursions (to define $N^*,T^*$ and $U^*$) 
are applied to type-2 functionals which are so simple (w.r.t. their 
dependence on the function argument) 
that our bound could be defined already in $T_0.$ We have to leave 
this for future research.
\end{remark}

\begin{remark}
In the special case where the mapping is nonexpansive and has a fixed point, 
we may trivially remove the boundedness condition as follows: 
let $G \subseteq X$ a closed, convex, nonempty subset. Let $U: G \to G$ be nonexpansive with a fixed point $p$ and $x \in G$. Let $b \in \N^*$ be such that $\|x - p\| \leq b/2$ and $\|p\| \leq b/2$. For all $t \in (0,1)$
put $x_t$ to be the unique point in $G$ such that 
$x_t = tUx_t + (1-t)x$. Let $(t_n)$, $\alpha$, $\gamma$ be as before. Denote, for all $n$, $x_n:=x_{t_n}$.
Then, for all $\eps > 0$ and $g: \N \to \N$ there is an $N \leq \Theta_{b,\eta,\tau,\text{\rm id},\alpha,\gamma}(\eps,g)$ such that for all $m,n \in [N,N+g(N)]$,
$\|x_m-x_n\| \leq \eps.$ To see this,
put $C$ to be the intersection of $G$ with the closed ball centred on $p$ with radius $b/2$. Clearly $C$ is closed, convex and nonempty. Set $T$ to be $U$ restricted to $C$, whose image is by nonexpansiveness also in $C$. Clearly, the diameter of $C$ is bounded by $b$, all elements of $C$ are bounded by $b$ and $x \in C$, so we may apply Theorem~\ref{main-metastab}.
\end{remark}

We now argue that the quantitative metastability of the sequence $(x_{t_n})$ is indeed a finitization in the sense of Tao of the following theorem (which is a somewhat restricted form of the main result in \cite{Rei80}).

\begin{theorem}[\cite{Rei80}]\label{main-thm-sunny}
Let $X$ be a Banach space which is uniformly convex and uniformly smooth, $C \subseteq X$ a closed, convex, bounded, nonempty subset, $T: C \to C$ be a uniformly continuous pseudocontraction and $x \in C$. For all $t \in (0,1)$ put $x_t$ to be the unique point in $C$ such that $x_t = tTx_t + (1-t)x$. Then for all $(t_n) \subseteq (0,1)$ such that $\lim\limits_{n \to \infty} t_n = 1$ we have that $(x_{t_n})$ converges to a fixed point of $T$, which we denote by $Qx$. In addition, the map $Q: C \to Fix(T)$ thus defined is a sunny nonexpansive retraction (and therefore the unique such one).
\end{theorem}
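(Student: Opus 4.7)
The plan is to derive Theorem~\ref{main-thm-sunny} from the metastability bound already established in Theorem~\ref{main-metastab} and then apply standard arguments about sunny nonexpansive retractions. First I would observe that quantitative metastability uniformly in $g$ is (noneffectively) equivalent to the Cauchy property, so Theorem~\ref{main-metastab} yields that $(x_{t_n})$ is Cauchy; since $X$ is complete and $C$ is closed, the sequence converges to some $Qx\in C$. Independence of $Qx$ from the choice of $(t_n)\to 1$ follows by the standard interleaving trick: the interleaved sequence $(s_n)$ still converges to $1$, so $(x_{s_n})$ is Cauchy by the same application of Theorem~\ref{main-metastab}, which forces the two original subsequences to share a limit.

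Next I would show that $Qx\in Fix(T)$ and that $Q$ is a retraction onto $Fix(T)$. For the first, the identity $x_t-Tx_t=(1-t)(x-Tx_t)$ gives $\|x_t-Tx_t\|\le (1-t)b\to 0$, so by continuity of $T$ the limit $Qx$ is a fixed point. For the second, if $x\in Fix(T)$ then $x$ itself solves the fixed point equation $x_t=tTx_t+(1-t)x$, so by uniqueness $x_t=x$ for all $t$ and hence $Qx=x$.

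Nonexpansiveness is a direct consequence of the pseudocontractive property applied to the pair $(x_t,y_t)$. Starting from the defining inequality \eqref{def-psc} for $T$, writing $Tx_t-Ty_t=\tfrac{1}{t}(x_t-y_t)-\tfrac{1-t}{t}(x-y)$ via the fixed-point equations and choosing the parameter $s:=t/(1-t)$, the right-hand side collapses to $\|x-y\|$, so $\|x_t-y_t\|\le\|x-y\|$ for every $t\in(0,1)$; letting $t\to 1$ gives $\|Qx-Qy\|\le\|x-y\|$.

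The main delicate step is sunniness, which I would verify through the variational characterization of Proposition~\ref{char-sunny}: for every $p\in Fix(T)$, show $\langle x-Qx,j(p-Qx)\rangle\le 0$. For a fixed $p\in Fix(T)$, a computation analogous to the one in the first proof of the Claim in Section~\ref{section-limsup}, using pseudocontractivity of $T$ at the pair $(x_t,p)$, yields
\[
\|x_t-p\|^2\le \langle x-p,j(x_t-p)\rangle,\qquad \text{equivalently } \langle x_t-x,j(x_t-p)\rangle\le 0,
\]
for every $t\in(0,1)$. Since $X$ is uniformly smooth, the normalized duality map $j$ is norm-to-norm uniformly continuous on bounded sets (the property underlying Proposition~2.5), so $j(x_t-p)\to j(Qx-p)$ as $x_t\to Qx$. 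Passing to the limit yields $\langle Qx-x,j(Qx-p)\rangle\le 0$, which rearranges to the desired variational inequality; the uniqueness assertion then follows from the uniqueness proposition established earlier in Section~\ref{sec:prelim}. The hard part of the argument is thus concentrated here in this limit passage, but it is entirely routine once the Cauchy property (and hence existence of $Qx$) has been supplied by the metastability theorem.
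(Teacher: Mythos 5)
Your proof follows essentially the same route as the paper's: Cauchyness from metastability, the fixed-point property from $\|x_t-Tx_t\|\le(1-t)b$, the retraction property by uniqueness of $x_t$, and the variational inequality of Proposition~\ref{char-sunny} obtained by passing to the limit in $\langle x_t-x,j(x_t-p)\rangle\le 0$ using the uniform norm-to-norm continuity of $j$ on bounded sets. The only divergence is your separate direct proof of nonexpansiveness via the pseudocontraction inequality at the pair $(x_t,y_t)$ --- this step is redundant, since Proposition~\ref{char-sunny} already yields nonexpansiveness together with sunniness from the variational inequality, and incidentally the parameter that makes the coefficient of $x_t-y_t$ vanish there is $s=(1-t)/t$ rather than $t/(1-t)$.
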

For this we now show that the metastability of $(x_{t_n})$ implies in an 
elementary way the above theorem: using just logic (and quantifier-free choice 
from $\N$ to $\N$) the metastability of $(x_{t_n})$ implies that  $(x_{t_n})$ is Cauchy, and therefore, since $X$ is complete and $C$ is closed, it is convergent 
(in the context of reverse mathematics, the latter fact uses arithmetical 
comprehension -- in fact a single use of $\Pi^0_1$-comprehension -- to get a 
fast converging subsequence as required to obtain the actual limit). 
It is clear that the limit does not depend on the $(t_n)$, so we can unambiguously dub it $Qx$. For the rest of the proof, we fix a $(t_n)$ and denote, for all $n$, $x_n:=x_{t_n}$. That $Qx$ is a fixed point follows from the continuity of $T$ and the fact (proven already in the beginning off Section~\ref{section-limsup}) that
$$\lim_{n \to \infty} \|x_n - Tx_n\| = 0,$$ whose trivial proof we recall here:
$$\|x_n - Tx_n\| = \|t_nTx_n + (1-t_n)x - Tx_n\| = \|(1-t_n)(x-Tx_n)\| \leq (1-t_n)b.$$ 
If $x$ is already a fixed point, then clearly for all $n$, $x_n=x$ and therefore $Qx=x$. We have thus shown that $Q$ is a retraction. To show that $Q$ is sunny and nonexpansive, we seek to apply Proposition~\ref{char-sunny}. Let $p \in Fix(T)$. Then
\begin{align*}
x_n - p &= t_nTx_n+(1-t_n)x-p\\
&= t_n(Tx_n-p) + (1-t_n)(x-p) \\
&= t_n(Tx_n-Tp) + (1-t_n)(x-p).
\end{align*}
Now we reuse parts of the argument from Claim 3 in the last proof from Section~\ref{section-limsup}. We have that
\begin{align*}
\|x_n-p\|^2 &= t_n \langle Tx_n - Tp, j(x_n-p) \rangle + (1-t_n)\langle x - p, j(x_n - p) \rangle \\
&\leq t_n \|x_n-p\|^2 + (1-t_n)\langle x - p, j(x_n - p)\rangle,
\end{align*}
so
$$\|x_n-p\|^2 \leq \langle x - p, j(x_n - p)\rangle$$
and therefore, using that $j$ is homogeneous,
$$\langle x - x_n, j(p-x_n) \rangle \leq 0.$$
By passing to the limit, using the continuity of $j$, we get that
$$\langle x - Qx, j(p-Qx) \rangle \leq 0,$$
which is what we needed to show.
\begin{remark}[for logicians; we use the terminology from \cite{Koh08}]\label{rem.6.5} \rm
As mentioned already, the proof of the metastability of $(x_{t_n})$ in 
Section \ref{section-approx} shows that it can be carried out in the formal system WE-PA$^{\omega}[X,\|\cdot\|,\eta,J_X,\omega_X,C].$ Note that the noneffective 
definition of the function $s_{p,g}$ can easily be avoided by using suitable 
rational approximations of $\| x_{n+g(n)}-p\|$ and $\| x_n-p\|.$ 
From this, the proof above of the convergence of $(x_{t_n})$
only requires classical logic, a fixed (in the parameters $T$, $x$, $(t_n)$ needed 
to define $(\| x_{t_n}\|)$) sequence  QF-AC$^{0,0}_-$ of 
instances of QF-AC$^{0,0}$ 
(in the terminology of reverse mathematics: $\Delta^0_1$-CA) 
and (a single use of) 
$\Pi^0_1$-CA. Both QF-AC$^{0,0}$ and $\Pi^0_1$-CA can (with classical logic) 
be combined into $\Pi^0_1$-AC$^{0,0}.$ 

Let us now specify 
the amount of classical logic needed when using the intuitionistically unproblematic principle AC$^{0,0}.$ By applying negative translation to the 
above proof of the Cauchyness of $(x_{t_n})$ we obtain in 
WE-HA$^{\omega}[X,\|\cdot\|,\eta,J_X,\omega_X,C]+$QF-AC$^{0,0}_-+$M$^0_-$ 
$$\forall k\in\N\neg\neg \exists n\in\N\,\forall m,\tilde{m}\ge n\ (\| x_{t_m}-x_{t_{\tilde{m}}}\|\le 1/(k+1))$$ (here M$^0$ denotes the Markov principle for 
numbers).
Hence,
$$\Sigma^0_2{\rm \mbox-DNE}: \ \neg\neg \exists n\in\N\,\forall m\in\N \,A_{qf}(n,m)\to 
 \exists n\in\N\,\forall m\in\N \,A_{qf}(n,m) \ \ \ (A_{qf} \ \mbox{quantifier-free})$$ (which also covers M$^0$) 
suffices. Using the closure of  WE-HA$^{\omega}[X,\|\cdot\|,\eta,J_X,\omega_X,C]+$AC$^{0,0}+\Sigma^0_1$-LEM 
under the rule of $\Sigma^0_2$-DNE (proven similarly as in \cite[Section 3]{KohSaf14}) one can conclude that 
even (a fixed -- in the parameters mentioned -- sequence $\Sigma^0_1$-LEM$_-$ 
of instances of) 
$$\Sigma^0_1\mbox{-LEM}: \ \exists n\in \N\,A_{qf}(n) \vee \neg \exists n\in\N\,A_{qf}(n)$$ 
suffices (when added to  WE-HA$^{\omega}[X,\|\cdot\|,\eta,J_X,\omega_X,C]+$AC$^{0,0}_-$) to prove the Cauchyness and -- in turn -- the  
convergence of $(x_{t_n})$ and the variational inequality (characterizing sunny nonexpansive retractions) from 
Proposition~\ref{char-sunny}.
\end{remark}

\section{Applications}\label{sec:apps}

The convergence of the resolvents, which form an implicit iteration schema, plays a role in proving the strong convergence of some explicit iteration schemas designed to compute fixed points of some nonlinear operators.

One such schema is the Halpern iteration \cite{Hal67}. If $T: C \to C$ is a mapping, $x$, $u \in C$ and $(\lambda_n) \subseteq (0,1)$, the Halpern iteration corresponding to this data is the sequence $(x_n)$, defined by:
$$x_0:=x, \quad x_{n+1}:=\lambda_{n+1} u+(1-\lambda_{n+1})Tx_n.$$

The convergence of this sequence for nonexpansive self-mappings of closed convex bounded nonempty subsets $C$ of uniformly smooth Banach spaces was obtained by Shioji and Takahashi \cite{ShiTak97} under Wittmann's \cite{Witt92} conditions on $(\lambda_n)$ and analyzed from the point of view of proof mining by the first author and Leu\c stean \cite{KohLeu12}, modulo the resolvent convergence. We are now in a position to complete this analysis under the additional hypothesis that $X$ is uniformly convex.

\begin{theorem}[{cf. \cite[Theorem 3.2]{KohLeu12}}]
Let $X$ be a Banach space which is uniformly convex with modulus $\eta$ and uniformly smooth with modulus $\tau$. Let $C \subseteq X$ a closed, convex, nonempty subset. Let $b \in \N^*$ be such that for all $x \in C$, $\|x\| \leq b$ and the diameter of $C$ is bounded by $b$. Let $T: C \to C$ be a nonexpansive mapping and $x$, $u \in C$. Put $\theta:=\id_\N$ and put $\alpha$ and $\gamma$ to be the functions defined, for all $n$, by $\alpha(n):=n$ and $\gamma(n):=n+1$. Let $(\lambda_n) \subseteq (0,1)$ be such that:
\begin{itemize}
\item $\sum_{n=0}^\infty \lambda_n = \infty$ with rate of divergence $\beta_1$;
\item $\lim_{n \to \infty} \lambda_n = 0$ with rate of convergence $\beta_2$;
\item $\sum_{n=0}^\infty |\lambda_{n+1}-\lambda_n| < \infty$ with Cauchy modulus $\beta_3$.
\end{itemize}
Denote by $(x_n)$ the Halpern iteration corresponding to this data. Let $\Sigma$ be defined by {\rm \cite[Theorem 3.2]{KohLeu12}}. Then, for all $\eps \in (0,2)$ and $g: \N \to \N$ there is an $N \leq \Sigma(\eps,\omega_\tau,g,b,\Theta_{b,\eta,\tau,\theta,\alpha,\gamma},\beta_1,\beta_2,\beta_3)$ such that for all $m,n \in [N,N+g(N)]$,
$\|x_m-x_n\| \leq \eps.$
\end{theorem}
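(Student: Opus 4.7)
The plan is to reduce this to a direct application of the existing quantitative analysis of the Halpern iteration carried out in \cite[Theorem 3.2]{KohLeu12}. That theorem produces the functional $\Sigma$ which, from a rate of metastability for the resolvent path $(x_{t_n})$ (together with the other auxiliary data $b$, $\omega_\tau$, $\beta_1$, $\beta_2$, $\beta_3$), constructs a rate of metastability for the Halpern iteration itself. The essential ingredient missing at the time \cite{KohLeu12} was written was a uniform, effective rate for the resolvents outside Hilbert space; this gap is now closed by Theorem~\ref{main-metastab} of the present paper.

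First I would verify that all hypotheses needed to invoke Theorem~\ref{main-metastab} are available in the present setting. Any nonexpansive map is a pseudocontraction (as established in the preliminaries) and is uniformly continuous with modulus of continuity $\theta = \id_\N$, since from $\|x-y\| \le \eps$ one immediately gets $\|Tx-Ty\| \le \eps$. For the canonical sequence $t_n := 1 - 1/(n+1)$ used in the formulation, the choices $\alpha(n) := n$ and $\gamma(n) := n+1$ trivially satisfy the two conditions imposed on these moduli in Theorem~\ref{main-metastab}. Hence Theorem~\ref{main-metastab} applies and yields that $\Theta_{b,\eta,\tau,\theta,\alpha,\gamma}$ is a rate of metastability for the resolvent sequence $(x_{t_n})$ associated to $u$ as the anchor point.

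Second, I would plug this $\Theta_{b,\eta,\tau,\theta,\alpha,\gamma}$ into the functional $\Sigma$ from \cite[Theorem 3.2]{KohLeu12} in the slot reserved for an assumed resolvent rate, keeping the remaining parameters $b$, $\omega_\tau$, $\beta_1$, $\beta_2$, $\beta_3$ as given. The output $\Sigma(\eps,\omega_\tau,g,b,\Theta_{b,\eta,\tau,\theta,\alpha,\gamma},\beta_1,\beta_2,\beta_3)$ is then, by the very statement of \cite[Theorem 3.2]{KohLeu12}, a bound $N$ within which the required metastable Cauchy condition $\|x_m-x_n\|\le\eps$ holds on $[N,N+g(N)]$ for the Halpern iterates.

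The only non-routine point is ensuring the notational match between the resolvent rate expected by $\Sigma$ and the one delivered by Theorem~\ref{main-metastab}: \cite{KohLeu12} deals specifically with nonexpansive mappings and with the canonical choice $t_n = 1 - 1/(n+1)$, whereas our $\Theta$ is formulated for continuous pseudocontractions and for general sequences $(t_n)$ converging to $1$ controlled by $\alpha, \gamma$. Specializing $\Theta$ via $\theta := \id_\N$, $\alpha(n) := n$, $\gamma(n) := n+1$ produces exactly the object required by $\Sigma$, after which the conclusion is immediate. I expect no further technical obstacle, as the heavy lifting is done entirely by Theorem~\ref{main-metastab} on our side and by \cite[Theorem 3.2]{KohLeu12} on the Halpern side.
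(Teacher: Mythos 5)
Your proposal is correct and matches the paper's intent exactly: the paper states this theorem without proof precisely because it is the direct combination of Theorem~\ref{main-metastab} (instantiated with $\theta=\id_\N$, $\alpha(n)=n$, $\gamma(n)=n+1$ for $t_n=1-\frac{1}{n+1}$ and anchor $u$) with the functional $\Sigma$ of \cite[Theorem 3.2]{KohLeu12}, which is exactly what you do. No gaps.
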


An explicit iteration schema that is in addition amenable to pseudocontractions is the Bruck iteration \cite{Bru74A}. If $T: C \to C$ is a mapping, $x \in C$ and $(\lambda_n)$, $(\theta_n) \subseteq (0,1)$ such that for all $n$, $\lambda_n(1+\theta_n) \leq 1$, the Bruck iteration corresponding to this data is the sequence $(x_n)$, defined by:
$$x_1:=x, \quad x_{n+1}:=(1-\lambda_n) x_n + \lambda_n Tx_n - \lambda_n \theta_n (x_n -x).$$

The convergence of this sequence in some general framework containing the case of Lipschitzian pseudocontractive self-mappings of closed convex bounded nonempty subsets $C$ of uniformly convex and smooth Banach spaces was obtained by Chidume and Zegeye \cite{Chi} under some conditions on $(\lambda_n)$ and $(\theta_n)$ and then analyzed from the point of view of proof mining by K\"ornlein and the first author \cite{KorKoh14}, again modulo the resolvent convergence. We now complete their analysis.

\begin{theorem}[{cf. \cite[Corollary 2.10]{KorKoh14}}]
Let $X$ be a Banach space which is uniformly convex with modulus $\eta$ and uniformly smooth with modulus $\tau$. Let $C \subseteq X$ a closed, convex, nonempty subset. Let $b \in \N^*$ be such that for all $x \in C$, $\|x\| \leq b$ and the diameter of $C$ is bounded by $b$.  Let $T: C \to C$ be a Lipschitzian pseudocontraction of constant $L$ and $x \in C$. Let $(\lambda_n)$, $(\theta_n) \subseteq (0,1)$ satisfy the Chidume-Zegeye conditions.
Denote by $(x_n)$ the Bruck iteration corresponding to this data. Let $\chi$, $h$, $g'$ and $\Psi$ be defined as in {\rm \cite{KorKoh14}}. Put $\theta$ to be multiplication by $L$ and for all $n$, $\gamma(n):=h(n)+1$. Then, for all $\eps \in (0,2)$ and $g: \N \to \N$ there is an $N \leq \chi^M\left(\Theta_{b,\eta,\tau,\theta,\chi,\gamma}\left(\frac\eps2,g'\right)\right) + \Psi(\eps) + 1$ such that for all $m,n \in [N,N+g(N)]$,
$\|x_m-x_n\| \leq \eps$ and for all $\l \geq N$, $\|x_l-Tx_l\|\leq \eps$.
\end{theorem}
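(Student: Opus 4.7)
The plan is to combine Theorem~\ref{main-metastab} with the conditional quantitative analysis of the Bruck iteration already performed in \cite{KorKoh14}. That paper (specifically Corollary~2.10 there, together with its supporting material) establishes that, under the hypotheses of our theorem and \emph{given} any rate of metastability $\Xi$ for the resolvent sequence $(x_{t_n})$ with the canonical choice $t_n := 1-1/(h(n)+1)$, a combined rate for the metastability of the Bruck iteration $(x_n)$ and for the approximate-fixed-point property $\|x_l-Tx_l\|\leq \eps$ (for $l\geq N$) is furnished by $N \leq \chi^M(\Xi(\eps/2, g')) + \Psi(\eps) + 1$. What was missing until now was a concrete effective $\Xi$ valid in the full setting of uniformly convex and uniformly smooth Banach spaces, and Theorem~\ref{main-metastab} fills precisely this gap.

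First I would verify that Theorem~\ref{main-metastab} applies with parameters matched to those of \cite{KorKoh14}. With $t_n := 1 - 1/(h(n)+1)$, the choice $\gamma(n) := h(n)+1$ ensures $t_n \leq 1 - 1/\gamma(n)$; and by its very construction in \cite{KorKoh14}, the function $\chi$ has the property that $m \geq \chi(n)$ implies $t_m \geq 1-1/(n+1)$, so $\chi$ plays the role of the $\alpha$ demanded by Theorem~\ref{main-metastab}. Next, an $L$-Lipschitz map is uniformly continuous with a modulus that, under the convention used by \cite{KorKoh14}, is the ``multiplication by $L$'' function $\theta$ quoted in the statement. Finally, every Lipschitzian pseudocontraction is in particular a uniformly continuous pseudocontraction, $C$ is closed, convex, and bounded by $b$, and $X$ is uniformly convex (modulus $\eta$) and uniformly smooth (modulus $\tau$). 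Thus all hypotheses of Theorem~\ref{main-metastab} are satisfied, and $\Xi := \Theta_{b,\eta,\tau,\theta,\chi,\gamma}$ is a legitimate rate of metastability for the resolvent path.

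Substituting this $\Xi$ into the conditional bound of \cite[Corollary~2.10]{KorKoh14} yields exactly the advertised
\[
N \leq \chi^M\bigl(\Theta_{b,\eta,\tau,\theta,\chi,\gamma}(\eps/2, g')\bigr) + \Psi(\eps) + 1,
\]
simultaneously with the approximate-fixed-point clause for $l \geq N$, as both conclusions are produced by the same $N$ in \cite{KorKoh14}. The main obstacle is purely bookkeeping: verifying that the parameter conventions of the two quantitative analyses (notably $\alpha \leftrightarrow \chi$ for the rate at which $t_n\to 1$, $\gamma(n) \leftrightarrow h(n)+1$ for how far $t_n$ stays from $1$, and the Lipschitz constant $L$ encoded as the modulus $\theta$) match correctly. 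The substantive work is already carried out, on the one hand by Theorem~\ref{main-metastab} (Sections~\ref{section-limsup}--\ref{sec:metastab}) which delivers $\Theta$, and on the other hand by the reduction of the Bruck iteration to the resolvent path performed in \cite{KorKoh14}.
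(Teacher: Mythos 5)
Your proposal is correct and follows exactly the paper's route: instantiate the conditional bound of \cite[Corollary 2.10]{KorKoh14} with $\Xi := \Theta_{b,\eta,\tau,\theta,\chi,\gamma}$ from Theorem~\ref{main-metastab}, the only point requiring verification being that $\chi$ and $\gamma$ are the moduli demanded for the auxiliary resolvent sequence. One small correction of bookkeeping: the auxiliary sequence in \cite{KorKoh14} is $t_n = 1/(1+\theta_n)$ rather than exactly $1-1/(h(n)+1)$; the function $h$ merely furnishes the bound $t_n \leq 1-1/(h(n)+1) = 1-1/\gamma(n)$, and that $\chi$ serves as the $\alpha$-modulus is checked in the first lines of the proof of \cite[Theorem 2.8]{KorKoh14}.
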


\begin{proof}
The only issue that needs additional justification is that $\chi$ and $\gamma$ are the required moduli for the auxiliary sequence $t_n = 1/(1+\theta_n)$ used in the original relative metastability proof. This is shown in the first lines of the proof of \cite[Theorem 2.8]{KorKoh14}.
\end{proof}

Another application of the rate of metastability extracted in this paper is given in \cite{Koh19}, where it is used to construct a rate 
of metastability for the strongly convergent Halpern-type Proximal Point Algorithm in uniformly convex and uniformly smooth Banach spaces 
from \cite{Aoyama}.
 
\section{Acknowledgements}

The authors have been supported by the German Science Foundation (DFG Project KO 1737/6-1).

\end{document}